\newcommand{\bB}{\mathbb{B}}
\newcommand{\bC}{\mathbb{C}}
\newcommand{\bM}{\mathbb{M}}
\newcommand{\bQ}{\mathbb{Q}}
\newcommand{\bR}{\mathbb{R}}
\newcommand{\bZ}{\mathbb{Z}}
\newcommand{\unit}{\mathbf{1}}
\newcommand{\cC}{\mathcal{C}}
\newcommand{\cH}{\mathcal{H}}
\newcommand{\cM}{\mathcal{M}}
\newcommand{\fg}{\mathfrak{g}}
\newcommand{\fh}{\mathfrak{h}}
\newcommand{\fH}{\mathfrak{H}}
\newcommand{\fm}{\mathfrak{m}}
\newcommand{\ad}{\operatorname{ad}}
\newcommand{\Aut}{\operatorname{Aut}}
\newcommand{\End}{\operatorname{End}}
\newcommand{\Hom}{\operatorname{Hom}}
\newcommand{\Ker}{\operatorname{Ker}}
\newcommand{\Rep}{\operatorname{Rep}}
\newcommand{\Tr}{\operatorname{Tr}}
\newtheorem{thm}{Theorem}[subsection]
\newtheorem*{conj}{Conjecture}
\newtheorem{prop}[thm]{Proposition}
\newtheorem{lem}[thm]{Lemma}
\newtheorem{cor}[thm]{Corollary}
\theoremstyle{definition}
\newtheorem{defn}[thm]{Definition}
\theoremstyle{remark}
\newtheorem{rem}[thm]{Remark}
\begin{document}

\title{Generalized Moonshine IV: Monstrous Lie algebras}
\author{Scott Huai-Lei Carnahan}
\address{Division of Mathematics, University of Tsukuba}
\email{carnahan@math.tsukuba.ac.jp}

\begin{abstract}
For each element of the Fischer-Griess Monster sporadic simple group, we construct an infinite dimensional Lie algebra equipped with a projective action of the centralizer of that element.  Our construction is given by a string-theoretic ``add a spacetime torus and quantize'' functor applied to an abelian intertwining algebra that is formed from a family of twisted modules of the Monster vertex operator algebra.  We prove that for all Fricke elements in the Monster, the characters of centralizers acting on the corresponding irreducible twisted modules are Hauptmoduln.  From these results, we resolve Norton's Generalized Moonshine Conjecture.
\end{abstract}

\maketitle

\tableofcontents

\section{Introduction}

For each element $g$ of the Monster, we construct an infinite dimensional Lie algebra equipped with a canonical projective action of the centralizer of $g$.  For those $g$ satisfying the Fricke property, we show that the Lie algebras are Borcherds-Kac-Moody with a real simple root, and we use the resulting twisted denominator identities to prove Norton's Generalized Moonshine Conjecture for the class of irreducible twisted $V^\natural$-modules.

\subsection{Generalized Moonshine}

We give a brief reminder of monstrous moonshine before describing its generalization.  In the late 1970s, McKay observed that the $q^1$-term in the $q$-expansion of the normalized modular function $J = q^{-1} + 196884q + 21493760q^2 + \cdots$ is one more than the dimension of the smallest nontrivial complex representation of the Fischer-Griess Monster simple group $\bM$, which is a sporadic simple group of order about $8 \times 10^{53}$.  This observation suggested a relationship between the $J$-function and representations of the Monster that was further reinforced by additional computations by McKay and Thompson. Conway and Norton then did a massive computation with characters, producing evidence that there is an infinite dimensional graded representation $V = \bigoplus_{n=0}^\infty V_n$ of the Monster, such that the graded trace $T_g(\tau) = \sum_{n=0}^\infty \Tr(g|V_n) q^{n-1}$ of any element is the Fourier expansion (under $q = e^{2\pi i \tau}$) of a holomorphic function on the complex upper half-plane $\fH$ that is moreover a Hauptmodul \cite{CN79}.  That is, for each $g \in \bM$, there is a group $\Gamma_g \subset SL_2(\bR)$ under which $T_g(\tau)$ is invariant, such that the quotient curve $X_g = \fH/\Gamma_g$ is genus zero, and $T_g$ generates the field of meromorphic functions on $X_g$.  A candidate representation $V^\natural$ was constructed by Frenkel, Lepowsky, and Meurman, who showed that it is a vertex operator algebra whose automorphism group is precisely $\bM$, and its graded dimension $T_1$ is $J$ \cite{FLM88}.  Borcherds then proved Conway and Norton's conjecture for $V^\natural$ \cite{B92}.

In the Conway-Norton paper, the authors suggest that there may be moonshine-like phenomena for groups other than $\bM$, and shortly afterward, Queen did some computations that yielded concrete evidence that one can obtain Hauptmoduln from characters of a variety of large finite groups \cite{Q81}.  In particular, the low-order coefficients of many such modular functions can be assembled from representations of sporadic simple groups other than the Monster.

For illumination, we consider a level 2 example.  The irreducible representations of the Baby Monster sporadic simple group $\bB$ have dimension $1, 4371, 96255, \ldots$, while the power series $q^{-1} + 4372q + 96256q^2 + \cdots$ is the $q$-expansion of $\frac{\Delta(\tau)}{\Delta(2\tau)} + \frac{\Delta(2\tau)}{\Delta(\tau)} + 24$, which is a Hauptmodul for $\Gamma_0(2)^+$.  It is difficult to resist the temptation to decompose the coefficients as small combinations of dimensions of irreducible representations: $4372 = 4371 + 1$ and $96256 = 96255 + 1$.  Furthermore, we encounter promising evidence in the form of the character of an element in conjugacy class 2A: the first term yields $q^{-1} - 492q + O(q^2)$, and there is a well-known modular function $\sqrt{j(2\tau)-1728} = q^{-1} - 492q - 22590q^3 - \cdots$ that is a Hauptmodul for $\Gamma_0(2|2)$.  However, a complication arises: the $q^2$ term in our character does not vanish, since the trace of 2A on $\underline{96255}$ is $4863$, not $-1$.  Despite the initial promising evidence, we find that the Baby Monster group does not yield moonshine-like behavior.  However, there is a central extension $2.\bB$, with an irreducible representation of dimension 96256, where the trace of 2A is zero.  By replacing the Baby Monster with its central extension, we eliminate this obstruction to assembling the characters into modular functions.  Similar behavior appears when considering the sporadic group $Fi_{24}$, where one needs to pass to a triple cover to get a representation of dimension 783, yielding a level 3 modular function $q^{-1} + 783q + \cdots$.  What these examples have in common is that both $2.\bB$ and $3.Fi_{24}$ are centralizers of elements of the Monster.

Norton employed these computations and many of his own to formulate the following conjecture:

\begin{conj} (Generalized Moonshine \cite{N87}, revised in \cite{N01})
There exists a rule that assigns to each element $g$ of the Monster simple group $\bM$ a graded projective representation $V(g) = \bigoplus_{n \in \bQ} V(g)_n$ of the centralizer $C_{\bM}(g)$, and to each pair $(g,h)$ of commuting elements of $\bM$ a holomorphic function $Z(g,h,\tau)$ on the complex upper half-plane $\fH$, satisfying the following conditions:
\begin{enumerate}
\item There is some lift $\tilde{h}$ of $h$ to a linear transformation on $V(g)$ such that
\[ Z(g,h,\tau) = \sum_{n \in \bQ} Tr(\tilde{h}|V(g)_n) q^{n-1}. \]
\item $Z(g,h,\tau)$ is invariant up to constant multiplication under simultaneous conjugation of the pair $(g,h)$ in $\bM$.
\item $Z(g,h,\tau)$ is either constant, or a Hauptmodul for some genus zero congruence group.
\item For any $\left( \begin{smallmatrix} a & b \\ c & d \end{smallmatrix} \right) \in SL_2(\bZ)$, $Z(g^a h^c, g^b h^d, \tau)$ is proportional to $Z(g,h, \frac{a \tau + b}{c \tau + d})$.
\item $Z(g,h,\tau) = J(\tau) = \frac{E_4(\tau)^3}{\Delta(\tau)} - 744 = q^{-1} + 196884 q + \cdots$ if and only if $g=h=1 \in \bM$.
\end{enumerate}
\end{conj}

This is a generalization of the Monstrous Moonshine Conjecture, in the sense that fixing $g=1$ yields an assertion that one has a graded representation of $\bM$ whose characters form Hauptmoduln.  It also subsumes the example observation above, in the sense that if $q^{-1} + 4372q + 96256q^2 + \cdots$ is the Moonshine character of an element $g$ in class 2A of the Monster, then $Z(g,1,\tau)$ is proportional to $q^{-1/2} + 4372q^{1/2} + 96256q + \cdots$, and these coefficients decompose into well-behaved representations of $2.\bB$, the centralizer of an element in class 2A of the Monster.

In contrast to the situation with the original Monstrous Moonshine conjecture, we do not have a list of conjugacy classes of commuting pairs of elements in $\bM$, or a list of character tables of central extensions of centralizers of elements at our disposal.  Conversations with group theorists suggest that these explicit data are still beyond current technology, so at the moment, we cannot solve the conjecture by computational brute force.  Instead, one needs to find an interpretation of the spaces $V(g)$ and the functions $Z$, and one must seek general methods for proving that traces like $Z$ yield Hauptmoduln, and for showing that traces are $SL_2(\bZ)$-compatible.

Shortly after Norton published this conjecture, Dixon, Ginsparg, and Harvey found a physical interpretation for claims 1,2,4, and 5: For each $g \in \bM$, the space $V(g)$ is the ``Hilbert space twisted by $g$'' in a conformal field theory with $\bM$ symmetry, and $Z(g,h)$ is a genus one partition function for a torus twisted in ``space'' by $g$ and in ``time'' by $h$ \cite{DGH88}.  In algebraic language, we would say that for each $g \in \bM$, we have a distinguished irreducible $g$-twisted module $V^\natural(g)$ of the Monster vertex operator algebra, and one may hope that such objects exist and are unique up to isomorphism.

The first major mathematical breakthrough toward a solution of this conjecture then came in \cite{DLM00}, with the following results:
\begin{enumerate}
\item Theorem 10.3 asserts that for any $g \in \bM$, there exists an irreducible $g$-twisted $V^\natural$-module, and it is unique up to isomorphism.  By Schur's Lemma, such a module $V^\natural(g)$ admits a canonical projective action of $C_{\bM}(g)$.  By general nonsense about conjugation, this resolves Claim 2, at least on the level of formal power series.
\item Theorem 8.1 implies the series $\sum_{n \in \bQ} Tr(\tilde{h}|V(g)_n) q^{n-1}$ converges absolutely uniformly on compact subsets of the complex upper half-plane to a holomorphic function, for any lift $\tilde{h}$ of an element $h \in C_{\bM}(g)$.  We can therefore define $Z(g,h,\tau)$ to be this function for some choice of $\tilde{h}$.  This resolves Claim 1, and translates the power-series version of Claim 2 to a holomorphic function version.
\item Theorem 10.3 also contains an assertion that implies $Z(g,1,\tau)$ is a constant multiple of $Z(1,g,-1/\tau)$.  This resolves Claim 5, essentially by the fact that $Z(1,g,\tau)$ has level greater than 1 for any $g \neq 1$.
\item Theorem 10.1 reduces Claim 4 to an conjecture about $g$-rationality of $V^\natural$, i.e., that any $g$-twisted $V^\natural$-module is isomorphic to a direct sum of copies of $V^\natural(g)$.
\end{enumerate}
Claim 4 was recently resolved: By Corollary 5.26 of \cite{CM16}, $g$-rationality holds for any simple regular non-negatively graded vertex operator algebra with an invariant bilinear form, and $V^\natural$ satisfies these properties.  The final remaining claim is Claim 3, the Hauptmodul-or-constant property, and this is the subject of this paper.

Under the current interpretation, Claims 1,2,4, and 5 are in fact general claims about non-negatively graded holomorphic $C_2$-cofinite vertex operator algebras and their automorphisms.  That is, these claims hold true for an infinite class of examples, and don't have much to do with the Monster, except for the fact that $V^\natural$ is a particularly well-behaved vertex operator algebra.  Claim 3 has a more exceptional, moonshine-like quality, since there are only finitely many Hauptmoduln, and the restriction on level gives an indication of smallness.  For this reason, a solution cannot be expected to appear directly from general facts about vertex operator algebras or conformal field theory.  Perhaps unsurprisingly, this is the only claim that was not given a physical interpretation by Dixon-Ginsparg-Harvey.  Proving such an exceptional claim seems to require a specialized strategy, and this is where we come to our second major breakthrough.

In an unpublished preprint \cite{H03}, H\"ohn resolved the Hauptmodul claim for the case where $g$ is an element in the conjugacy class 2A.  The resolution of this case is by itself a substantial advance, but more importantly, H\"ohn's preprint contained a general strategy for proving that Generalized Moonshine trace functions are Hauptmoduln.  Alternatively, one may say that H\"ohn worked out precisely the ingredients needed to extend Borcherds's proof of the Monstrous Moonshine Conjecture to the general case in question.  We shall call this the Borcherds-H\"ohn program:
\begin{enumerate}
\item Build an abelian intertwining algebra structure on the direct sum $\bigoplus_{i=0}^{|g|-1} V^\natural(g^i)$.
\item Add a spacetime torus: Make a conformal vertex algebra of central charge 26 by taking a graded tensor product with the abelian intertwining algebra of a suitable hyperbolic lattice.
\item Quantize: Apply a bosonic string quantization functor to the conformal vertex algebra.  This yields a Lie algebra $\fm_g$ equipped with a canonical projective action of $C_{\bM}(g)$ by automorphisms.
\item Generate a Lie algebra $L_g$ whose denominator identity is some automorphic infinite product.  This Lie algebra has a ``nice shape'', in the sense that its simple roots and homology are well-controlled.
\item Comparison: Show that the Lie algebra $\fm_g$ is isomorphic to $L_g$.  By \textit{transport de structure}, we obtain a Lie algebra with both a group action and nice shape.
\item Hauptmodul conclusion: Use the twisted denominator identity to produce recursion relations on the characters that are strong enough to conclude that the characters are Hauptmoduln.
\end{enumerate}

When $g=1$, this program gives an outline of Borcherds's proof \cite{B92}, with the first step given by Frenkel-Lepowsky-Meurman's construction of $V^\natural$.  When $g$ is an element of type 2A, this program gives an outline of H\"ohn's proof of Generalized Moonshine for $2.\bB$ \cite{H03}.  In both of those cases, the last step was settled by combining recursion relations with an explicit computation of the first few terms, to match the proposed functions with the actual characters.  In general, we do not have character tables for central extensions of centralizers in the Monster, so such explicit computations are out of the question.  Instead, we found a general Hauptmodul criterion involving equivariant Hecke operators in \cite{GM1}, generalizing the results of \cite{CG97}.  While Cummins-Gannon could be applied to remove the computations from Borcherds's proof of the Monstrous Moonshine conjecture \cite{B92}, our methods could be used to eliminate explicit computations for all cases we expected to encounter, including H\"ohn's proof.  This settles the last step, conditional on the constructions in the previous steps.

The step with $L_g$ was resolved in \cite{GM2}, i.e., we constructed automorphic infinite products and candidate Lie algebras for each element of the Monster.  We also defined a precise condition, called ``Fricke-compatibility'', on a projective action of $C_{\bM}(g)$ on $L_g$ that suffices for showing that the Hauptmodul condition holds for $g$ of Fricke type.  The question of producing such an action is essentially equivalent to the question of constructing $\fm_g$ and showing that $\fm_g \cong L_g$, and this is precisely what we resolve in this paper.

The first step is the most technically challenging.  For example, in the case $g=1$, this is the construction of the moonshine module $V^\natural$, which takes up the whole book \cite{FLM88}.  Before my recent project with Miyamoto culminating in \cite{CM16}, I had formed an elaborate plan to construct suitable abelian intertwining algebras using the theory of conformal blocks on twisted nodal curves together with some work eliminating homological obstructions.  Some of this theory was sketched in my dissertation \cite{C07}, and the third paper in this series \cite{GM3} is intended to present a full development.  I believe the methods are still useful in a general mathematical sense, and the first installment of results should appear in the near future.

We therefore come to the third major breakthrough, which rendered my plan superfluous for the purpose of proving Generalized Moonshine.  In \cite{vEMS}, the authors showed that there is an abelian intertwining algebra structure on a sum of twisted modules of a well-behaved vertex operator algebra, by an ingenious application of the Verlinde formula \cite{H08} together with the fixed-point regularity results of \cite{CM16}.  Furthermore, they determined the precise characters of modules of the fixed point vertex algebra $(V^\natural)^{\langle g \rangle}$.  Thus, their results essentially establish the first step, and go a long way toward the comparison step.

We now describe how the results of this paper fit in.  In order to construct $\fm_g$, we need a functorial way to pass from abelian intertwining algebras to Lie algebras.  We will employ an ``add a spacetime torus and quantize'' functor that has existed in some form for over 40 years, but we supply some necessary details that have not appeared in the literature.  Adding a torus is just a degree-wise tensor product with the abelian intertwining algebra of a well-chosen hyperbolic rational lattice.  The quantization functor takes conformal vertex algebras of central charge 26 to Lie algebras.  We exploit the ``cancellation of oscillators'' phenomenon to describe the homogeneous spaces of the Lie algebras explicitly in terms of the input object.  This theory was one of the main engines behind Frenkel's bound on root multiplicities of the Lie algebra whose simple roots form the Leech Lattice as its Dynkin diagram \cite{F85}, Borcherds's proof of the Monstrous Moonshine conjecture, and Hoehn's proof of Generalized Moonshine in the 2A case.

Once $\fm_g$ is constructed, we show that it is a Borcherds-Kac-Moody algebra by introducing a new criterion that is particularly well-suited to considering the output of the quantization functor.  Then, the homogeneous subspaces become root spaces, and they are canonically identified with certain subspaces of twisted modules as representations of some central extension of $C_{\bM}(g)$.  The comparison isomorphism $\fm_g \cong L_g$ then follows from a comparison of root multiplicities, and this is made possible by the precise character work in \cite{vEMS}. 

The reader may be interested to know just how much information about $V^\natural$ and the Monster was necessary for our proof to work.  In fact, we need almost no fine structure at all.  By examining the proof in this paper, together with the work on which it depends, we see that in fact we have the following result:

\begin{thm}
Let $V$ be a holomorphic $C_2$-cofinite vertex operator algebra of central charge 24 whose character is $J(\tau)$, and let $G$ be a finite order group of automorphisms of $V$.  Then any rule assigning to each $g \in G$ the irreducible $g$-twisted module $V(g)$, and to each commuting pair $(g,h)$ the function $Z(g,h;\tau) = \Tr(\tilde{h} q^{L_0-1}|V(g))$ for some lift $\tilde{h} \in \widetilde{C_G(g)}$ of $h$ to a linear transformation on $V(g)$, satisfies the conditions of Norton's conjecture, with $G$ in place of $\bM$.
\end{thm}

This is quite similar to the situation with Borcherds's proof of the Monstrous moonshine conjectures - one only needs central charge 24, an invariant bilinear form, and the character $J$ to get completely replicable functions  (which were later shown to be either highly degenerate or Hauptmoduln in \cite{CG97}).  However, without the construction of $V^\natural$ in \cite{FLM88}, this result loses most of its value, since the connection to the Monster is lost.  As it happens, Frenkel-Lepowsky-Meurman conjectured in section 3 of the introduction of \textit{loc. cit.} that any $V$ satisfying (more or less) the hypotheses in this theorem is isomorphic to $V^\natural$, so if we assume a suitable variant of the FLM Uniqueness Conjecture, then the statement given here becomes essentially the same as our Main Theorem (Theorem \ref{thm:main}).

We note that there is previous work on Generalized Moonshine that establishes some Hautpmodul results for twisted modules - perhaps the most advanced was \cite{IT02}.  However, these results were conditional on both the Uniqueness Conjecture and the construction of holomorphic orbifolds established in \cite{vEMS}.

We do not consider this the end of the Generalized Moonshine story.  There are conjectural refinements of our main theorem, and intriguing conjectural relations between Generalized Moonshine and other fields of mathematics and physics.  We focus on three further questions:
\begin{enumerate}
\item Norton has suggested stronger versions of this conjecture based on computational evidence \cite{N01}.  In particular, he asserts that the constant ambiguity can be reduced to a root of unity (in particular a 24th root), and that for any $(g,h)$, there is some rational $k$ such that each coefficient of $Z(g,h,\tau+k)$ generates a field extension of $\bQ$ with Galois group of exponent 2.  Terry Gannon has proposed a further refinement of the root-of-unity restriction, asserting that the proportionality constants are completely controlled by a 3-cocycle in the cohomology of $\bM$, with coefficients in $U(1)$.  The precise conjectural relationship between a suitable 3-cocycle and the proportionality constants is detailed in section 2 of \cite{GPV}.  Such a cocycle is expected from orbifold conformal field theory considerations \cite{DPR90}, and an analogue of this refinement for $M_{24}$ moonshine was established in \cite{GPRV}.  We note that this refinement is a general statement about twistings of holomorphic $C_2$-cofinite vertex operator algebras, and does not have a substantial connection to the Hauptmodul problem we solve in this paper.
\item Recall that almost all of the claims in the Generalized Moonshine conjecture were given physical interpretations in \cite{DGH88}, and the remaining claim concerns Hauptmoduln.  As it happens, there is a conjectural connection between Hauptmoduln and physics through Rademacher sums, as suggested in \cite{DF11}.  The general picture is that AdS/CFT correspondence seems to relate 2+1-dimensional gravity in the extreme quantum regime with conformal field theories satisfying smallness conditions.  In particular, Duncan and Frenkel propose the existence of a class of second-quantized twisted chiral quantum gravity theories that can be manipulated to produce the monstrous Lie algebras $\fm_g$ constructed in this paper.
\item There are intriguing conjectural connections between moonshine and equivariant elliptic cohomology.  The folklore idea is that elliptic cohomology is somehow controlled by, or is a shadow of, conformal field theory.  Concretely, an element of the elliptic cohomology of the classifying space of a finite group $G$ is a datum that includes a family of modular forms parametrized by a pair of commuting elements of $G$, satisfying an $SL_2(\bZ)$-compatibility condition.  It was noted quite early (and I am not sure who was first - perhaps Mason) that the weight zero version of this condition is precisely given as claim 4 in the Generalized Moonshine conjecture.   While elliptic cohomology does not play a direct role in our proof of the Generalized Moonshine conjecture, it was essential in an inspirational sense.  Indeed, equivariant Hecke operators, seen as power operations in equivariant elliptic cohomology, were the key to the formulation of a good Hauptmodul criterion in \cite{GM1}.  I believe the precise formula was first written in \cite{G09}, where one may also find a topological interpretation of replicability.  This piece of history, together with hints relating the formation of $\fm_g$ to exponential operations, suggests that the Hauptmodul claim may be amenable to a topological interpretation.
\end{enumerate}
We would be interested in seeing any developments toward the refined versions of this conjecture, and the establishment of stronger connections to other mathematical disciplines.

\subsection{Overview}

In section 2, we introduce some vertex algebraic constructions, such as abelian intertwining algebras and the ``add a torus'' functors.  The main purpose of this section is to take the results of \cite{vEMS}, and transmute their construction into a form that is amenable to quantization.

In section 3, we construct Lie algebras, using a generalization of Borcherds's method for constructing the Monster Lie algebra.  Due to the functorial nature of the construction, we obtain canonical group actions by homogeneous automorphisms.  We show that for Fricke elements, the resulting Lie algebras are necessarily Borcherds-Kac-Moody, and we show that the root multiplicities match the dimensions of certain eigenspaces in twisted modules.

In section 4, we compare root multiplicities.  For Fricke elements, this then yields isomorphisms with the Lie algebras constructed in \cite{GM2}, and we find that characters of centralizers are Hauptmoduln.  This then implies the full conjecture.

\subsection{Acknowledgments}

The author would like to thank Richard Borcherds, Jethro van Ekeren, Jeff Harvey, Yoshitake Hashimoto, Gerald H\"ohn, Satoshi Kondo, Gregory Moore, Lubo\v{s} Motl, Yu Nakayama, Simon Norton, and Nils Scheithauer for helpful conversations and advice.  Kazuya Kawasetsu, Sven M\"oller, and Hiromichi Yamada offered suggestions and corrections to an earlier version of this article.

This research was partly supported by NSF grant DMS-0354321,  JSPS Kakenhi Grant-in-Aid for Young Scientists (B) 24740005, the Program to Disseminate Tenure Tracking System, MEXT, Japan, and the World Premier International Research Center Initiative (WPI Initiative), MEXT, Japan.

\section{Vertex algebraic input}

Algebraic structures related to vertex algebras are essential to moonshine, but we will not need to spend much time on fine details, since most of the technical aspects have been worked out.  We will review some basic facts, then pursue some consequences of the work of \cite{vEMS}.

We will use the notation $e(s)$ to denote the normalized exponential $e^{2\pi i s}$.

\subsection{Vertex algebras and twisted modules}

\begin{defn}
A vertex algebra over $\cC$ is a complex vector space $V$ equipped with a distinguished vector $\unit \in V$, a distinguished linear transformation $T: V \to V$, and a left multiplication map $Y: V \to (\End V)[[z,z^{-1}]]$, written $Y(a,z) = \sum_{n \in \bZ} a_n z^{-n-1}$ satisfying the following conditions:
\begin{enumerate}
\item $Y(\unit, z) = id_V = id_V z^0$, and $Y(a,z)\unit \in a + zV[[z]]$.
\item For any $a,b \in V$, $Y(a,z)b \in V((z))$, i.e., $a_n b = 0$ for $n$ sufficiently large.
\item $[T, Y(a,z)] = \frac{d}{dz} Y(a,z)$
\item The Jacobi identity: for any $a,b \in V$, 
\[ x^{-1} \delta\left(\frac{y-z}{x}\right) Y(a,y) Y(b,z) - x^{-1} \delta\left(\frac{z-y}{-x}\right) Y(b,z) Y(a,y) =  
z^{-1} \delta\left(\frac{y-x}{z}\right) Y(Y(a,x)b,z) \]
where $\delta(z) = \sum_{n \in \bZ} z^n$ and $\delta(\frac{y-z}{x})$ is expanded as a formal power series in $z$.
\end{enumerate}
An automorphism of a vertex algebra $V$ is a linear transformation $g: V \to V$ that fixes $\unit$, and satisfies $Y(ga,z)gb = gY(a,z)b$ for all $a,b \in V$.
\end{defn}

\begin{defn}
A conformal vertex algebra of central charge $c \in \bC$ is a vertex algebra equipped with a distinguished nonzero vector $\omega$, satisfying the following conditions:
\begin{enumerate}
\item If we write $Y(\omega,z) = \sum_{n \in \bZ} L_n z^{-n-2}$, then the coefficients $L_n \in \End V$ satisfy the Virasoro relations:
\[ [L_m, L_n] = (m-n)L_{m+n} + c \frac{m^3-m}{12} \delta_{m+n,0} id_V \]
\item $L_0$ acts semisimply on $V$, with integer eigenvalues.
\item $L_{-1} = T$.
\end{enumerate}
An automorphism of a conformal vertex algebra is an automorphism of the underlying vertex algebra that fixes $\omega$.  A vertex operator algebra is a conformal vertex algebra for which the eigenvalues of $L_0$ are bounded below and have finite multiplicity.
\end{defn}

We will only consider one vertex operator algebra in this paper, namely the Monster vertex operator algebra $V^\natural$ that was constructed in \cite{FLM88}.

Vertex algebras have a more complicated representation theory than ordinary algebras, because modules may be twisted by automorphisms of the vertex algebra.

\begin{defn}
Let $V$ be a vertex algebra over $\bC$, and let $g$ be an automorphism of order $n$.  Then a $g$-twisted $V$-module is a complex vector space $M$ equipped with an action map $Y^M: V \to (\End M)[[z^{1/n}, z^{-1/n}]]$, written $Y^M(a,z) = \sum_{k \in \frac{1}{n} \bZ} a_k z^{-k-1}$ satisfying the following conditions:
\begin{enumerate}
\item $Y^M(\unit,z) = id_M$
\item For any $a \in V$, $u \in M$, $Y^M(a,z)u \in M((z^{1/n}))$, i.e., $a_k u = 0$ if $k$ is sufficiently large.
\item If $a \in V$ satisfies $ga = e(k/n)a$ for some $k \in \bZ$, then $Y^M(a,z) \in z^{k/n}(\End M)[[z,z^{-1}]]$.
\item The twisted Jacobi identity holds: if $ga = e(k/n)a$, then for any $b \in V$,
\[ \begin{aligned}
x^{-1} \delta\left(\frac{y-z}{x}\right) &Y^M(a,y) Y^M(b,z) - x^{-1} \delta\left(\frac{z-y}{-x}\right) Y^M(b,z) Y^M(a,y) =  \\
&= z^{-1} \left(\frac{y-x}{z} \right)^{-k/n} \delta\left(\frac{y-x}{z}\right) Y^M(Y(a,x)b,z)
\end{aligned} \]
\end{enumerate}
If $V$ is a vertex operator algebra, these conditions define the notion of ``weak $g$-twisted $V$-module''.  An ``admissible $g$-twisted $V$-module'' is a weak $g$-twisted $V$-module that admits a $\frac{1}{n}\bZ_{\geq 0}$-grading that is compatible with the $L_0$-grading on $V$.  An ``ordinary $g$-twisted $V$-module'' is an admissible $g$-twisted $V$-module for which $L_0$ acts semisimply, with finite dimensional eigenspaces, and eigenvalues that are bounded below in each coset of $\bZ$ in $\bC$.  When $g=1$, we replace ``$g$-twisted $V$-module'' with ``$V$-module''.
\end{defn}

\begin{defn}
Here are a few more technical conditions about vertex algebras.  We won't use them in any specific way, but they appear as conditions in the theorems we need.
\begin{enumerate}
\item A vertex algebra $V$ is $C_2$-cofinite if the subspace $C_2(V)$ spanned by $\{ a_2 b | a,b \in V\}$ satisfies the property that $V/C_2(V)$ is finite dimensional.
\item A vertex operator algebra $V$ is $g$-rational if all admissible $g$-twisted $V$-modules are direct sums of irreducible ordinary $g$-twisted $V$-modules.
\item A vertex operator algebra $V$ is holomorphic if all admissible $V$-modules are isomorphic to direct sums of $V$.
\item A vertex operator algebra $V$ is of CFT type if $L_0$ has only non-negative eigenvalues, and the kernel of $L_0$ is spanned by $\unit$.
\end{enumerate}
\end{defn}

\begin{rem} \label{rem:automorphisms-of-twisted-modules}
Theorem 10.3 in \cite{DLM00} asserts that if a vertex operator algebra $V$ is $C_2$-cofinite and holomorphic, then there exists a unique irreducible $g$-twisted $V$-module, which we will write as $V(g)$.  For any automorphism $h$ of $V$, one may change the action of $V$ on the underlying vector space of $V(g)$ to produce a module isomorphic to $V(h^{-1}gh)$.  By Schur's Lemma, we obtain a canonical projective action of $C_{\Aut V}(g)$ on the underlying vector space of $V(g)$, which is compatible with the action on $V$ (but it is not an action by twisted module automorphisms, since those are just scalars).  
\end{rem}

\begin{defn}
We write $\Aut V(g)$ to denote the group of vector space automorphisms of $V(g)$ that are compatible with automorphisms of $V$ - it is a central extension of $C_{\Aut V}(g)$ by $\bC^\times$.
\end{defn}

\subsection{Abelian intertwining algebras}

Dong and Lepowsky introduced abelian intertwining algebras essentially as vertex algebras with some monodromy obstructing commutativity.  We describe them briefly, and we describe the theorem we need from \cite{vEMS}.

\begin{defn} (\cite{M52})
Let $A$ be an abelian group.  The Eilenberg-Mac Lane cochain complex with coefficients in $\bC^\times$ has the following description in low degree:
\[ \xymatrix{ K^1 \ar[r]^-{d^1} & K^2 = \{ f: A \times A \to \bC^\times \} \ar[r]^-{d^2} &  K^3 = \{ (F, \Omega): A^{\oplus 3} \times A^{\oplus 2} \to \bC^\times \} \ar[r]^-{d^3} & \cdots } \]
where:
\begin{enumerate}
\item $K^1 = \{ \phi: A \to \bC^\times \}$ and $K^2 = \{ f: A \times A \to \bC^\times \}$ are groups of set-theoretic maps.
\item $K^3 = \{ (F, \Omega): A^{\oplus 3} \times A^{\oplus 2} \to \bC^\times \}$ is a group of pairs of set-theoretic maps.
\item The map $d^1: K^1 \to K^2$ is given by the usual group cohomology coboundary $\phi \mapsto d^1 \phi$, defined by $d^1\phi(i,j) = \phi(j) - \phi(i+j) + \phi(i)$.
\item The map $d^2: K^2 \to K^3$ is given by the group cohomology coboundary $d^2 f(i,j,k) = f(j,k) - f(i+j,k) + f(i,j+k) - f(i,j)$ together with the antisymmetrizer: $(i,j) \mapsto \frac{f(i,j)}{f(j,i)}$.
\item The map $d^3: K^3 \to K^4$ vanishes if and only if the following conditions are satisfied: 
\begin{enumerate}
\item $F(i,j,k) F(i,j+k,\ell) F(j,k,\ell) = F(i+j,k,\ell) F(i,j,k+\ell)$ for all $i,j,k,\ell \in A$
\item $F(i,j,k)^{-1}\Omega(i,j+k) F(j,k,i)^{-1} = \Omega(i,j) F(j,i,k)^{-1} \Omega(i,k)$
\item $F(i,j,k) \Omega(i+j,k) F(k,i,j) = \Omega(j,k) F(i,k,j) \Omega(i,k)$
\end{enumerate}
\end{enumerate}
The kernel of $d^n$ is written $Z^n_{ab}(A,\bC^\times)$, and elements are called abelian $n$-cocycles.   The cohomology of the Eilenberg-MacLane complex is called abelian cohomology, and written $H^n_{ab}(A,\bC^\times)$.  An abelian 3-cocycle is normalized if $F(i,j,0) = F(i,0,k) = F(0,j,k) = 1$ and $\Omega(i,0) = \Omega(0,j) = 1$ for all $i,j,k \in A$.
\end{defn}

We note that any cohomology class has a normalized representative.

\begin{lem} \label{lem:cocycle-trace} (\cite{M52} Theorem 3)
If $(F, \Omega)$ is an abelian 3-cocycle, then the map $Q: A \to \bC^\times$ defined by $i \mapsto \Omega(i,i)$ is a quadratic function, i.e., $Q(i) = Q(-i)$ for all $i \in A$, and $\frac{Q(i+j+k) Q(i) Q(j) Q(k)}{Q(i+j) Q(i+k) Q(j+k)} = 1$ for all $i,j,k \in A$.  Furthermore, the trace map $(F, \Omega) \mapsto (i \mapsto \Omega(i,i))$ induces a bijection from $H^3_{ab}(A, \bC^\times)$ to the set of $\bC^\times$-valued quadratic functions on $A$.
\end{lem}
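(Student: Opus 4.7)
The plan is to verify three claims in sequence: first, the diagonal map $Q(i) = \Omega(i,i)$ satisfies the two quadratic identities; second, the trace map descends to cohomology; third, the induced map on $H^4(K(A,2);\bC^\times)$ is a bijection.

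For the first claim, the strategy is direct manipulation of the hexagon axioms (b) and (c). After normalizing the cocycle so that $\Omega(0,i) = \Omega(i,0) = 1$ and $F$ is trivial whenever an input is $0$ (which is forced by the axioms upon adjustment by a coboundary), the symmetry $Q(-i) = Q(i)$ is obtained by substituting triples such as $(-i,i,i)$ and $(i,-i,-i)$ into (b) and (c) and combining the resulting equations so that the surviving $F$-terms cancel. The cubic identity is derived similarly by applying the hexagons at a carefully chosen family of triples built from $\{i,j,k,i+j,j+k,i+k,i+j+k\}$ and then multiplying the resulting equations so that all $F$-factors cancel via the pentagon axiom (a); what remains is exactly $Q(i+j+k)Q(i)Q(j)Q(k) = Q(i+j)Q(i+k)Q(j+k)$.

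For the second claim, an abelian $2$-coboundary has the shape $F = d^2 f$ (the ordinary group-cohomology coboundary) and $\Omega(i,j) = f(i,j)/f(j,i)$ for some $f: A \times A \to \bC^\times$. Evaluating $\Omega$ on the diagonal yields $f(i,i)/f(i,i) = 1$, so the trace of every coboundary is trivial; therefore the trace map $(F,\Omega) \mapsto Q$ factors through a group homomorphism from $H^4(K(A,2);\bC^\times)$ to the group of $\bC^\times$-valued quadratic functions on $A$.

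For the bijectivity, the cleanest route is to invoke the Eilenberg--Mac Lane identification of $H^4(K(A,2);\bC^\times)$ with $\Hom(\Gamma(A),\bC^\times)$, where $\Gamma$ is Whitehead's universal quadratic functor, together with the universal property that $\bC^\times$-valued quadratic functions on $A$ are exactly $\Hom(\Gamma(A),\bC^\times)$; the trace map then manifestly realizes this identification. A more hands-on alternative is to prove surjectivity by explicitly constructing, for each quadratic $Q$, a cocycle whose trace is $Q$ (for instance by choosing a presentation of $A$ and a bilinear lift $B$ of $Q$ on the generators, then extending via the hexagon recipes), and to prove injectivity by using the pentagon to reduce to the case $F = 1$, in which case $\Omega$ becomes bimultiplicative and symmetric with trivial diagonal, and hence visibly of the form $f(i,j)/f(j,i)$ for a suitable $f$. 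The main obstacle is this bijectivity claim; rather than reproduce the construction in full, I would simply cite Mac Lane's Theorem 3 as the statement already does.
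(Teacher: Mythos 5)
Your proposal is essentially in line with the paper: the paper's entire proof is a citation to \cite{EM54}, Theorem 26.1, and your primary route likewise defers the substantive bijectivity statement to Mac Lane (equivalently, to the identification $H^4(K(A,2),\bC^\times) \cong \Hom(\Gamma(A),\bC^\times)$ with Whitehead's functor $\Gamma$, which is legitimate since $\bC^\times$ is divisible so the $\Ext$ term vanishes). Your sketch of the easier parts is sound in outline: the trace of an abelian coboundary $(d^2 f, f(i,j)/f(j,i))$ is visibly $1$, so the trace descends to cohomology, and the quadraticity of $Q(i)=\Omega(i,i)$ does follow from hexagon manipulations (the key intermediate fact being that $b(i,j)=\Omega(i,j)\Omega(j,i)$ is bimultiplicative and $Q(i+j)=Q(i)Q(j)b(i,j)$), though verifying that your particular choices of triples make all $F$-factors cancel is exactly the fiddly computation Eilenberg--Mac Lane carry out.

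One caution about your ``hands-on alternative'' for injectivity: the step ``use the pentagon to reduce to the case $F=1$'' is not available. The pentagon only says $F$ is an ordinary group-cohomology $3$-cocycle, and an abelian $2$-cochain adjustment changes $F$ by a group-cohomology coboundary, so $F$ can be trivialized only if its class in $H^3(A,\bC^\times)$ vanishes --- which is generally false for abelian $3$-cocycles. For example, on $A=\bZ/2\bZ$ the classes with $Q(1)=\pm i$ (the ``semion'' braidings) have cohomologically nontrivial $F$-component, so no cochain adjustment makes $F\equiv 1$; the reduction you propose would only be justified after one already knows the trace classifies the class, which is what injectivity is supposed to prove. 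Since you explicitly fall back on citing Mac Lane's Theorem 3 (as the paper does with \cite{EM54}), this does not sink the proposal, but the alternative argument should either be dropped or replaced by the actual Eilenberg--Mac Lane computation.
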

\begin{proof}
This is proved as \cite{EM54}, theorem 26.1.
\end{proof}

\begin{defn}
Let $A$ be an abelian group, let $(F,\Omega)$ be an abelian 3-cocycle on $A$.  We write $q_\Omega: A \to \bC/\bZ$ for the unique quadratic map such that $e(q_\Omega(a)) = \Omega(a,a)$ for all $a \in A$, and we write $b_\Omega: A \times A \to \bC/\bZ$ for the induced bilinear form.
\end{defn}

We take the following definition from \cite{DL93}

\begin{defn}
Let $A$ be an abelian group, let $(F,\Omega)$ be a normalized abelian 3-cocycle on $A$.  Then an abelian intertwining algebra of level $N \in \bZ_{\geq 1}$ and central charge $c \in \bQ$ associated to the datum $(A, F, \Omega)$ is a complex vector space $V$ equipped with
\begin{enumerate}
\item a $\frac{1}{N}\bZ \times A$-grading $V = \bigoplus_{n \in \frac{1}{N} \bZ} V_n = \bigoplus_{i \in A} V^i = \bigoplus_{n \in \frac{1}{N} \bZ, i \in A} V_n^i$
\item a left-multiplication $Y: V \to (\End V)[[z^{1/N},z^{-1/N}]]$ written $Y(a,z) = \sum_{n \in \frac{1}{N}\bZ} a_n z^{-n-1}$, and
\item distinguished vectors $\unit \in V_0^0$ and $\omega \in V_2^0$
\end{enumerate}
satisfying the following conditions for any $i,j,k \in A$, $a \in V^i$, $b \in V^j$, $u \in V^k$, and $n \in \frac{1}{N}\bZ$:
\begin{enumerate}
\item $a_n b \in V^{i+j}$.
\item $a_n b = 0$ for $n$ sufficiently large.
\item $Y(\unit, z)a = a$
\item $Y(a,z)\unit \in a + zV[[z]]$.
\item  $Y(a,z)b = \sum_{k \in b_\Omega(i,j) + \bZ} a_k b z^{-k-1}$.
\item The Jacobi identity:
\[ \begin{aligned}
 x^{-1} &\left(\frac{y-z}{x} \right)^{b_\Omega(i,j)} \delta\left(\frac{y-z}{x}\right) Y(a,y) Y(b,z) u \\
 &- B(i,j,k) x^{-1} \left(\frac{z-y}{e^{i\pi}x} \right)^{b_\Omega(i,j)} \delta\left(\frac{z-y}{-x}\right) Y(b,z) Y(a,y) u = \\
& \qquad = F(i,j,k) z^{-1} \delta\left(\frac{y-x}{z}\right) Y(Y(a,x)b,z) \left(\frac{y-x}{z} \right)^{-b_\Omega(i,k)} u 
\end{aligned}\]
where $B(i,j,k) = \frac{\Omega(i,j)F(i,j,k)}{F(j,i,k)}$. 
\item The coefficients of $Y(\omega,z) = \sum_{k \in \bZ} L_k z^{-k-2}$ satisfy the Virasoro relations at central charge $c$.
\item $L_0 a = na$ if $a \in V_n$.
\item $\frac{d}{dz} Y(a,z) = Y(L_{-1}a,z)$.
\end{enumerate}
\end{defn}

Given a suitably nondegenerate abelian intertwining algebra (namely one where left multiplication by elements of $V^i$ is not the zero map), one has an additional $\bC/\bZ$-valued quadratic form $q_\Delta$ on $A$, given by setting $q_\Delta(i)$ to be the conformal weight of $V^i$.  It is not hard to show that the associated bilinear forms satisfy $b_\Omega = - b_\Delta$ (see, e.g., the discussion in \cite{vEMS} after Theorem 4.2).  Furthermore, it is often the case that we have $q_\Omega = -q_\Delta$ - when this holds, we say that the abelian intertwining algebra is ``even''.

Dong and Lepowsky give the following generalization of the lattice vertex algebra construction: for any finite rank free abelian group $L$ with a $\frac{1}{N}\bZ$-valued quadratic form, we may produce the abelian intertwining algebra $V = V_L$ attached to $L$.  It is $L$-graded, and the $L$-homogeneous pieces are the usual modules $\pi_\lambda^{L \otimes \bC}$ of the Heisenberg vertex algebra $\pi_0^{L \otimes \bC}$ for the quadratic vector space $L \otimes \bC$ attached to vectors $\lambda \in L$.  This abelian intertwining algebra is even, i.e., it satisfies $q_\Delta = -q_\Omega$.  One needs to choose a cocycle $(F,\Omega)$ in the cohomology class of $q_\Omega$ to obtain an abelian intertwining algebra, but we have the following way to pass between different choices.

\begin{lem} \label{lem:twist-by-coboundary}
Given an abelian intertwining algebra $V$ with cocycle $(F,\Omega)$, and any function $f \in K^2$ satisfying $f(i,0) = f(0,j) = 1$, the tuple $(V,1,\omega, f \cdot Y)$ is an abelian intertwining algebra with cocycle $d_2f \cdot (F, \Omega)$.  Here, $f \cdot Y$ is the map $V \to (\End V)[[z^{1/N},z^{-1/N}]]$ that takes a pair $a \in V^i$, $b \in V^j$ to $f(i,j) \cdot Y(a,z)b$.
\end{lem}
\begin{proof}
This is Remark 12.23 in \cite{DL93}.
\end{proof}

\begin{defn}
Let $q$ be a quadratic form on an abelian group $A$ that takes values in $\frac{1}{N} \bZ/\bZ$.  We define $AIA^c_{(A,q)}$ to be the following groupoid: objects are abelian intertwining algebras of level $N$ with central charge $c$, associated to the datum $(A,F,\Omega)$ for some abelian 3-cocycle $(F,\Omega)$ such that $q_\Omega = e(q)$.  Morphisms are homogeneous isomorphisms of abelian intertwining algebras.
\end{defn}

By Lemma \ref{lem:twist-by-coboundary}, this groupoid is equipped with an action of the abelian 3-coboundary group by autofunctors.

The last structure we introduce is a graded tensor product.

\begin{defn}
Let $V$ be an object in $AIA^c_{(A,q)}$ with cocycle $(F,\Omega)$, and let $W$ be an object in $AIA^{c'}_{(A,q')}$ with cocycle $(F',\Omega')$.  Then we define $V \otimes^A W$ to be the following object in $AIA^{c+c'}_{(A,qq')}$ with cocycle $(FF',\Omega\Omega')$:
\begin{enumerate}
\item The underlying graded vector space is $\bigoplus_{i \in A} V^i \otimes W^i$
\item Multiplication is induced by tensor product maps.  That is, $Y(a \otimes a',z)(b \otimes b') = Y(a,z)b \otimes Y(a',z)b'$ for $a,b \in V$, and $a',b' \in W$.
\item $\unit_{V \otimes^A W} = \unit_V \otimes \unit_W$
\item $\omega_{V \otimes^A W} = \omega_V \otimes \unit_W + \unit_V \otimes \omega_W$
\end{enumerate}
\end{defn}

The following theorem describes the results of van Ekeren, M\"oller, and Scheithauer that we need.

\begin{thm} \label{thm:vEMS}
Let $V$ be a holomorphic $C_2$-cofinite vertex operator algebra of CFT type, let $g$ be an automorphism of $V$ of order $n \in \bZ_{\geq 1}$, and let $G = \langle g \rangle$ be the cyclic group generated by $g$.  Suppose the $L_0$-spectrum of the irreducible twisted module $V(g^i)$ is strictly positive whenever $g^i \neq 1$.  Then there is a distinguished element $t \in \bZ/n\bZ$ and a unique assignment of homomorphisms $\phi_i: G \to \Aut V(g^i)$ satisfying the following conditions:
\begin{enumerate}
\item If we let $V^{(i,j)} = \{ v \in V(g^i) | \phi_i(g)v = e(j/n)v \}$, then the set $\{ V^{(i,j)} \}_{i,j \in \bZ/n\bZ}$ forms a complete list of isomorphism classes of irreducible $V^G$-modules.  They are simple currents, whose fusion rules satisfy $V^{(i,j)} \boxtimes V^{(k,l)} \cong V^{(i+k,j+l+c_{2t}(i,k))}$, where $c_{2t}$ is the ``add with carry'' 2-cocycle $\bZ/n\bZ \times \bZ/n\bZ \to \bZ/n\bZ$ given by $c_{2t}(i,k) = \begin{cases} 0 & i_n + k_n < n \\ 2t & i_n + k_n \geq n \end{cases}$.  Here, $i_n$ and $k_n$ denote the unique elements of $\{0,\ldots,n-1\}$ to which $i$ and $k$ reduce modulo $n$, respectively, and we let $D$ denote the central extension of $\bZ/n\bZ$ by $\bZ/n\bZ$ defined by the 2-cocycle $c_{2t}$.
\item There exists an abelian intertwining algebra ${}^g V \cong \bigoplus_{i \in \bZ/n\bZ} V(g^i) \cong \bigoplus_{i,j \in \bZ/n\bZ} V^{(i,j)}$, whose multiplication is compatible with the fusion rules on $V^{(i,j)}$, graded by the abelian group $D$, with $q_\Omega(i,j) = -\frac{ij}{n} - \frac{i_n^2 t_n}{n^2}$.  Furthermore,  ${}^g V$ is even, i.e., $q_\Delta(i,j) = -q_\Omega(i,j)$.
\end{enumerate}
Furthermore, if we write
\[ \begin{aligned}
T(v,i,j,\tau) &= \Tr(o(v) \phi_i(g^j) q^{L_0-c/24}|V(g^i))\\
T_{(i,j)}(v;\tau) &= \Tr(o(v) q^{L_0-c/24}|V^{(i,j)})
\end{aligned}\]
for all $v \in V^G$, $i,j \in \bZ/n\bZ$ and $q = e(\tau)$, where $o(v)$ is the zero-mode map (which is $v_{wt(v)-1}$ for homogeneous $v$), we obtain holomorphic functions on $\fH$ that satisfy the following modular transformation rules:
\[ \begin{aligned}
T(v,i,j,\tau+1) &= e(\frac{i_n^2 t_n}{n^2} - \frac{c}{24}) T(v,i,j+i,\tau) \\
T(v,i,j,\frac{-1}{\tau}) &= \tau^{wt[v]} e(\frac{-2t i_n j_n}{n^2}) T(v,j,-i,\tau) \\
T_{(i,j)}(v;\tau+1) &= e(\frac{ij}{n} + \frac{i_n^2 t_n}{n^2} - \frac{c}{24}) T_{(i,j)}(v;\tau) \\
T_{(i,j)}(v;\frac{-1}{\tau}) &= \frac{1}{n} \tau^{wt[v]} \sum_{k,l \in \bZ/n\bZ} e(-\frac{kj+il}{n}) e(\frac{-2ti_nk_n}{n^2}) T_{(k,l)}(v;\tau)
\end{aligned} \]
where $wt[v]$ denotes the conformal weight with respect to Zhu's bracket grading \cite{Z96}.
\end{thm}
\begin{proof}
We point out where the specific claims are given in \cite{vEMS}.  The identification of eigenspaces with irreducible $V^G$-modules is Proposition 5.2, and the fact that they are simple currents is Proposition 5.6.

The fusion rules, conformal weights and $S$-matrix entries are given in Theorem 5.12.  The modular transformation rules for $T(v,i,j,\tau)$ and $T_{(i,j)}(v;\tau)$ then follow from the identification of $S$-matrix entries and the conformal weights of modules.

The existence and properties of the abelian intertwining algebra are given in Theorem 4.1.
\end{proof}

We now show that this theorem can be applied to the object of our interest.  Recall that an element $g$ in the Monster is called Fricke if its McKay-Thompson series $T_g(\tau)$ is proportional to $T_g(-1/N\tau)$ for some $N$.  Note that this is a definition that depends on the construction of the moonshine module $V^\natural$ in \cite{FLM88}.  By Borcherds's proof of the Monstrous Moonshine Conjecture \cite{B92}, we have the following necessary and sufficient conditions for $g$ to be Fricke: $T_g$ has a pole at zero, or $T_g(\tau)$ is equal to $T_g(-1/N\tau)$ for some $N$.  The Fricke property is conjugation-invariant, since the McKay-Thompson series is a trace.  The conjectural enumeration of functions in \cite{CN79} (proved to be valid by Borcherds) shows that 120 of the 171 distinct McKay-Thompson series are Fricke, and 143 of the 194 conjugacy classes in the Monster are Fricke.

\begin{thm} \label{thm:Monster-voa-is-well-behaved}
The Monster vertex operator algebra $V^\natural$ is holomorphic, $C_2$-cofinite, and of CFT type.  For any nontrivial element $g$ of the Monster, the $L_0$-spectrum of $V(g)$ is strictly positive.  That is, the conditions of Theorem \ref{thm:vEMS} are satisfied for any automorphism of $V^\natural$.
\end{thm}
\begin{proof}
The holomorphic property is proved in \cite{D94}, the $C_2$-cofinite property is proved in \cite{DLM00}, and CFT type is clear from the construction \cite{FLM88}.  By Theorem 13.1 of \cite{DLM00}, the character of $V^\natural(g)$ is a constant multiple of $T_g(-1/\tau)$, where $T_g(\tau) = Tr(gq^{L_0-1}|V^\natural)$ is the McKay-Thompson series.  If $T_g$ is regular at $\tau = 0$ (i.e., $g$ is non-Fricke), then the lowest-weight space of $V(g)$ has $L_0$-eigenvalue at least 1.  If $T_g$ has a pole at $\tau = 0$ (i.e., $g$ is Fricke), then the pole has order $1/N$, where $N$ is the level of $T_g$, and the lowest-weight space of $V^\natural(g)$ has $L_0$-eigenvalue $1-1/N$.  We conclude that as long as $g$ is nontrivial, the lowest weight space has positive $L_0$-eigenvalue.
\end{proof}

We may then determine the precise graded dimension of twisted modules of $V^\natural$ - this eliminates the scalar ambiguity in \cite{DLM00}.

\begin{cor}
For any $g \in \bM$, hypothesis $A_g$ from section 4.4 of \cite{GM2} is satisfied, i.e., $\sum_{n \in \bQ} \dim V^\natural(g)_n q^{n-1} = T_g(-1/\tau)$.
\end{cor}
\begin{proof}
The left side of the equation is $T(\unit,1,0,\tau)$, and the right side is $T(\unit,0,1,-1/\tau)$, as defined in Theorem \ref{thm:vEMS}.  By Theorem \ref{thm:Monster-voa-is-well-behaved}, we may apply Theorem \ref{thm:vEMS} to $V^\natural$ and its twisted modules, so the two quantities are equal.  
\end{proof}

\begin{cor}
If $V = V^\natural$, and $g \in \bM$ has eigengroup $\Gamma_0(n|h)+e_1,e_2,\ldots$ (in the notation of \cite{CN79}), then the distinguished element $t \in \bZ/n\bZ$ given in Theorem \ref{thm:vEMS} is equal to $-n/h$ if $g$ is Fricke and $n/h$ if $g$ is non-Fricke.
\end{cor}
\begin{proof}
The value of $t$ is uniquely determined by the transformation $\tau \mapsto \tau+n$ on the graded dimension of $V^\natural(g)$.  Theorem \ref{thm:vEMS} gives us $T(\unit,1,0,\tau-n) = e(-\frac{t_n}{n}) T(\unit,1,0,\tau)$, and Section 5 of \cite{CN79} gives us the transformation rule $T_g(\tau)|_0 \left(\begin{smallmatrix} 1 & 0 \\ n & 1 \end{smallmatrix} \right) = e(\pm 1/h) T_g(\tau)$, where we have a plus sign if and only if $g$ is Fricke.  The latter M\"obius transformation conjugates to the $\tau \mapsto \tau-n$ transformation on $T(\unit,1,0,\tau)$, so we have $t_n = \mp n/h$.
\end{proof}

\subsection{Unrolling}

In Section 3.1 of \cite{GM2} we introduce vector-valued modular functions $\hat{F}$ and $F$, which are uniquely determined by the $SL_2(\bZ)$ action on McKay-Thompson series.  In section \ref{sect:comparisons}, we will need to match the functions $T(\unit,i,j,\tau)$ with $\hat{F}_{i,j}(\tau)$ and $T_{(i,j)}(\tau)$ with $F_{i,j}(\tau)$ in order to show that certain Lie algebras are isomorphic.  However, this is only possible when $t=0$ given our current setup, because that is the only way the modular transformation rules can match.  We will fix this problem by ``unrolling'' the abelian intertwining algebra into a $\bZ/N\bZ \times \bZ/N\bZ$-graded object in a unique way, for $N$ any multiple of $\frac{n^2}{(n,t)}$.  By passing to the unrolled object, the modular transformation rules match in all cases.

\begin{lem} \label{lem:pullback-algebra}
Let $V_A = \bigoplus_{a \in A} V_a$ be an abelian intertwining algebra with cocycle $(F_A,\Omega_A)$.  Let $(B,q)$ be an abelian group with $\bC/\bZ$-valued quadratic form, and let $\pi: (B,q) \to (A,(a \mapsto \Omega_A(a,a)))$ be the quotient by a null subgroup $I \subset B^\perp$.  Then there is a unique ``pullback'' abelian intertwining algebra $W_B = \bigoplus_{b \in B} W_b$ satisfying the following properties:
\begin{enumerate}
\item $W_b =  \begin{cases} V_a & b \in B, \pi(b)=a \\ 0 & b \notin B \end{cases}$
\item The multiplication map $W_b \otimes W_{b'} \to W_{b+b'}\{z\}$ is given by the multiplication $V_a \otimes V_{a'} \to V_{a+a'}\{z\}$ from $V_A$ when $b, b' \in B$, $\pi(b)= a$, and $\pi(b') = a'$.
\end{enumerate}
The cocycle $(F_B, \Omega_B)$ attached to $W_B$ is precisely the pullback of the cocycle $(F_A,\Omega_A)$ attached to $V_A$.
\end{lem}
\begin{proof}
The fact that $W_B$ satisfies the axioms of an abelian intertwining algebra follows straightforwardly from our definition of the multiplication operation - the Jacobi identity for $W_B$ reduces immediately to the Jacobi identity for $V_A$.
\end{proof}

\begin{lem} \label{lem:extension-by-zero}
If $i: (A,q_A) \to (B,q_B)$ is a quadratic embedding (i.e., $i$ is an injective homomorphism of abelian groups, and $q_B \circ i = q_A$), and $V_A$ is an $(A,q_A)$-graded abelian intertwining algebra, then there is a unique ``extension by zero'' abelian intertwining algebra $W_B = \bigoplus_{b \in B} W_b$ satisfying the following properties
\begin{enumerate}
\item $W_b =  \begin{cases} V_a & b = i(a) \\ 0 & b \notin i(A) \end{cases}$
\item The multiplication map $W_b \otimes W_{b'} \to W_{b+b'}\{z\}$ is given by the multiplication $V_a \otimes V_{a'} \to V_{a+a'}\{z\}$ from $V_A$ when $b = i(a)$, and $b' = i(a')$.  For all other values of $b$ and $b'$, at least one of the input spaces is zero, so multiplication is trivial.
\end{enumerate}
The cocycle $(F_B, \Omega_B)$ attached to $W_B$ is uniquely determined by $(F_A,\Omega_A)$ up to a coboundary that vanishes identically on $i(A)$.
\end{lem}
\begin{proof}
Omitted.
\end{proof}

\begin{defn}
Given a diagram $(A,q) \overset{\pi}{\twoheadleftarrow} (A',q') \overset{i}{\hookrightarrow} (A'',q'')$ of quadratic spaces, we define the unrolling functor $AIA^c_{(A,q)} \to AIA^c_{(A'',q'')}$ as the composite of the pullback given in Lemma \ref{lem:pullback-algebra} and the extension by zero given in Lemma \ref{lem:extension-by-zero}.
\end{defn}

The following proposition shows that we can make the modular transformation rules in Theorem \ref{thm:vEMS} somewhat cleaner by unrolling.

\begin{prop} \label{prop:unrolled-algebra}
Let $V$ be a holomorphic $C_2$-cofinite vertex operator algebra of CFT type, let $g$ be an automorphism of $V$ of order $n \in \bZ_{\geq 1}$, such that the $L_0$-spectrum of the irreducible twisted module $V(g^i)$ is strictly positive whenever $g^i \neq 1$.  Let $t \in \bZ/n\bZ$
be the distinguished element given by Theorem \ref{thm:vEMS}, and let $N = \frac{r n^2}{(n,t)}$ for some $r \in \bZ_{\geq 1}$.  Finally, let $B$ denote the group $\bZ/N\bZ \times \bZ/N\bZ$ with quadratic form $(a,b) \mapsto e(\frac{ab}{N})$.  Then the subgroup $B' \subset B$ generated by $(0, \frac{rn}{(n,t)})$ and $(1,\frac{t_n r}{(n,t)})$ admits a quadratic-form-preserving quotient map $B' \to D$ by $a(1,\frac{t_n r}{(n,t)}) + b(0, \frac{rn}{(n,t)}) \mapsto (a_n,b_n+2t_n\lfloor a/n \rfloor)$, and there exists a unique set of homomorphisms $\{ \psi_a: \bZ/N\bZ \to \Aut V(g^a) \}_{a = 0}^{N-1}$ such that the abelian intertwining algebra ${}^g_N W \cong \bigoplus_{(a,b) \in B} W^{(a,b)} \cong \bigoplus_{a=0}^{N-1} V(g^a)$ unrolled from ${}^gV$ satisfies the following conditions:
\begin{enumerate}
\item For each $a,b \in \bZ/N\bZ$, we have $W^{(a,b)} = \{ v \in V(g^a) | \psi_a(1)v = e(\frac{b}{N})v \}$, and when $W^{(a,b)} \neq 0$, it is a simple current (in particular, an irreducible module) for the fixed-point algebra $V^g$.
\item For any $a,b \in \bZ/N\bZ$, we have the equality $\psi_a(b) = \phi_a(g^b) e(b\frac{a_n t_n}{n^2} - b\frac{t_n \lfloor a/n \rfloor}{n})$ of automorphisms of $V(g^a)$.
\item If we define $T(a,b,\tau) = \Tr( \psi_a(b) q^{L_0-c/24}|V(g^a))$ and $T_{(a,b)}(\tau) = \Tr( q^{L_0-c/24}|W^{(a,b)})$ for all $a,b \in \bZ/N\bZ$ and $q = e(\tau)$, then we obtain holomorphic functions on $\fH$ that satisfy the following modular transformation rules:
\[ \begin{aligned}
T(a,b,\tau+1) &= e(-\frac{c}{24}) T(a,b+a,\tau) \\
T(a,b,\frac{-1}{\tau}) &= T(b,-a,\tau) \\
T_{(a,b)}(\tau+1) &= e(\frac{ab}{N} - \frac{c}{24}) T_{(a,b)}(\tau) \\
T_{(a,b)}(\frac{-1}{\tau}) &= \frac{1}{N} \sum_{p,q \in \bZ/n\bZ} e(-\frac{bp+aq}{N}) T_{(p,q)}(\tau)
\end{aligned} \]
In particular (following \cite{GM2} Lemma 3.6), if $c$ is a multiple of 24, then the collection $\{ T_{(a,b)} \}_{a,b \in \bZ/N\bZ}$ forms a vector-valued modular function of type $\rho_M$ for $M = I\!I_{1,1}(N)$, and the collection $\{T(a,b,\tau) \}_{a,b \in \bZ/N\bZ}$ describes a function on the moduli space $\cM_{Ell}^{\bZ/N\bZ}$ of elliptic curves with $\bZ/N\bZ$-torsors.
\end{enumerate}
\end{prop}
\begin{proof}
We first check that $B' \to D$ preserves the quadratic form: $q_{B'}(a(1,\frac{t_n r}{(n,t)}) + b(0, \frac{rn}{(n,t)})) = \frac{ab}{n} + \frac{a^2 t_n}{n^2} + \bZ$, while $q_D(a_n,b_n+2t_n\lfloor a/n \rfloor) = \frac{a_nb_n + 2a_n t_n\lfloor a/n \rfloor}{n} + \frac{a_n^2 t_n}{n^2} + \bZ$.  Because $a = n\lfloor a/n \rfloor + a_n$, we have $\frac{a^2 t_n}{n^2} + \bZ = \frac{a_n^2 t_n}{n^2} + \frac{2a_n t_n \lfloor a/n \rfloor}{n} + \bZ$, so the two expressions agree.

We therefore have a well-defined unrolling functor, and an abelian intertwining algebra ${}^g_N W$ whose $B$-homogeneous components and multiplication maps are either pulled back from ${}^g V$ or zero.  The homomorphisms $\psi_a$ are uniquely determined by the $B$-grading on ${}^g_N W$.  The simple current property follows from the first listed claim in \ref{thm:vEMS}.

We now check that $\psi_a(b) = \phi_a(g^b) e(b\frac{a_n t_n}{n^2} - b\frac{t_n \lfloor a/n \rfloor}{n})$ on $V(g^a)$.  For any $a(1,\frac{t_n r}{(n,t)}) + j(0, \frac{rn}{(n,t)}) \in B'$, we have $\psi_a(1)|_{W^{a(1,\frac{t_n r}{(n,t)}) + j(0, \frac{rn}{(n,t)})}} = e(\frac{at_n}{n^2} + \frac{j}{n})$.  The module is the pullback of $V^{(a_n, j_n + 2t_n\lfloor a/n\rfloor)}$, where $\phi_a(g)$ acts as $e(\frac{j_n + 2t_n\lfloor a/n \rfloor}{n})$.  Dividing the two answers, and taking the $b$th power yields the answer we want.

It remains to consider the modular transformation properties.  We reduce the first two equations to the last two equations by noting that $\{T(a,b,\tau)\}$ and $T_{(a,b)}(\tau)$ are related by a discrete Fourier transform.  That is, the equations
\[ \begin{aligned}
T(a,b,\tau) &= \sum_{k \in \bZ/N\bZ} e\left(\frac{bk}{N}\right) T_{(a,k)}(\tau) \\
T_{(a,b)}(\tau) &= \frac{1}{N} \sum_{k \in \bZ/N\bZ} e\left(\frac{-bk}{N} \right) T(a,k,\tau)
\end{aligned} \]
allow us to pass from one equation to the other - this is essentially Lemma 3.3 of \cite{GM2} but we multiply by the additional character $\left(\begin{smallmatrix} 1 & 1 \\ 0 & 1 \end{smallmatrix} \right) \mapsto e(-c/24)$, $\left(\begin{smallmatrix} 0 & -1 \\ 1 & 0 \end{smallmatrix} \right) \mapsto 1$ of $SL_2(\bZ)$.

Since $W^{(a(1,\frac{t_n r}{(n,t)}) + b(0, \frac{rn}{(n,t)}))} = V^{(a_n,b_n+2t_n\lfloor a/n \rfloor)}$, we have
\[ T_{(a(1,\frac{t_n r}{(n,t)}) + b(0, \frac{rn}{(n,t)}))}(\tau) = T_{(a_n,b_n+2t_n\lfloor a/n \rfloor)}(\unit,\tau) \]
for all $a,b \in \bZ/N\bZ$, and $T_{(a,b)}(\tau) = 0$ for $(a,b) \notin B'$.  By our computation of the quadratic form in the first paragraph of this proof, we find that
\[ \begin{aligned}
T_{(a(1,\frac{t_n r}{(n,t)}) + b(0, \frac{rn}{(n,t)}))}(\tau + 1) &= T_{(a_n,b_n+2t_n\lfloor a/n \rfloor)}(\unit,\tau+1) \\
&= e\left(\frac{a_n(b_n+2t_n\lfloor a/n \rfloor)}{n} + \frac{a_n^2 t_n}{n^2} - \frac{c}{24}\right) T_{(a_n,b_n+2t_n\lfloor a/n \rfloor)}(\unit,\tau) \end{aligned}\]
Then $\frac{a_n(b_n+2t_n\lfloor a/n \rfloor)}{n} + \frac{a_n^2 t_n}{n^2} = \frac{ab}{n} + \frac{a^2 t_n}{n^2} = \frac{abrn + a^2t_n r}{N(n,t)} = \frac{a\left(\frac{at_n r}{(n,t)} + \frac{brn}{(n,t)}\right)}{N}$, so we obtain
\[ T_{(a(1,\frac{t_n r}{(n,t)}) + b(0, \frac{rn}{(n,t)}))}(\tau + 1) = e\left(\frac{a\left(\frac{at_n r}{(n,t)} + \frac{brn}{(n,t)}\right)}{N} - \frac{c}{24}\right) T_{(a(1,\frac{t_n r}{(n,t)}) + b(0, \frac{rn}{(n,t)}))}(\tau). \]
This proves the third equation.

For the fourth equation, we note that by Theorem \ref{thm:vEMS}, the $S$-matrix entries $S_{(i,j),(k,l)}$ for ${}^gV$ are given by $\frac{1}{n} e(b_\Omega((i,j),(k,l)))$, where $b_\Omega$ is $-1$ times the inner product induced by $q_D$.  Since the map $B' \to D$ preserves the quadratic form, hence $-1$ times the $\bQ/\bZ$-valued inner product, if we write $b'_\Omega$ for the pullback of $b_\Omega$, we have
\[ \begin{aligned}
T_{(a(1,\frac{t_n r}{(n,t)}) + b(0, \frac{rn}{(n,t)}))}(\frac{-1}{\tau}) &= T_{(a_n,b_n+2t_n\lfloor a/n \rfloor)}(\unit, \frac{-1}{\tau}) \\
&= \frac{1}{n} \sum_{x \in D} e(b_\Omega((a_n,b_n+2t_n\lfloor a/n \rfloor),x)) T_x(\unit,\tau) \\
&= \frac{1}{N} \sum_{y \in B'} e(b'_\Omega((a,\frac{at_n r + brn}{(n,t)}),y)) T_y(\tau) \\
&= \frac{1}{N} \sum_{p,q \in \bZ/n\bZ} e\left(-\frac{\frac{at_n r + brn}{(n,t)}p+aq}{N}\right) T_{(p,q)}(\tau),
\end{aligned}\]
so the fourth equation holds for $(a,b) \in B'$.  For $(a,b) \notin B'$, we need a vanishing result, and it suffices to show that $\sum_{p,q \in \bZ/n\bZ} e(-\frac{bp+aq}{N}) T_{(p,q)}(\tau) = 0$.  This sum reduces to a sum over $B'$, and it suffices to show that for any $x \in D$, the sum over preimages of $x$ in $B'$ vanishes.  The kernel of $B' \to D$ is generated by $(n,\frac{-nrt_n}{(n,t)}) = n(1,\frac{t_n r}{(n,t)}) -2t_n (0, \frac{rn}{(n,t)})$, so we fix $p(1,\frac{t_n r}{(n,t)}) + q(0, \frac{rn}{(n,t)}) \in B'$ and compute the coefficient on $T_{p(1,\frac{t_n r}{(n,t)}) + q(0, \frac{rn}{(n,t)})}(\unit,\tau)$:
\[ \begin{aligned}
\sum_{k=0}^{N/n-1} &e\left( \frac{b(p+kn) + a(\frac{pt_nr+qrn-knrt_n}{(n,t)})}{N} \right) \\
&= \sum_{k=0}^{N/n-1} e\left( \frac{bp(n,t)+bkn(n,t) + aprt_n + aqrn + aknrt_n}{N(n,t)} \right) \\
&= e\left( \frac{bp(n,t)+ aprt_n + aqrn}{N(n,t)} \right)  \sum_{k=0}^{N/n-1} e\left( k\frac{bn(n,t) + anrt_n}{N(n,t)} \right)
\end{aligned} \]
The last sum vanishes if and only if the numerator $bn(n,t) +anrt_n$ is nonzero modulo $N(n,t)$, and by fixing $a$, a short calculation shows that this is precisely when $(a,b) \notin B'$.
\end{proof}

Generalized Moonshine for cyclic groups was proved in \cite{DLM00} up to a scalar ambiguity.  We can eliminate the ambiguity by using a central extension, essentially given by unrolling the corresponding abelian intertwining algebra.

\begin{defn} \label{defn:Fg}
For any $g \in \bM$, and let $N$ be any positive integer such that the McKay-Thompson series $T_g(\tau) = \Tr(g q^{L_0-1}|V^\natural)$ is $\Gamma_0(N)$-invariant. By \cite{GM2} Corollary 3.25, we may define the vector-valued function $\hat{F}^g = \{\hat{F}^g_{i,j}\}_{i,j=0}^{N-1}$ by $\hat{F}^g_{i,j} = T_{g^{(i,j,N)}}(\tau)|_0 A$ for $A = \left(\begin{smallmatrix} * & * \\ \frac{i}{(i,j,N)} & \frac{j}{(i,j,N)} \end{smallmatrix} \right) \in SL_2(\bZ/\frac{N}{(i,j,N)}\bZ)$.  Similarly, we may define the vector valued function $F^g_{i,j} = \{F^g_{i,j}\}_{i,j=0}^{N-1}$ by the discrete Fourier transform $F^g_{i,j}(\tau) = \frac{1}{N} \sum_{k \in \bZ/N\bZ} e\left(\frac{-jk}{N} \right) \hat{F}^g_{i,k}(\tau)$.  This is a vector-valued modular function for the Weil representation $\rho_{I\!I_{1,1}(1/N)}$.  We write $c^g_{i,j}(n)$ for the $q^n$ coefficient of $F^g_{i,j}$.
\end{defn}

\begin{cor}
For any $g \in \bM$, hypothesis $B_g$ from section 4.4 of \cite{GM2} is satisfied, i.e., if $T_g(\tau)$ has level $N$, then there are actions $\{ \psi_i \}_{i=0}^{N-1}$ of $\bZ/N\bZ$ on the twisted modules $V^\natural(g^i)$, compatible with the action of $\langle g \rangle$ on $V^\natural$, such that $\Tr(\psi^i(j) q^{L_0-1} | \dim V^\natural(g^i)) = \hat{F}^g_{i,j}(\tau)$, and $\psi_i(i) = e(L_0)$.
\end{cor}
\begin{proof}
The actions $\psi_i$ given in Proposition \ref{prop:unrolled-algebra} satisfy these identities, and $V^\natural$ satisfies the hypotheses of that proposition by Theorem \ref{thm:Monster-voa-is-well-behaved}.
\end{proof}

\subsection{Adding a spacetime torus}

In order to produce a Lie algebra from a quantization functor, we need a conformal vertex algebra of central charge 26.  Our abelian intertwining algebras ${}^g V$ and ${}^g_N V$ have central charge 24, so we have to increase the central charge by two and cancel the braiding.  Our functors amount to taking a graded tensor product with the abelian intertwining algebra attached to a well-chosen 2-dimensional hyperbolic lattice.

\begin{defn}
Let $L$ be an integer lattice of signature $(1,1)$, and let $(D,q)$ be the discriminant form $L^\vee/L$.  We define the ``add a torus'' functor $AT_L$ from $AIA^c_{(D,q)}$ to the category of $L$-graded conformal vertex algebras of central charge $c+2$ as follows:
\begin{enumerate}
\item For any object $W = \bigoplus_{a \in D} W_a$ in $AIA^c_{(D,q)}$ with abelian 3-cocycle $(F,\Omega)$, we choose an abelian 3-coboundary to endow $V_{L^\vee(-1)}$ with the abelian 3-cocycle $(\frac{1}{F}, \frac{1}{\Omega})$, and take the graded tensor product $W \otimes^D V_{L^\vee(-1)} = \bigoplus_{a \in D} W_a \otimes V_{L_{-1} + a}$.
\item For any isomorphism $W \to W'$ in $AIA^c_{(D,q)}$, the corresponding isomorphism of $L^\vee$-graded conformal vertex algebras is given by taking the tensor product.
\end{enumerate}
For any $\lambda \in L^\vee$, the $\lambda$-graded piece is the tensor product $W_{\lambda+L} \otimes \pi_\lambda^{L \otimes \bC}$.
\end{defn}

\begin{lem}
Let $V$ be a holomorphic $C_2$-cofinite vertex operator algebra of central charge $c$, let $g$ be an automorphism of order $n$ such that $V(g^i)$ has strictly positive $L_0$-spectrum for $g^i \neq 1$, and let ${}^gV$ be the abelian intertwining algebra given in Theorem \ref{thm:vEMS}.  Then there is a lattice $L$ of signature $(1,1)$ and an abelian intertwining algebra $V_{L^\vee(-1)}$ whose abelian 3-cocycle cancels that of ${}^gV$.
In particular, adding a torus is well-defined on ${}^gV$, and yields an $L^\vee(-1)$-graded conformal vertex algebra of central charge $c+2$.
\end{lem}
\begin{proof}
Let $L$ be the lattice identified in \cite{vEMS} by its Gram matrix $\left( \begin{smallmatrix} -2t_n & n \\ n & 0 \end{smallmatrix} \right)$.  Then the quadratic form on $L^\vee/L$ is precisely $q_\Omega$ for ${}^g V$.  Thus, there exists a suitable coboundary for $V_{L^\vee(-1)}$ such that the resulting abelian 3-cocycle cancels that of ${}^gV$.
\end{proof}

\begin{prop} \label{prop:adding-a-torus-commutes-with-unrolling}
Adding a torus 2-commutes with both unrolling against hyperbolic lattice embeddings and twisting against abelian 3-coboundaries.  That is, different choices of unrolling or coboundaries on an abelian intertwining algebra yield isomorphic conformal vertex algebras.
\end{prop}
\begin{proof}
The claim about coboundaries follows from the fact that if one twists against a coboundary for the object in $AIA^c_{(D,q)}$, the opposite coboundary is chosen on $V_{L^\vee(-1)}$.  Thus, the graded tensor product is unchanged.

Suppose we are given an embedding $L \subset M$ of integral lattices of signature $(1,1)$, and let $(A,q_A) = M^\vee/M$ and $(B,q_B) = L^\vee/L$.  We have the diagram $M^\vee/M \twoheadleftarrow M^\vee/L \hookrightarrow L^\vee/L$ of morphisms of quadratic groups, so the unrolling functor yields an abelian intertwining algebra on $L^\vee/L$ that is supported on $M^\vee/L$.  Adding a torus to the $A$-graded object yields an $M^\vee(-1)$-graded conformal vertex algebra, while adding a torus to the $B$-graded object yields an $L^\vee(-1)$-graded conformal vertex algebra that is supported on the sublattice $M^\vee(-1)$.  Furthermore, the homogeneous components are isomorphic, and the multiplication laws match, since the multiplication law on the $L^\vee(-1)$-graded vertex algebra is given by a tensor product of a lattice algebra with a multiplication rule pulled back from the $A$-graded abelian intertwining algebra.
\end{proof}

\begin{defn}
For any $g \in \bM$, we write ${}^gV^\natural$ for the abelian intertwining algebra in $AIA^{24}_{q_D}$ given by Theorem \ref{thm:vEMS} and Theorem \ref{thm:Monster-voa-is-well-behaved}.  We write ${}^g_N V^\natural$ for the ``unrolled'' abelian intertwining algebra in $AIA^{24}_{q_B}$ given by Proposition \ref{prop:unrolled-algebra}.
\end{defn}

Recall that by forgetting some multiplicative structure, we may view ${}^g V^\natural$ as a direct sum of irreducible $g^i$-twisted $V^\natural$-modules, as $g^i$ ranges over the cyclic group generated by $g$.  We note that similarly, ${}^g_N V^\natural$ can be viewed as $\bigoplus_{i=0}^{N-1} V^\natural(g^i)$.

\begin{cor} \label{cor:adding-a-torus-to-Monster}
For any $g \in \bM$, and any multiple $N$ of the level of $T_g(\tau)$, any choice of lattice embedding $I\!I_{1,1}(N) \hookrightarrow L$ (for $L^\vee/L$ the discriminant form for ${}^gV^\natural$) induces an isomorphism $AT_L({}^gV^\natural) \cong AT_{I\!I_{1,1}(N)} ({}^g_NV^\natural)$ of $I\!I_{1,1}(-1/N)$-graded conformal vertex algebras of central charge 26.
\end{cor}
\begin{proof}
Theorem \ref{thm:Monster-voa-is-well-behaved} implies Proposition \ref{prop:adding-a-torus-commutes-with-unrolling} is applicable, and the result immediately follows.
\end{proof}

\subsection{Projective action of centralizer}

We show that for any finite order automorphism $g$ of a holomorphic $C_2$-cofinite vertex operator algebra $V$ of CFT type, there is a natural action by a central extension of the centralizer of $g$ by homogeneous automorphisms on the abelian intertwining algebra ${}^gV$, the unrolled object ${}^g_N W$, and the conformal vertex algebra given by adding a torus.

\begin{lem} \label{lem:enumerating-lifts}
Let $V$ be a holomorphic $C_2$-cofinite vertex operator algebra of CFT type, and let $g$ be an automorphism of order $n$ such that $V(g^i)$ has strictly positive $L_0$-spectrum for $g^i \neq 1$.  Let $h \in C_{\Aut V}(g)$.  Then the set of homogeneous automorphisms of the abelian intertwining algebra ${}^gV$ that restrict to $h$ on $V$ has precisely $n$ elements, and a natural simply transitive action of $\mu_n(\bC)$.
\end{lem}
\begin{proof}
If $\tilde{h}$ is a homogeneous automorphism of the abelian intertwining algebra ${}^gV$ that restricts to $h$ on $V$, then for each $g^i \in \langle g \rangle$, the restriction of $\tilde{h}$ to $V(g^i)$ is an element of $\Aut V(g^i)$ over $h$.  Now, suppose we choose arbitrary elements $h_i \in \Aut V(g^i)$ over $h$ for all $i$.  By Remark \ref{rem:automorphisms-of-twisted-modules}, the set of lifts of $h$ to an automorphism of $V(g^i)$ has a simply transitive action of $\bC^\times$, so we can measure the failure of $\{h_i\}_{i=0}^{n-1}$ to form an automorphism of ${}^gV$ by setting $\lambda_{i,j} \in \bC^\times$ to be the unique constant such that $Y(h_i a,z)h_j b = \lambda_{i,j} h_{i+j} Y(a,z)b$ for all $a \in V(g^i)$ and $b \in V(g^j)$.  By the associativity and commutativity of fusion, $\lambda = \{ \lambda_{i,j}\}$ is a normalized symmetric 2-cocycle on $\bZ/n\bZ$ with values in $\bC^\times$, hence it defines an abelian central extension $E$ of $\bZ/n\bZ$ by $\bC^\times$.  Because $\bC^\times$ is divisible, the defining exact sequence splits, and $E \cong \bZ/n\bZ \oplus \bC^\times$.  In particular, the cocycle $\lambda$ is a coboundary, and the set of 1-cochains in the preimage of $\lambda$ under the differential is a torsor under the group $\Hom(\bZ/n\bZ, \bC^\times) \cong \mu_n(\bC)$ of 1-cocycles.  Concretely, this means any normalized symmetric 2-cocycle $\lambda$ is attainable by a suitable choice of $\{h_i\}$, and those $\{h_i\}$ that yield an automorphism of ${}^gV$ (namely those for which $\lambda = 1$) form a torsor under $\mu_n(\bC)$, where the action is by rescaling: For each $\zeta \in \mu_n(\bC)$, one replaces $h_i$ with $\zeta^i h_i$.
\end{proof}

\begin{prop} \label{prop:central-extension}
Let $V$ be a holomorphic $C_2$-cofinite vertex operator algebra of CFT type, and let $g$ be an automorphism of order $n$ such that $V(g^i)$ has strictly positive $L_0$-spectrum for $g^i \neq 1$.  Then there is a central extension $\widetilde{C_{\Aut V}(g)}$ of $C_{\Aut V}(g)$ by $\mu_n(\bC)$ with a natural action on the abelian intertwining algebra ${}^gV$, the unrolled object ${}^g_N W$, and the conformal vertex algebra given by adding a torus.
\end{prop}
\begin{proof}
By Lemma \ref{lem:enumerating-lifts}, the set of all lifts of elements of $C_{\Aut V}(g)$ is a $\mu_n(\bC)$-torsor over $C_{\Aut V}(g)$, so in order to prove it is a central extension, we need to show that it is closed under multiplication, the torsor structure map is a homomorphism, and all lifts of identity are central.  All lifts of identity are central, because the restriction of a lift of identity to any $V(g^i)$ is a scalar, and all lifts of elements are sums of linear automorphisms of the twisted modules $V(g^i)$.  The remaining claims about the multiplicative structure follow from the following calculation: if $\tilde{h} = \{h_i \}$ and $\tilde{k} = \{k_i \}$ are lifts of $h, k \in C_{\Aut V}(g)$, then for all $a \in V(g^i), b \in V(g^j)$,
\[ \begin{aligned}
h_{i+j} k_{i+j} Y(a,z)b &= h_{i+j} Y(k_i a,z) k_j b \\
&= Y(h_i k_i u,a) h_j k_j vb
\end{aligned} \]
Therefore, $\tilde{h}\tilde{k}$ is a lift of $hk$ to ${}^gV$.

We thus obtain a natural action of $\widetilde{C_{\Aut V}(g)}$ on ${}^gV$.  Since unrolling and adding a torus are functors, we get natural actions of the same group on the resulting objects.
\end{proof}

\begin{lem}
For any $N$ divisible by the level of $T_g(\tau)$, the $I\!I_{1,1}(1/N)$-homogeneous conformal vertex algebra isomorphism between $AT_L({}^g V^\natural)$ and $AT_{I\!I_{1,1}(N)}({}^g_N V^\natural)$ given in Corollary \ref{cor:adding-a-torus-to-Monster} can be made $\widetilde{C_{\bM}(g)}$-equivariant.
\end{lem}
\begin{proof}
Promotion to $\widetilde{C_{\bM}(g)}$-equivariance follows from the structure of the unrolling functor - the corresponding graded pieces are identified as $\widetilde{C_{\bM}(g)}$-modules, and the multiplication maps are equivariant.
\end{proof}

We summarize the results of this section.

\begin{thm} \label{thm:properties-we-need}
Let $g$ be an automorphism of $V^\natural$, i.e., an element of the Monster simple group $\bM$, and suppose the McKay-Thompson series $T_g(\tau)$ has level $N$.  Let $B = \bZ/N\bZ \times \bZ/N\bZ$ with the quadratic form $q_B(a,b) = e(ab/N)$.  Then there is a central extension $\widetilde{C_{\bM}(g)}$ of the centralizer $C_{\bM}(g)$ by $\mu_{|g|}(\bC)$ equipped with a distinguished lift $\tilde{g}$ of $g$, and an abelian intertwining algebra ${}^g_NV^\natural \in AIA^{24}_{q_B}$ with a homogeneous action of $\widetilde{C_{\bM}(g)}$ by autmorphisms, satisfying the following properties:
\begin{enumerate}
\item ${}^g_NV^\natural$ decomposes into $\bigoplus_{a=0}^{N-1} V^\natural(g^a)$ as a direct sum of twisted $V^\natural$-modules.
\item The action of $\tilde{g}$ restricted to $V^\natural(g)$ is given by $e(L_0)$.
\item The $(a,b)$-graded component of ${}^g_NV^\natural$ is $\{ v \in V^\natural(g^a) | \tilde{g}v = e(b/N)v \}$.  This is nonzero if and only if $(a,b) \in B'$ (where $B' \subseteq B$ is a subgroup uniquely determined by the symmetries of $T_g$), and in that case it is a simple current as a $(V^\natural)^g$-module, and it is endowed with a $\widetilde{C_{\bM}(g)}$-action.
\item The graded dimension of the $(a,b)$-graded component ${}^g_NV^\natural$ is given by the component form $F^g_{a,b}(\tau)$ given in Definition \ref{defn:Fg}, and in particular, the coefficient $c^g_{a,b}(n)$ is equal to the dimension of $\{ v \in V^\natural(g^a) | \tilde{g}v = e(b/N)v, L_0 v = (n+1)v\}$.  That is, the vector-valued graded dimension of ${}^g_NV^\natural$ is equal to $F^g$, a vector valued modular function for the Weil representation attached to the lattice $I\!I_{1,1}(N)$.
\end{enumerate}
The conformal vertex algebra $AT_{I\!I_{1,1}(N)}({}^g_N V^\natural)$ has central charge 26, and is graded by $I\!I_{1,1}(-1/N)$, with an action of $\widetilde{C_{\bM}(g)}$ by homogeneous automorphisms.
\end{thm}
\begin{proof}
As before, Theorem \ref{thm:Monster-voa-is-well-behaved} allows us to use the general results for a holomorphic $C_2$-cofinite vertex operator algebra $V$ of CFT type equipped with an automorphism $g$ for which $V(g^i)$ has strictly positive $L_0$-spectrum whenever $g^i \neq 1$.

By Lemma \ref{lem:enumerating-lifts}, a lift of $g$ to an automorphism of ${}^g_NV^\natural$ is uniquely determined by what it does to $V^\natural(g)$, and by the $\tau \mapsto \tau+1$ property given in Proposition \ref{prop:unrolled-algebra}, setting $\psi_1 = e(L_0)$ yields a lift $\tilde{g}$ such that the $(a,b)$-graded component of ${}^g_NV^\natural$ is $\{ v \in V^\natural(g^a) | \tilde{g}v = e(b/N)v \}$.  The claim that the $(a,b)$-component is nonzero if and only if $(a,b) \in B'$ is straightforward from the definition of unrolling, and the simple current assertion is given by the first listed claim in Proposition \ref{prop:unrolled-algebra}.

The claims about graded dimension follow from the properties of the functions $T(a,b;\tau)$ and $T_{(a,b)}(\tau)$ given in Proposition \ref{prop:unrolled-algebra}.  In particular, \cite{GM2} Lemma 3.6 implies the functions $\{T(a,b,\tau) \}_{a,b \in \bZ/N\bZ}$ and $\{F^g_{a,b}(\tau) \}_{a,b \in \bZ/N\bZ}$ on $\cM_{Ell}^{\bZ/N\bZ}$ are uniquely determined by $T(0,m,\tau)$ and $F^g_{0,m}(\tau)$ for $m$ ranging over divisors of $N$, and in both cases, these are given by the McKay-Thompson series $T_{g^m}(\tau)$.  The discrete Fourier transform then gives an equality between $T_{(a,b)}(\tau)$ and $F^g_{a,b}(\tau)$, matching the coefficient $c^g_{a,b}(n)$ with the dimension of the stated eigenspace.

The remaining claims follow from Proposition \ref{prop:central-extension} and Corollary \ref{cor:adding-a-torus-to-Monster}.
\end{proof}

\section{Lie algebras}

In this section we construct the monstrous Lie algebras.  Our construction employs the physical principle first articulated in \cite{L71} and demonstrated in \cite{GT72} and \cite{B72}, that in the critical dimension $d=26$ of bosonic string theory, Virasoro-type Ward identities cancel two full sets of oscillators.  This principle now has at least three methods of realization in physics (cf. \cite{GSW87} chapters 2,3): Covariant quantization, passage to the light-cone gauge (also called ``construction of the transversal space''), and BRST cohomology.  The use of the BRST method for quantizing strings was pioneered in \cite{KO83}.

The use of this cancellation principle in mathematics is also far from new.  For example, BRST cohomology was used in \cite{FGZ86} to compute the character of the space of physical states for a bosonic string propagating in Lorentzian 26-space.  The covariant quantization method is also used in the analysis of three Lie algebras, each of which was named the ``Monster Lie algebra'' upon construction:
\begin{enumerate}
\item The first example is the Kac-Moody Lie algebra whose simple roots are those of the reflection group of the even unimodular Lorentzian lattice $I\!I_{1,25}$.  The existence of a Monster action was suggested in \cite{BCQS84}, and Frenkel \cite{F85} used the no-ghost theorem to bound its root multiplicities by coefficients of the modular form $1/\Delta$.
\item The second example is the Borcherds-Kac-Moody Lie algebra whose real simple roots are those given in the first example, but it has additional imaginary simple roots of multiplicity 24 on positive multiples of the Weyl vector.  This was constructed in \cite{B86}, and is now known as the ``fake Monster Lie algebra''.  In \cite{B90}, Borcherds showed that the root multiplicities saturate the $1/\Delta$ bound given by Frenkel.
\item The third and last example is the generalized Kac-Moody Lie algebra constructed in \cite{B92}.  This is the only example known to have a faithful Monster action.
\end{enumerate}
Supersymmetric variants of this construction have been used, e.g., in \cite{S00}.

\subsection{Virasoro preliminaries}

Recall the Virasoro algebra $vir$ is the complex vector space with a distinguished basis given by symbols $K, \{L_{n} \}_{n \in \bZ}$, with the Lie bracket defined by $[L_{m},L_{n}] = (m-n)L_{m+n} + \frac{m^3-m}{12}\delta_{m+n,0} K$ for all $m,n \in \bZ$, and $[L_{n}, K] = 0$ for all $n \in \bZ$.  We write $vir^+$ for the subalgebra spanned by $\{L_{n}\}_{n \geq 1}$, $vir^-$ for the subalgebra spanned by $\{ L_{n} \}_{n \leq -1}$, and $vir^0$ for the 2-dimensional subalgebra spanned by $L_0$ and $K$.

\begin{defn}
Let $V$ be a representation of the Virasoro algebra, i.e., a complex vector space equipped with operators $\{ L_{n} \}_{n \in \bZ}$ and $K$ that satisfy the Virasoro relations.
\begin{enumerate}
\item We say that $V$ is positive-energy if:
\begin{enumerate}
\item $L_0$ acts diagonalizably, i.e., $V$ splits into a direct sum of $L_0$-eigenspaces.
\item $vir^+$ acts locally nilpotently, i.e., for all $v \in V$, there exists $N > 0$ such that for all sequences $n_1,\ldots,n_k$ of positive integers satisfying $n_1+\cdots + n_k > N$, we have $L_{n_1}\cdots L_{n_k}v = 0$.
\end{enumerate}
\item We say that $V$ has central charge $c \in \bC$ if $Kv = cv$ for all $v \in V$.
\item A bilinear or sesquilinear form $(,)$ on $V$ is Virasoro-invariant if for each $n \in \bZ$, $L_{n}$ is adjoint to $L_{-n}$.
\item We say that $V$ is unitarizable if it admits a positive-definite hermitian Virasoro-invariant form.
\item A vector $v \in V$ is:
\begin{enumerate}
\item primary (or physical) if $L_iv = 0$ for all $i > 0$, i.e., if $vir^+$ acts trivially.
\item spurious if it has the form $L_{-i} w$, for some $w \in V$ and some $i > 0$.
\item null if it is primary and spurious.
\end{enumerate}
We write $P^r$ to denote the subspace of primary vectors $v$ such that $L_0v = rv$.  We may write $P^r_V$ when there is more than one representation under consideration.  We write $null^r_V$ to denote the subspace of null vectors of weight $r$ in $V$.
\end{enumerate}
Given $c,h \in \bC$, we define the Verma module $M_{h,c}$ as the induced module $vir \otimes_{vir^+ \oplus vir^0} \bC_{h,c}$, where $\bC_{h,c}$ is the one dimensional vector space with trivial action of $vir^+$, such that $L_0$ has eigenvalue $h$, and $K$ has eigenvalue $c$.  We write $L_{h,c}$ for the unique irreducible quotient of $M_{h,c}$ with highest weight $h$.
\end{defn}

We note that any Verma module $M_{h,c}$ is positive-energy and admits a canonical invariant bilinear form, called the Shapovalov form, which is nonsingular if and only if $M_{h,c}$ is irreducible (see \cite{FF84} or \cite{KR87}).  In the following lemma, we set $c=24$, because that is the case we will use.  Similar results hold for other central charges in the open interval $(1,25) \subset \bR$.

\begin{lem} \label{lem:unitarizable}
Let $V$ be a positive energy $vir$-module of central charge $c=24$.  Then $V$ is unitarizable if and only if the $L_0$-spectrum of $V$ is non-negative and $L_{-1}$ annihilates the zero weight subspace $V_0 = \Ker L_0$.  If these conditions hold, then $V$ decomposes as an orthogonal direct sum of irreducible $vir$-modules as follows:
\[ V \cong \bigoplus_{h \geq 0} P^h_V \otimes L_{h,24} \cong (V_0 \otimes L_{0,24}) \oplus \bigoplus_{h >0} P^h_V \otimes M(c,h) \]
\end{lem}
\begin{proof}
The Kac determinant formula yields the following unitarizability and irreducibility results (where detailed proofs can be found in the first four sections of \cite{KR87}):
\begin{enumerate}
\item If $h \neq 0$, then $M_{h,24}$ is irreducible (i.e., equal to $L_{h,24}$), and the category of lowest weight $vir$-modules with central charge $24$ and lowest weight $h$ is completely reducible, with $L_{h,24}$ as its unique irreducible object.
\item For the case $h=0$, $M_{0,24}$ has a unique proper submodule $M_{1,24}$, and $L_{0,24}= M_{0,24}/M_{1,24}$.
\item $M_{h,24}$ is unitarizable if and only if $h > 0$.  $L_{h,24}$ is unitarizable if and only if $h \geq 0$.
\end{enumerate}
Given $V$ unitarizable, we choose a Virasoro-invariant Hermitian form.  By self-adjointness of $L_0$, any two $L_0$-eigenspaces with different eigenvalues are orthogonal.  For any coset $h+ \bZ$, we let $V_{h+\bZ}$ denote the subspace spanned by $L_0$-eigenvectors with eigenvalues in $h + \bZ$.  Then any disjoint cosets of $\bZ$ yield orthogonal subspaces of $V$, so we may consider one coset at a time.  Within each coset, the decomposition follows from induction by $L_0$-eigenvalue, noting that within each $L_0$-eigenspace the primary subspace is orthogonal to the spurious subspace.  The necessary and sufficient condition then follows immediately.
\end{proof}

\begin{lem} \label{lem:twisted-modules-unitarizable}
Let $g$ be an automorphism of $V^\natural$.  Then the irreducible $g$-twisted module $V^\natural(g)$ is unitarizable.  If $g \neq 1$, then $V^\natural(g)$ decomposes as a direct sum of Virasoro submodules of the form $M(c,h)$ as $h$ ranges over a multiset of positive rational numbers of the form $\frac{k}{|g|^2}$.
\end{lem}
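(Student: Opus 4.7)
The plan is to verify the hypotheses of Lemma \ref{lem:unitarizable}(4), which will then yield the desired decomposition into Verma modules $M_{h,24}$ with $h > 0$. That $V^\natural(g)$ is a positive-energy Virasoro module of central charge $24$ is automatic from the $g$-twisted module structure: the truncation axiom gives a bounded-below $L(0)$-spectrum and local nilpotence of $vir^+$, while $\omega \in V^\natural$ acts with central charge $24$ on any $V^\natural$-module. The substantive tasks are unitarizability, the denominator bound, and strict positivity of the $L(0)$-spectrum when $g \neq 1$.

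For unitarizability, I would use the positive-definite Hermitian invariant form on $V^\natural$ established in \cite{FLM88}, together with the uniqueness of the irreducible $g$-twisted module (\cite{DLM00} Theorem 10.3). The contragredient twisted module $V^\natural(g)^\vee$ is naturally a $g^{-1}$-twisted module, hence isomorphic to $V^\natural(g^{-1})$, producing a canonical invariant bilinear pairing $V^\natural(g) \otimes V^\natural(g^{-1}) \to \bC$. Composing with an antilinear identification supplied by the real structure on $V^\natural$ yields an invariant Hermitian form on $V^\natural(g)$; positive-definiteness is inherited from the unitarity of $V^\natural$ via the decomposition of $V^\natural(g)$ into irreducible modules over the unitary fixed-point subalgebra $(V^\natural)^{\langle g\rangle}$, each of which also appears as a summand in some $V^\natural(g^k)$ and inherits a unitary structure there.

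For the denominator bound, the monodromy axiom confines the $L(0)$-spectrum of $V^\natural(g)$ to a single coset of $\frac{1}{|g|}\bZ$, and the type classification for $V^\natural$ forces the minimum weight to be $\mp 1/(nh)$ with $n = |g|$ and $h \mid n$. Hence every eigenvalue lies in $\frac{1}{nh}\bZ \subseteq \frac{1}{|g|^2}\bZ$.

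The main obstacle is strict positivity of the $L(0)$-spectrum for $g \neq 1$. The route I would take is through the $SL_2(\bZ)$-modular behavior of graded characters: the trace $\Tr(q^{L(0)-1}\,|\,V^\natural(g))$ is, up to an explicit $g$-dependent prefactor, an $S$-transform of the McKay-Thompson series $Z(1,g,\tau) = q^{-1} + O(q)$, and an analysis of the cusp behavior of $Z(1,g,\tau)$ at $\tau = 0$ (using Fricke invariance for Fricke classes, and holomorphicity at the cusp $0$ for non-Fricke classes) shows that its leading $q$-exponent at $i\infty$ after the $S$-transform is strictly greater than $-1$; equivalently, the minimum weight of $V^\natural(g)$ is strictly positive. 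As a backup, one can rule out a weight-zero vector directly: such a vector would generate, under the $V^\natural$-action and irreducibility of $V^\natural(g)$, an isomorphism of $V^\natural$-modules between $V^\natural$ and $V^\natural(g)$, contradicting the nontrivial twisting. Granted strict positivity, Lemma \ref{lem:unitarizable}(4) supplies the direct sum decomposition into Verma modules $M_{h,24}$ with $h > 0$, and by the denominator analysis each such $h$ lies in $\frac{1}{|g|^2}\bZ_{>0}$.
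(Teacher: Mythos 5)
Your main line of argument --- using \cite{DLM00} modular invariance to identify the character of $V^\natural(g)$ with (a constant multiple of) $T_g(-1/\tau)$, reading off strict positivity of the lowest $L(0)$-eigenvalue from the behavior of $T_g$ at the cusp $\tau=0$ (non-Fricke regular, hence lowest weight $\geq 1$; Fricke pole of order $1/N$, hence lowest weight $1-1/N>0$), and then invoking the last claim of Lemma \ref{lem:unitarizable} to obtain simultaneously unitarizability and the decomposition into Verma modules, with $g=1$ handled by the known unitarity of $V^\natural$ --- is exactly the paper's proof. The separate Hermitian-form construction you sketch is redundant (that lemma already yields unitarizability from positivity of the spectrum), and its positive-definiteness step as written is nearly circular, but nothing in your argument actually depends on it.
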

\begin{proof}
For the case $g=1$, it suffices to show that $V^\natural$ is unitarizable - this is well-known, and follows straightforwardly from its conformal structure.  By Theorem \ref{thm:Monster-voa-is-well-behaved}, if $g \neq 1$, then the lowest weight space of $V^\natural(g)$ has positive $L_0$-eigenvalue, so by Lemma \ref{lem:unitarizable}, we may decompose $V^\natural(g)$ as a sum of unitarizable Verma modules.
\end{proof}

\begin{lem}
Let $\lambda \in \bR^{r,s}$, and let $\pi_\lambda^{r,s}$ denote the corresponding irreducible Heisenberg module, endowed with the Virasoro action arising from the conformal element $\frac12 \sum_\mu a^\mu_{-1} b^\mu_{-1} 1 \in \pi_0^{r,s}$.  Then $\pi_\lambda^{r,s}$ is a positive-energy $vir$-module of central charge 2 that admits a natural Virasoro-invariant Hermitian form for which $b_n^\mu$ is adjoint to $b_{-n}^\mu$, for all $n \in \bZ$ and all vectors $b^\mu \in \bR^{r,s}$.
\end{lem}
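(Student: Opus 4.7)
The plan is to work in the standard Fock-space realization $\pi_\lambda^{r,s} \cong \Sym(t^{-1}V[t^{-1}]) \otimes v_\lambda$, where $V = \bC \otimes \bR^{r,s}$ and $v_\lambda$ is the highest-weight vector characterized by $b_n^\mu v_\lambda = 0$ for $n > 0$ and $b_0^\mu v_\lambda = (b^\mu,\lambda) v_\lambda$ for all $\mu$. Each of the three assertions then reduces to a direct computation using the Heisenberg relations $[b_m^\mu, b_n^\nu] = m\delta_{m+n,0} (b^\mu,b^\nu)$ and the Sugawara-type formula for $L(n)$ coming from the specified conformal vector.

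First, for the positive-energy property: the chosen conformal vector gives $v_\lambda$ the weight $(\lambda,\lambda)/2$, and a monomial $b_{-n_1}^{\mu_1}\cdots b_{-n_k}^{\mu_k} v_\lambda$ acquires weight $(\lambda,\lambda)/2 + \sum_i n_i$. Hence $L(0)$ acts semisimply with eigenvalues in $(\lambda,\lambda)/2 + \bZ_{\geq 0}$ and each eigenspace is finite-dimensional (counted by partitions with labels in a basis of $V$). For local nilpotence of $vir^+$, I would use $[L(0), L(n)] = -nL(n)$: every $L(n)$ with $n \geq 1$ lowers the $L(0)$-eigenvalue by $n$, so any given vector lies in a finite union of weight spaces and is annihilated by a sufficiently long product of positive-index generators.

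Next, I would construct the sesquilinear form. Fix an orthonormal basis $\{b^\mu\}$ of $\bR^{r,s}$ (so $(b^\mu,b^\nu) = \pm \delta_{\mu,\nu}$) and declare the form to be determined by $\langle v_\lambda, v_\lambda \rangle = 1$ together with the desired adjoint relation $(b_n^\mu)^* = b_{-n}^\mu$ for all $n \in \bZ$ and all $\mu$. Inner products of arbitrary monomials are then computed by Wick's rule: commute annihilation operators to the right using the Heisenberg relations until they hit the vacuum, where each contraction contributes $m(b^\mu,b^\nu)$. Well-definedness of the form amounts to the consistency of this commutation procedure, which follows since the proposed adjoint is an involutive anti-automorphism of the Heisenberg algebra. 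The form is Hermitian with respect to the real structure on $\pi_\lambda^{r,s}$ coming from $\bR^{r,s}$; it is in general indefinite when $s > 0$, but this is compatible with the statement as written.

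Finally, to verify $L(n)^* = L(-n)$, I would appeal to the explicit expression
\[ L(n) = \tfrac{1}{2} \sum_{\mu} \sum_{k \in \bZ} :b_{-k}^\mu b_{n+k}^\mu: \]
coming from the conformal vector $\tfrac12 \sum_\mu a^\mu_{-1} b^\mu_{-1} 1$ together with the choice of orthonormal basis. Taking the adjoint of each summand using $(b_j^\mu)^* = b_{-j}^\mu$ sends $:b_{-k}^\mu b_{n+k}^\mu:$ to $:b_{-(n+k)}^\mu b_{k}^\mu:$, and reindexing $k \mapsto -n-k$ recovers the formula for $L(-n)$ term-by-term. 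The only subtlety is that the adjoint preserves the normal-ordering prescription, which holds because the adjoint exchanges creation and annihilation operators while the normal-ordering convention is symmetric under this exchange. The main (very mild) obstacle is bookkeeping for the sign coming from the signature when $s > 0$, but these signs cancel pairwise in the quadratic expressions for $L(n)$.
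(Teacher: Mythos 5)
Your sketch is correct, but it takes a more explicit route than the paper does: the paper's entire proof is the observation that, by irreducibility of $\pi_\lambda^{r,s}$, the stated adjointness conditions pin the Hermitian form down up to normalization, plus a citation to section 2.2 of \cite{FGZ86} for its existence. You instead reconstruct the content of that reference directly: Fock-space realization, boundedness below of the $L(0)$-spectrum for positive energy, definition of the form by $\langle v_\lambda, v_\lambda\rangle = 1$ together with $(b_n^\mu)^* = b_{-n}^\mu$ and Wick contraction, and a term-by-term check of $L(n)^* = L(-n)$ from the Sugawara expansion. This buys self-containedness at the cost of bookkeeping; the paper's remark buys brevity and makes the uniqueness (which your construction only implicitly addresses via the contragredient-type identification) the headline point. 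Two small corrections to your bookkeeping: with a pseudo-orthonormal basis the conformal vector $\frac12\sum_\mu a^\mu_{-1}b^\mu_{-1}1$ uses the \emph{dual} basis, so your expansion should read $L(n) = \frac12\sum_\mu \epsilon_\mu \sum_k :b^\mu_{-k}b^\mu_{n+k}:$ with $\epsilon_\mu = (b^\mu,b^\mu) = \pm 1$; these signs do not ``cancel pairwise'' but simply ride along unchanged under the adjoint, which is all you need. Likewise the normal-ordering issue only arises for the $n=0$ terms (the two factors commute whenever $n \neq 0$), and there a one-line check shows $(b^\mu_{-k}b^\mu_k)^* = b^\mu_{-k}b^\mu_k$, so the argument closes. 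Neither point is a gap, just phrasing to tighten.
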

\begin{proof}
By irreducibility, the conditions on the Hermitian form determine it up to a normalization.  See, e.g., section 2.2 of \cite{FGZ86}.
\end{proof}

\begin{lem}
Let $V$ be a Virasoro representation equipped with a Virasoro-invariant bilinear form $(,)$.
\begin{enumerate}
\item The $L_0$-eigenspaces in $V$ are pairwise orthogonal with respect to the form.
\item $L_{-1}P^0 \subseteq P^1$.  In particular, $L_{-1}P^0 \subseteq null^1_V$.
\item A vector in $P^1$ is orthogonal to all other vectors in $P^1$ if and only if it lies in $null^1_V$.
\end{enumerate}
\end{lem}
\begin{proof}
All claims follow from straightforward calculations.
\end{proof}

There is a stronger notion of invariance for vertex algebras (found, e.g., in \cite{FHL93}).

\begin{defn}
Let $V$ be a conformal vertex algebra.  A bilinear form $(,)$ on $V$ is invariant if for all $u,v,w \in V$, the identity $(Y(u,z)v,w) = (v,Y(e^{zL_1}(-z^{-2})^{L_0}u, z^{-1})w)$ holds.  Equivalently, if $u$ has conformal weight $i$, then the adjoint of the operator $u_n$ is $(-1)^i \sum_{j \geq 0} (L_1^j(u))_{2i-j-n-2}/j!$.
\end{defn}

\begin{lem}
Let $V$ be a conformal vertex algebra.  Then any invariant bilinear form is Virasoro-invariant.
\end{lem}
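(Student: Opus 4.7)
The plan is to derive the Virasoro invariance directly from the definition of invariance by applying it to $u = \omega$, exploiting the standard identity $L(1)\omega = 0$.

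First, I would recall that in a conformal vertex algebra, the Virasoro operators are modes of the conformal vector: writing $m_z(u \otimes v) = \sum_n u_{(n)} v \, z^{-n-1}$, the defining expansion $m_z(\omega \otimes v) = \sum_n L(n) v \, z^{-n-2}$ gives $\omega_{(n)} = L(n-1)$. In particular, $\omega$ has conformal weight $2$, i.e.\ $L(0)\omega = 2\omega$.

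Next, I would verify the standard identity $L(1)\omega = 0$. Since $\omega_{(-1)} \mathbf{1} = \omega$ implies $L(-2)\mathbf{1} = \omega$, we compute $L(1)\omega = L(1)L(-2)\mathbf{1} = [L(1),L(-2)]\mathbf{1} = 3L(-1)\mathbf{1} = 0$, using the creation property $L(-1)\mathbf{1} = 0$. Consequently $L(1)^j \omega = 0$ for every $j \geq 1$.

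With these in hand, I would apply the mode form of the invariance axiom, which the paragraph before the lemma records as: for $u$ of conformal weight $i$, the adjoint of $u_{(n)}$ equals $(-1)^i \sum_{j \geq 0} (L(1)^j u)_{(2i-j-n-2)}/j!$. Specializing to $u = \omega$, we have $i = 2$ and only the $j=0$ term survives, so the adjoint of $\omega_{(n)}$ is $\omega_{(2-n)}$. Translating back via $\omega_{(n)} = L(n-1)$, this says the adjoint of $L(n-1)$ is $L(1-n)$, i.e.\ the adjoint of $L(m)$ is $L(-m)$ for every $m \in \mathbb{Z}$, which is exactly Virasoro invariance.

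There is no real obstacle here: once one knows $L(1)\omega = 0$ and that $\omega$ is a quasi-primary vector of weight $2$, the mode-adjoint formula collapses to a single term. The only thing to be a bit careful about is bookkeeping of the exponent shift between $\omega_{(n)}$ and $L(n-1)$, and confirming that the $L(2)\omega = \frac{c}{2}\mathbf{1}$ term, which is nonzero, does not enter because the adjoint formula involves powers of $L(1)$ only.
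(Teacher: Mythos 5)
Your proposal is correct and follows exactly the paper's route: the paper's proof is simply "evaluate the defining identity with $u$ as the Virasoro vector $\omega$," and your argument is that same evaluation carried out explicitly (using $L(1)\omega=0$, $L(0)\omega=2\omega$, and the mode shift $\omega_{(n)}=L(n-1)$, with the sign $(-1)^2=1$). Nothing further is needed.
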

\begin{proof}
This follows from evaluating the defining identity with $u$ chosen to be the Virasoro vector $\omega$.
\end{proof}

\begin{prop} \label{prop:duality-form}
Let $V$ be a holomorphic $C_2$-cofinite vertex operator algebra of CFT type, and let $g$ be a finite order automorphism of $V$ such that the twisted modules $V(g^i)$ have strictly positive $L_0$-spectrum for all $g^i \neq 1$.  Let $L$ be a hyperbolic lattice such that $L^\vee/L$ has the discriminant form of ${}^gV$, let $M \subset L$ be a hyperbolic sublattice of finite index, and consider the conformal vertex algebra $W$ given by applying $AT_M$ to the $M^\vee/M$-graded abelian intertwining algebra given by unrolling ${}^gV$ (by Proposition \ref{prop:adding-a-torus-commutes-with-unrolling}, this is isomorphic to $AT_L({}^gV)$ viewed as an $M^\vee(-1)$-graded vertex algebra supported on $L^\vee(-1)$).  Let $\langle, \rangle_W$ be the bilinear form on $W$ induced by the contragradient pairings between the component modules $V^{(i,j)}$ in ${}^gV$ and the contragradient pairings between the modules $\pi^{1,1}_\lambda$ and $\pi^{1,1}_{-\lambda}$ for the Heisenberg vertex algebra for $\bR^{1,1}$, normalized such that 
\[ \langle \unit, Y(e^{zL_1}(-z^2)^{-L_0} u, z^{-1})v \rangle_W = \langle Y(u,z)\unit, v \rangle_W \]
for all $u \in ({}^gV)^\lambda \otimes \pi^{1,1}_\lambda$ and $v \in ({}^gV)^{-\lambda} \otimes \pi^{1,1}_{-\lambda}$.  Then $\langle, \rangle_W$ is an invariant bilinear form on $W$ satisfying the following conditions:
\begin{enumerate}
\item For any $i,j \in L^\vee$ satisfying $i+j \neq 0$, we have $\langle u,v \rangle_W = 0$ for any $u \in ({}^gV)^i$ and $v \in ({}^gV)^j$.
\item If $L_0u = ru$ and $L_0v = sv$, and $r \neq s$, then $\langle u,v \rangle_W = 0$.
\item If $h \in \widetilde{C_{\Aut V}(g)}$, then $\langle hu,hv \rangle_W = \langle u,v \rangle_W$.
\end{enumerate}
\end{prop}
\begin{proof}
By Theorem 2.13 in \cite{S98} (which generalizes earlier results like \cite{L94} Theorem 3.1 to the setting we need), if $A$ is a conformal vertex algebra on which $L_1$ acts locally nilpotently, the dual space $\Hom(A_0/L_1 A_1, \bC)$ is isomorphic to the space of invariant bilinear forms on $A$, and an isomorphism is given by sending a map $f$ to the form $(u,v) = (f \circ \pi)(u_{-1}^*v)$, where
\begin{enumerate}
\item $\pi$ is the projection from $A$ to the weight zero space $A_0$.
\item $u_n^* = (-1)^i \sum_{j \geq 0} (\frac{L_1^j}{j!}u)_{2i-j-n-2}$ when $L_0u = iu$.
\end{enumerate}
We consider the unique functional $f$ on $W_0$ that sends the unit $\unit$ to $1$, and annihilates all weight zero $a \otimes b$ for $a \in \pi^{1,1}_\lambda$ and $b \in ({}^gV)^\lambda$ when $\lambda \in L^\vee(-1) \setminus \{ 0\}$.  Since $f$ annihilates all vectors in the image of $L_1$ on the weight one subspace, it corresponds to an invariant bilinear form $(,)_f$.  Since $({}^gV)^0 \otimes \pi^{1,1}_0$ has a unique invariant bilinear form up to normalization, the contragradient pairing coincides with $(,)_f$ on this subspace.  It is clear that both $\langle , \rangle_W$ and $(,)_f$ satisfy the homogeneity and $\widetilde{C_{\Aut V}(g)}$-invariance conditions listed in the statement of the proposition.  It remains to show that $(,)_f$ coincides with $\langle, \rangle_W$ in general, and for that it suffices to restrict our view to a pairing between $u \in ({}^gV)^\lambda \otimes \pi^{1,1}_\lambda$ and $v \in ({}^gV)^{-\lambda} \otimes \pi^{1,1}_{-\lambda}$.  In this case,
\[ \begin{aligned}
(u,v)_f &= (f \circ \pi)(u_{-1}^*v) \\
&= (\unit, u_{-1}^*v)_f \\
&= \langle \unit , u_{-1}^* v \rangle_W 
\end{aligned} \]
By our normalization, we have the equality
\[ \langle \unit, Y(e^{zL_1}(-z^2)^{-L_0} u, z^{-1})v \rangle_W = \langle Y(u,z)\unit, v \rangle_W \]
of elements of $\bC[z]$.  Isolating the constant term yields $\langle \unit , u_{-1}^* v \rangle_W = \langle u, v \rangle_W$.  We conclude that $(u,v)_f = \langle u, v \rangle_W$, so $\langle,\rangle_W$ is invariant.

\end{proof}

\subsection{Old covariant quantization}


In covariant quantization, one begins with a na\"ive space of states, that has redundancies due to internal symmetry.  In the case of internal Virasoro symmetry, to obtain the relevant space of states that couple, one considers the ``physical'' subspace of states that are invariant under the action of $vir^+$, and quotients by the ``spurious'' subspace, which is spanned by the image of the augmentation ideal in $U(vir^-)$ together with the radical of the induced Hermitian form.

An enhancement of the original covariant approach was used in \cite{B92} to construct the Monster Lie algebra as the space of physical states arising from the vertex algebra $V^\natural \otimes V_{I\!I_{1,1}(-1)}$.  We will use a variant of this method to construct our monstrous Lie algebras.

\begin{defn}
Let $\Rep^+_c(vir)$ denote the groupoid whose objects are positive energy Virasoro representations of central charge $c$ with Virasoro-invariant bilinear forms, and whose morphisms are Virasoro-equivariant isometries.  We define the functor $OCQ$, from $\Rep^+_c(vir)$ to the groupoid of vector spaces with bilinear forms, as $V \mapsto P^1_V/null^1_V$.
\end{defn}

\begin{lem} \label{lem:quantization-facts}
We list some relatively straightforward facts:
\begin{enumerate}
\item If $W$ is a conformal vertex algebra that is positive-energy, then $P^1/L_{-1}P^0$ has a Lie algebra structure induced by setting $[u,v] = u_0 v$, i.e., the coefficient of $z^{-1}$ in $Y(u,z)v$.  If a group $G$ acts on $W$ by conformal vertex algebra automorphisms, then $G$ acts on $P^1/L_{-1}P^0$ by Lie algebra automorphisms.
\item If $W$ is a positive-energy conformal vertex algebra equipped with an invariant bilinear form (in particular, with $L_i$ adjoint to $L_{-i}$ for each $i \in \bZ$), then the Lie algebra $P^1/L_{-1}P^0$ has an invariant inner product induced from $W$, and the radical of that inner product is an ideal.  In particular, $OCQ(W) = (P^1/L_{-1}P^0)/rad(,)$ is a Lie algebra equipped with a nondegenerate invariant inner product.  If a group $G$ acts on $W$ by conformal vertex algebra automorphisms, and the invariant bilinear form is $G$-invariant, then $OCQ(W)$ has a natural $G$ action by Lie algebra automorphisms, and the induced bilinear form is $G$-invariant.
\end{enumerate}
\end{lem}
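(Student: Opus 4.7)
The plan is to treat the two assertions separately, each reducing to standard vertex algebra identities combined with the Virasoro intertwining properties of $m_z$. Throughout, a conformal vertex algebra automorphism commutes with every $L(i)$ and with $m_z$, so it preserves every subspace and every operation constructed below; this makes the $G$-equivariance statements automatic and I will not dwell on them.

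For (1), the first step is to check that $u_0$ preserves $P^1$ and $L(-1)P^0$ whenever $u \in P^1$. Specializing the $L(i)$-compatibility of $m_z$ to a weight-one element $u$ yields $[L_n, u_0] = 0$ for all $n \geq 0$, and the translation axiom yields $[L(-1), u_0] = (L(-1)u)_0 = 0$. Hence the bracket $[u,v] := u_0 v$ is well defined on the quotient $P^1/L(-1)P^0$. Next I establish skew-symmetry via the antisymmetry identity of Lemma \ref{lem:vafacts}, which gives
\[
u_0 v \;=\; \sum_{i \geq 0} \frac{(-1)^{i+1}}{i!}\, L(-1)^i\, v_i u.
\]
The terms with $i \geq 2$ have $v_i u$ of $L(0)$-eigenvalue $1-i<0$, so vanish by positive-energy. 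The $i=1$ term is $L(-1)(v_1 u)$ with $v_1 u \in P^0$ (checked via $[L_n, v_1] = -v_{n+1}$ together with the same weight positivity), so $u_0 v + v_0 u \in L(-1)P^0$. Finally, the Jacobi identity is the specialization of the Borcherds commutator formula $[u_m, v_n] = \sum_{k \geq 0}\binom{m}{k}(u_k v)_{m+n-k}$ at $m=n=0$, namely $[u_0, v_0] = (u_0 v)_0$, read as an identity of operations on $P^1/L(-1)P^0$.

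For (2), I first show that the form descends to the quotient: Virasoro invariance gives $(L(-1)v, w) = (v, L(1)w) = 0$ for $v \in P^0$ and $w \in P^1$, so $L(-1)P^0$ lies in the radical of the restriction of $(\,,\,)$ to $P^1$. Next I specialize the defining identity of an invariant bilinear form to $u \in P^1$: since $L(1)u = 0$ and the conformal weight of $u$ is $1$, only the $j=0$ term in the adjoint formula survives, giving $u_0^* = -u_0$. This yields $([u,v], w) = -(v, [u,w])$, the Lie-invariance of the induced form. The radical of any invariant symmetric form on a Lie algebra is automatically an ideal (the one-line check $([x,r], y) = -(r, [x,y]) = 0$), so the quotient $Quant(W)$ inherits a Lie bracket and a nondegenerate invariant form. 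Any $G$-action preserving the form stabilizes the radical and descends to $Quant(W)$.

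The subtlest step is the truncation of the skew-symmetry series: one must force $v_i u = 0$ for $i \geq 2$ from the positive-energy hypothesis, which requires observing that in a positive-energy conformal vertex algebra the $L(0)$-spectrum is bounded below by $0$ (this is standard once one packages the vacuum vector and local nilpotence of $vir^+$ correctly, but is slightly stronger than the literal wording of the definition of positive-energy Virasoro representation in the excerpt). Every other step — Virasoro-compatibility of $u_0$, the Borcherds commutator formula, the adjoint computation, and the equivariance claims — is essentially bookkeeping.
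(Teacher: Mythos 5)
Your outline of (2), the Jacobi identity via the commutator formula, the well-definedness of the bracket on the quotient, and the $G$-equivariance statements are all fine, and in spirit your self-contained argument is a reasonable substitute for what the paper does (the paper simply cites Lemma 3.2 of \cite{J09} and the discussion following it, adding only the remark about automorphisms). However, there is a genuine gap at exactly the step you flag as subtlest. The claim that a positive-energy conformal vertex algebra has $L(0)$-spectrum bounded below by $0$ is false: the definition in the paper only asks that $L(0)$ be diagonalizable and that $vir^+$ act locally nilpotently, and --- crucially --- the algebras to which this lemma is applied violate your stronger claim. Indeed $W_g = V_g \otimes^A V_{n|h\pm}$ contains the Heisenberg modules $\pi^{1,1}_\lambda$ with $(\lambda,\lambda)<0$, hence vectors of arbitrarily negative conformal weight. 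Consequently you cannot conclude $v_iu=0$ for $i\ge 2$, nor that $v_1u$ is primary, from weight positivity: for two norm-two vectors $\alpha,\beta$ in a Lorentzian lattice with $(\alpha,\beta)\le -3$, the weight-one primaries $u=e^{\alpha}$, $v=e^{\beta}$ satisfy $v_2u=\pm e^{\alpha+\beta}\neq 0$, a vector of negative weight, and such configurations occur inside $W_g$. So your proof that $u_0v+v_0u\in L(-1)P^0$ breaks down precisely in the intended application; establishing that containment is the real content of part (1), and it is the part the paper outsources to \cite{J09}.

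The statement is still true, but it needs a different argument for skew-symmetry modulo $L(-1)P^0$. One route: the full correction term is $L(-1)x$ with $x=\sum_{i\ge 1}\frac{(-1)^{i+1}}{i!}L(-1)^{i-1}v_iu$ of weight zero, and $L(-1)x=u_0v+v_0u$ lies in $P^1$ because $u_0,v_0$ commute with all $L(n)$. From $L(n)L(-1)x=0$ for $n\ge 1$ one gets $L(-1)L(1)x=0$ and $L(-1)L(n)x=-(n+1)L(n-1)x$, so an easy induction shows each $L(n)x$, $n\ge 1$, lies in $\ker L(-1)$ in strictly negative weight; if one verifies that $\ker L(-1)$ in $W_g$ contains no nonzero vectors of negative weight (e.g.\ $\ker L(-1)=\bC\mathbf{1}$), then $x\in P^0$ and skew-symmetry holds modulo $L(-1)P^0$. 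Either supply such an argument (with the extra hypothesis made explicit) or fall back on the citation to \cite{J09}, as the paper does. The remaining steps of your proposal --- $[L(n),u_0]=0$ for $u\in P^1$, the observation $(L(-1)w)_0=0$ needed for well-definedness in the first slot, Jacobi from $[u_0,v_0]=(u_0v)_0$, the computation $u_0^*=-u_0$ and the descent of the form in (2), and equivariance --- are sound.
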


\begin{proof}
The first sentence of the first claim is given in section 3 of \cite{B92}.  The Lie algebra structure of $W/L_{-1}W$ is known from \cite{B86}, so it suffices to show that $a_0 b$ is primary if $a$ and $b$ are.  This follows from the formula $L_n (a_0 b) = a_0 L_n b + \sum_{m \geq -1} \binom{n+1}{m+1} (L_m a)_{n-m} b$ for $n \geq -1$ together with the $L_{-1}$-derivative rule.  The second sentence follows from the fact that automorphisms preserve the Virasoro action (hence the property of being primary, the grading, and the $L_{-1}$ operator), and the Lie bracket $(u,v) \mapsto u_0v$.

The first sentence of the second claim is given in the proof of Theorem 6.1 of \cite{B92}.  Invariance follows immediately from the adjunction formula $u_n^* = (-1)^i \sum_{j \geq 0} (\frac{L_1^j}{j!}u)_{2i-j-n-2}$ when $n=0$, $i=1$, and $L_1 u=0$.  The ideal property follows immediately from invariance.  The part about group actions follows from the same reason as in the first claim.
\end{proof}

The following proposition seems to capture the core of Lovelace's principle of oscillator cancellation.  It is sometimes given the name ``No Ghost Theorem'', because it immediately implies that quantization yields a positive-definite space of states.  That is, if we choose a unitary structure on $V$, then transport of structure along the component isomorphisms yields a unitary structure on the output.  Here, the word ``ghost'' means negative-norm state, whose absence is important in physics, since the existence of such ghosts obstructs reasonable formulations of causality.  In the next section, we will consider a different type of ghost, named after Faddeev-Popov, and they get their name from their violation of the spin-statistics theorem.

\begin{prop} \label{prop:no-ghost}
Let $V$ be a unitarizable Virasoro representation of central charge 24 equipped with a non-degenerate Virasoro-invariant bilinear form, let $\pi^{1,1}_\alpha$ be the Heisenberg module attached to the linear function $\alpha$ on $\bR^{1,1}$, and let $W = V \otimes \pi^{1,1}_\alpha$.  Then:
\begin{enumerate}
\item If $\alpha \neq 0$, then $OCQ(W)$ is isomorphic to the subspace $V_{1-(\alpha, \alpha)}$ of $V$ on which $L_0$ acts by $1-(\alpha,\alpha)$, where $(,)$ is the induced bilinear form on the dual space of $\bR^{1,1}$.
\item If $\alpha=0$, then $OCQ(W)$ is isomorphic to $V_0 \otimes (\pi^{1,1}_0)_1 \oplus V_1 \otimes (\pi^{1,1}_0)_0 \cong V_0 \oplus V_0 \oplus V_1$, where $V_1$ is the subspace of $V$ on which $L_0$ acts by identity.
\end{enumerate}
\end{prop}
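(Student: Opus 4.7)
The plan is to invoke the classical Goddard--Thorn no-ghost theorem \cite{GT72}, adapted to vertex algebras as in \cite{F85} and \cite{B92}. The essential input is that the total central charge of $W$ is $24+2 = 26$, the critical value at which longitudinal Heisenberg oscillators decouple from the physical spectrum. Throughout, I will identify $\alpha$ with the momentum $p$ appearing in the conclusion.

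Assume first that $p \neq 0$. I would choose a null vector $k \in \bR^{1,1}$ with $(k,p) \neq 0$---possible because $\bR^{1,1}$ contains two independent null rays and $p$ cannot be orthogonal to both---and a complementary null $k'$ with $(k,k')=1$. Then $\pi^{1,1}_p$ is a Fock space over $|p\rangle$ freely generated by the modes $\{k_{-n}, k'_{-n}\}_{n>0}$. For $v$ in the $L^V(0) = 1-(p,p)$ eigenspace of $V$, the vector $v \otimes |p\rangle$ has total weight $1$, and I would construct $\Phi(v) \in P^1_W$ by adding iterative correction terms built from $k_{-n}$-modes that force $L^W(n)\Phi(v)=0$ for every $n \geq 1$. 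At each order this is a triangular linear system whose leading coefficient is a nonzero scalar multiple of $(k,p)$, hence uniquely solvable. This is the transversal-space ``dressing'' procedure of Del Giudice--Di Vecchia--Fubini, specialized to the degenerate case of no true transverse directions. Composition with the projection $P^1_W \to Quant(W)$ yields the candidate isomorphism.

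Injectivity of the induced map modulo null vectors follows because $\Phi$ is an isometric embedding of inner product spaces, using nondegeneracy of the form on $V$. Surjectivity is the substance of the theorem and reduces to a character identity. Using Lemma \ref{lem:unitarizable} to decompose $V$ as a direct sum of Virasoro Verma modules $M_{h,24}$ (with $h \geq 0$), and the Heisenberg Fock character $\prod_{n \geq 1}(1-q^n)^{-2}$ for $\pi^{1,1}_p$, one must verify that at central charge $26$ the character of $P^1_W / null^1_W$ equals that of $V_{1-(p,p)}$: two Heisenberg oscillator towers cancel against two copies of a longitudinal Virasoro Verma tower. The main obstacle lies precisely in this cancellation, which genuinely requires $c_V = 24$ and depends on the structure of Virasoro Verma modules at $c=26$. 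One may either argue via an Euler--Poincar\'e computation on a Fock-space resolution of Virasoro modules (of Feigin--Fuchs type), or directly cite the no-ghost theorem in one of its several published formulations.

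For $p = 0$, the dressing map above is obstructed because the zero mode of $k$ annihilates the Heisenberg vacuum, and one proceeds by direct inspection at weight $1$. The physical subspace $P^1_W$ at weight $1$ in $V \otimes \pi^{1,1}_0$ decomposes as $V_0 \otimes (\pi^{1,1}_0)_1 \oplus V_1 \otimes (\pi^{1,1}_0)_0$, and since $\dim(\pi^{1,1}_0)_1 = 2$ and $\dim(\pi^{1,1}_0)_0 = 1$ this equals $V_0 \oplus V_0 \oplus V_1$ as a vector space. A short check using $L^\pi(-1)|0\rangle = 0$ and the unitarizability of $V$ confirms that $null^1_W$ contributes nothing to the quotient on this subspace, yielding the stated isomorphism.
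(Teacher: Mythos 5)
Your plan is correct and follows essentially the same route as the paper: for $\alpha=p\neq 0$ it rests on the classical Goddard--Thorn no-ghost theorem (the paper simply cites the detailed version in the appendix of \cite{J98}, expanding \cite{B92} Theorem 5.1, with the same observations that neither the lattice condition on $\alpha$ nor a vertex algebra structure on $V$ is used), and for $p=0$ it proceeds, as the paper does, by direct inspection of the weight-one subspace, noting that all weight-one vectors of $V\otimes\pi^{1,1}_0$ are primary, that $L(-1)$ kills $P^0$ by unitarizability at $c=24$, and that the induced form is nondegenerate. Your optional DDF-dressing/character-cancellation sketch is just an outline of the standard proof of the cited theorem, so there is no substantive divergence.
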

\begin{proof}
One may find a full proof of a slightly altered version of this claim in the appendix of \cite{J98}.  It is an expansion of the sketch of a proof for Theorem 5.1 in \cite{B92}, which is in turn a minor alteration of the argument in \cite{GT72}.  In order to pass from the proof in \cite{J98} to a proof of our claim, we need to make the following changes:
\begin{enumerate}
\item The vector $\alpha$ here is $r$ there, and we remove the restriction that it lie in the lattice $I\!I_{1,1}$, since that restriction is never used in the proof.
\item We remove the unnecessary assumption that $V$ is a vertex operator algebra, as the proof does not use any extra structure on the vector space $V$ except the Virasoro action.
\item The space $W$ here is $\mathcal{H}$ there.
\item Although it isn't necessary for this paper, we remove the assumption that the weight 0 subspace $V_0$ of $V$, defined as the kernel of $L_0$, is one dimensional.  One should therefore ignore the paragraph concerning the case $r =0$ at the end of the proof in \cite{J98}.  Instead, we note that all weight one elements of $V \otimes \pi^{1,1}_0$ are primary, so $P^1 \cong V_0 \oplus V_0 \oplus V_1$.  The fact that $L_{-1}$ annihilates $P^0$ follows from the classification of unitarizable Virasoro representations of central charge 24, given in Lemma \ref{lem:unitarizable}.  By our assumption on $V$ and the fact that $(\pi^{1,1}_0)_1$ has a natural real structure identified with $\bR^{1,1}$, the bilinear form on the quotient $P^1_W/L_{-1}P^0_W$ is nondegenerate, i.e., the radical vanishes.
\end{enumerate}
\end{proof}

\begin{defn} \label{defn:monstrous}
For each $g \in \bM$, we define:
\[ \fm_g = OCQ \circ AT_{I\!I_{1,1}(N)}({}^g_NV^\natural) \]
where $N$ is the level of $T_g(\tau)$.  This is a Lie algebra, by Lemma \ref{lem:quantization-facts}, and we call it the monstrous Lie algebra attached to $g$.
\end{defn}

\begin{rem}
By Proposition \ref{prop:adding-a-torus-commutes-with-unrolling}, $\fm_g$ is isomorphic to $OCQ \circ AT_L({}^gV^\natural)$ for any $L$ such that $L^\vee/L$ is the discriminant form for ${}^gV^\natural$.  Furthermore, we may replace $N$ with any multiple $rN$, and get an isomorphic object (although it is now a $I\!I_{1,1}(-1/rN)$-graded Lie algebra supported on the sublattice $I\!I_{1,1}(-1/N)$).
\end{rem}

\begin{prop} \label{prop:mg-properties}
Let $g \in \bM$.  Then the Lie algebra $\fm_g$ satisfies the following properties:
\begin{enumerate}
\item $\fm_g$ is graded by $I\!I_{1,1}(-1/N)$, and its degree (0,0) subspace is $\bC^2$, identified with the weight one space $(\pi^{1,1}_0)_1$ in the Heisenberg module.  Furthermore, the degree is given by the eigenvalue under the adjoint action of the degree $(0,0)$-subspace.
\item $\fm_g$ has a nondegenerate invariant bilinear form, and if $u$ and $v$ are homogeneous vectors of degree $a,b \in I\!I_{1,1}(-1/N)$ such that $a+b \neq 0$, then $u$ and $v$ are orthogonal.
\item $\fm_g$ has a canonical action of $\widetilde{C_{\bM}(g)}$ by homogeneous Lie algebra automorphisms that preserve the bilinear form.
\item For each nonzero $(a,b) \in I\!I_{1,1}(-1/N)$, the degree $(a,b)$ subspace is isomorphic as a $\widetilde{C_{\bM}(g)}$-module to the subspace of $V^\natural(g^a)$ on which $\tilde{g}$ acts by $e(\frac{b}{N})$ and $L_0$ acts by $1+\frac{ab}{N}$.  The dimension of this subspace is equal to the coefficient $c^g_{a,b}(ab/N)$ of the vector-valued modular function $F^g(\tau)$ given in Definition \ref{defn:Fg}.  In particular, we have a graded vector space isomorphism between $\fm_g$ and the Lie algebra $L_g$ defined in \cite{GM2} section 4.2 (written $W_g$ in Proposition 4.4 of that paper).
\end{enumerate}
\end{prop}
\begin{proof}
We prove the claims in the order they are stated.
\begin{enumerate}
\item The grading property follows from the fact that $AT_{I\!I_{1,1}(N)}({}^g_NV^\natural)$ is graded by $I\!I_{1,1}(-1/N)$.  Because $({}^gV^0)_1 =0$ and $({}^gV^0)_0 = \bC \unit$, Proposition \ref{prop:no-ghost} identifies the degree $(0,0)$ space with $V_0 \otimes (\pi^{1,1}_0)_1 = (\pi^{1,1}_0)_1$.  The eigenvalue assertion follows from the identification of the degree $(0,0)$ subalgebra, and the fact that for $u \in (\pi^{1,1}_0)_1$ and $v \in \pi^{1,1}_\alpha$, the $z^{-1}$-coefficient of $Y(u,z)v$ is $\alpha(u) v$.
\item The fact that the form is nondegenerate and invariant is given in Lemma \ref{lem:quantization-facts}.  The degree restriction follows from the corresponding property of $AT_{I\!I_{1,1}(N)}({}^g_NV^\natural)$, given in Proposition \ref{prop:duality-form}.
\item By Proposition \ref{prop:central-extension} (and Theorem \ref{thm:Monster-voa-is-well-behaved}), there is a canonical homogeneous action of $\widetilde{C_{\bM}(g)}$ on ${}^gV^\natural$ by homogeneous automorphisms, and this endows $AT_{I\!I_{1,1}(N)}({}^g_NV^\natural)$ with a corresponding action.  By the third claim of Lemma \ref{lem:quantization-facts}, homogeneous automorphisms of $AT_{I\!I_{1,1}(N)}({}^g_NV^\natural)$ are taken to homogeneous Lie algebra automorphisms of $\fm_g$.
\item The identification of $\widetilde{C_{\bM}(g)}$-modules follows from Proposition \ref{prop:no-ghost}.  
By the third listed claim of Theorem \ref{thm:properties-we-need}, the dimension of this space is equal to the Fourier coefficient $c^g_{a,b}(ab/N)$ of $F^g$.  To compare with the graded parts of $L_g$, we note that by \cite{GM2} Proposition 4.4, the $(a,b/N)$ root space of $L_g$ also has dimension equal to the Fourier coefficient $c^g_{a,b}(ab/N)$ of the vector-valued modular form $F^g$. 
\end{enumerate}
\end{proof}

\subsection{BRST cohomology}

In addition to covariant quantization, there is the ``modern'' method of Brecchi, Rouet, and Stora (with Tyutin's independent unpublished work often credited), where one extracts the relevant space of states as a cohomology group.  This has the advantage of being ``very functorial'', but the disadvantage of being rather complicated - one extracts the BRST current and the resulting differential by way of a super-variational principle.  In the string-theoretic context, BRST quantization was pioneered in \cite{KO83}.  One may find string-theoretic expositions of this method in chapter 3 of \cite{GSW87}, Chapter 4 of \cite{P98}, and section 7 of \cite{D97}, the last of which is intended for a more mathematical audience.

In the mathematical world, BRST quantization of Virasoro representations was introduced using semi-infinite cohomology in \cite{F84}, and generalized in \cite{FGZ86}.  Proofs can be found in \cite{LZ91}.  Borcherds's use of the no-ghost theorem for moonshine was reinterpreted in terms of BRST cohomology in \cite{LZ95}.  We will give a modified presentation that is applicable to the vertex algebra objects we have constructed in the previous section.


We introduce the ghost vertex superalgebra $V_{ghost}$ of central charge $-26$.  By boson-fermion correspondence, for any half-integer $\lambda$, there is an isomorphism between the bosonic vertex superalgebra attached to the odd positive definite lattice $\bZ$ with conformal vector shifted by $\lambda$, and the ghost $bc$-system of weight $\frac12 + \lambda$.  With the notation of \cite{FLM88}, the $bc$ system of weight $\frac12 + \lambda$ is generated by the ``anti-ghost'' field $b(z)$ that is identified with the bosonic lattice field $z^{-\lambda}X(1,z)$ of conformal weight $\frac12-\lambda$, and the ``ghost'' field $c(z)$ identified with $z^\lambda X(-1,z)$ of conformal weight $\frac12 + \lambda$.  The conformal vector is then given by $\frac12 h(-1)^2 + \lambda h(-2)$, and it has central charge $1-12\lambda^2$.  For $V_{ghost}$, we set $\lambda = \frac32$.

We briefly describe some cohomology functors and some key properties.  There is nothing original in this subsection except perhaps the method of presentation.
\begin{defn}
Given a Virasoro (``matter'') representation $V^m$ of central charge $D$ and energy-momentum tensor $T^m(z) = \sum_n L^m(n) z^{-n-2}$, we define the Virasoro module $C^{\infty/2 + *}(vir, V^m) = V^m \otimes V_{ghost}$, which has central charge $D-26$.  The old and new BRST currents are defined to be $j^{BRST,old} = cT^m + \frac12 :cT^{bc}:$ and $j^{BRST,new} = j^{BRST,old} + \frac{3}{2}\partial^2 c$.  The $U(1)_{bc}$ current is $j^{bc} = :bc:$.  We consider the following operations on $C^{\infty/2 + *}(vir, V^m)$:
\begin{enumerate}
\item The old and new BRST operators
\[ \begin{aligned}
Q^{old} &= j^{BRST,old}_0 = \frac{1}{2\pi i} \oint dz j^{BRST,old}(z) \\
&= \sum_n L^m(n) c_{-n} + \sum_{m<n} (m-n) :b_{m+n} c_{-m}c_{-n}: \\
Q^{new} &= j^{BRST,new}_0 = \frac{1}{2\pi i} \oint dz j^{BRST,new}(z) \\
&= \sum_{r \in \bZ} (L_r^{(m)} - \delta_{r,0})c_{-r} - \frac12 \sum_{r,s \in \bZ} (r-s) :c_{-r} c_{-s} b_{r+s}:
\end{aligned} \]
\item The old and new ghost number operators
\[ \begin{aligned}
U^{old} &= j^{bc}_0 = \frac12 (c_0 b_0 - b_0 c_0) + \sum_{n=1}^\infty (c_{-n} b_n - b_{-n} c_n) \\
U^{new} &= U^{old} + \frac{3}{2} = 1 + c_0 b_0 + \sum_{n=1}^\infty (c_{-n}b_n - b_{-n} c_n)
\end{aligned} \]
An eigenvector of $U$ with eigenvalue $k$ is said to have ghost number $k$.
\end{enumerate}
When $Q^2 = 0$, we call $C^{\infty/2 + *}(vir, V^m)$ the ``BRST complex''.  We also define the ``relative subcomplex'' (\cite{Z89}, section 4) to be the subspace of $V \otimes V_{ghost}$ annihilated by $b_0$.
\end{defn}

\begin{rem}
We have ``old'' and ``new'' versions of $Q$ and $U$, to reflect the use in the literature.  Papers from the 1980s and early 1990s (e.g., \cite{KO83}, \cite{FGZ86}, \cite{GSW87}, \cite{LZ91}) use the old versions, while the sources starting from the mid-1990s (e.g., \cite{P98}, \cite{D97}, \cite{LZ95}) add an additional $\frac{3}{2}\partial^2 c$ to the BRST current and a constant $3/2$ to the ghost number.  This change impacts the output of various functors in the following ways:
\begin{enumerate}
\item The addition of the derivative term makes the new BRST current a ``$(1,0)$-form as a quantum operator''.  Concretely, this means the OPE with the total stress-energy tensor $T^m + T^{bc}$ has the form $T(z)j^{BRST}(0) \sim \frac{1}{z^2}j^{BRST}(0) + \frac{1}{z} \partial j^{BRST}(0)$.  In the absence of the derivative, one has a pole of order 3 as the leading term.  The extra simplicity makes many calculations easier, especially when quantizing in higher genus settings.  However, it is irrelevant for the case at hand, since we just want to compute the cohomology spaces.
\item When we work out the space of physical states, we find that it lies in ghost degree $-1/2$ in the old convention and $1$ in the new convention.  In particular, the space of interest was written $H^{-1/2}_{BRST}$ in older work, but is written $H^1_{BRST}$ now.  The older convention has the convenient property that Poincar\'e duality gives a symmetry around zero (see \cite{FGZ86} Theorem 1.6 and the remark before Theorem 2.5), while the newer convention is more manageable when considering Gerstenhaber or BV structures on cohomology (\cite{LZ93}).
\end{enumerate}

The calculations relevant to us are unchanged, because a total derivative integrates to zero when evaluating charges.  That is, the old and new BRST cohomology functors are naturally isomorphic.  To avoid unnecessary confusion, we will write $H^1_{BRST}$ even when referring to results that use the old convention.




\end{rem}

\begin{lem}
Let $V^m$ be a positive-energy representation of Virasoro (called the ``matter representation'' in the physics literature).  We have the following operator calculations in $V^m \otimes V_{ghost}$:
\begin{enumerate}
\item $Q^2 = 0$ if and only if $D = 26$.  That is, $C^{\infty/2+*}(V^m)$ is a complex with differential $Q$ if and only if $V^m$ has central charge 26.
\item $[U,Q] = Q$, so $Q$ increases ghost number by one.
\item $[U,L_0] = 0$, so ghost number and $L_0$-eigenvalue provide a bigrading.
\end{enumerate}
Now, suppose $V^m$ has central charge $D=26$, so $Q$ gives $V^m \otimes V^{ghost}$ the structure of a complex of vector spaces.
\begin{enumerate}
\item $[Q,L_0]=0$, so the BRST complex and cohomology are graded by $L_0$-eigenvalues.
\item $[Q,b_0]= L_0$.  This implies the relative subcomplex is in fact a subcomplex, and all BRST-closed states with nonzero $L_0$-eigenvalue are BRST-exact.  In particular, the cohomology is supported in weight zero.
\end{enumerate}
\end{lem}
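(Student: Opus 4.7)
The strategy is to verify each identity by direct mode computation using the standard commutation relations of the $bc$-ghost system. Recall that $b_m$ and $c_n$ are odd operators satisfying $\{b_m,c_n\} = \delta_{m+n,0}$ and $\{b_m,b_n\} = \{c_m,c_n\}=0$, and that the Virasoro modes $L^{bc}(n) = \sum_m (n-m):b_{m+n}c_{-m}: - \delta_{n,0}$ assemble the ghost stress tensor of central charge $-26$. Throughout I shall exploit the contour representation $Q = \tfrac{1}{2\pi i}\oint dz\, j^{BRST}(z)$ with $j^{BRST} = cT^m + \tfrac12 :c T^{bc}:$, and translate OPEs into (anti)commutators by taking residues.

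For the three assertions that precede the $D=26$ hypothesis: the current $j^{bc} = :bc:$ assigns $c$ the charge $+1$ and $b$ the charge $-1$, so because each summand of $j^{BRST}$ carries net ghost number one, the residue of $j^{bc}(z)j^{BRST}(0)$ is $j^{BRST}(0)$, yielding $[U,Q] = Q$. The equation $[U, L(0)] = 0$ is the statement that $L^m(0) + L^{bc}(0)$ preserves the ghost grading, which is immediate since every term of $L^{bc}(n)$ contains exactly one $b$ and one $c$. For $Q^2 = \tfrac12 \{Q, Q\}$, one extracts the residue of $j^{BRST}(z)\, j^{BRST}(0)$; the known calculation (cf.\ Chapter 3 of \cite{GSW87}, or \cite{KO83}) shows this OPE produces obstructive $c\partial^3 c$ terms with coefficient proportional to $D - 26$, which vanish precisely when $D = 26$.

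Now assume $D = 26$. Then $j^{BRST}$ differs from a conformal primary of weight one by a total derivative, so $Q$ commutes with $L(0) = L^m(0) + L^{bc}(0)$, establishing $[Q, L(0)] = 0$. The crucial identity $\{Q, b_0\} = L(0)$ is a direct mode computation: using $\{c_{-n}, b_0\} = \delta_{n,0}$, the anticommutator of $Q$ with $b_0$ collects one matter-Virasoro contribution $L^m(0)$ from the $cT^m$ term and one ghost-Virasoro contribution $L^{bc}(0)$ from $:cT^{bc}:$, whose sum is $L(0)$. The weight-zero concentration of cohomology then follows by the standard contracting-homotopy argument: if $Q\psi = 0$ and $L(0)\psi = r\psi$ with $r \neq 0$, then
\[ \psi = r^{-1} L(0)\psi = r^{-1}\{Q, b_0\}\psi = Q(r^{-1} b_0 \psi) \]
is $Q$-exact. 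The same identity shows that $[Q, b_0]$ maps $\ker b_0$ into itself (since $L(0)$ does), and a direct check using $\{b_0,b_0\}=0$ confirms that $Q$ itself preserves $\ker b_0$, making the relative subspace an honest subcomplex.

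The main technical obstacle is neither conceptual nor structural but rather the bookkeeping of normal-ordering and sign conventions when expanding composite-operator OPEs such as $:cT^{bc}:(z)\,:cT^{bc}:(0)$ and $:cT^{bc}:(z)\,b(0)$: the sign of the anomaly in $j^{BRST}(z)j^{BRST}(0)$ and the precise coefficient $1/12$ in front of the $(D-26)$ anomaly are both sensitive to these conventions. I would short-circuit this bookkeeping by invoking the explicit formulas in \cite{KO83} and \cite{LZ91}, which establish both nilpotency at $D=26$ and the fundamental relation $\{Q, b_0\} = L(0)$ in conventions compatible with ours, and then simply transcribe the results.
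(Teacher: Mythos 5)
Your proposal is essentially the paper's own proof: the paper disposes of all five identities with the phrase ``standard OPE calculations'' and a pointer to section 4 of \cite{Z89}, and you carry out the same mode-level checks (ghost-number counting for $[U,Q]=Q$ and $[U,L(0)]=0$, the $(D-26)$ anomaly for $Q^2$, the zero-mode computation for $[Q,b_0]=L(0)$) while deferring the normal-ordering bookkeeping to \cite{KO83} and \cite{LZ91}. The contracting-homotopy argument $\psi = Q(r^{-1}b_0\psi)$ is exactly the standard route to the weight-zero concentration of cohomology, so the substance matches.

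One step is wrong as written: the claim that ``a direct check using $\{b_0,b_0\}=0$ confirms that $Q$ itself preserves $\ker b_0$.'' It does not. For $\psi$ with $b_0\psi=0$ one has $b_0 Q\psi = [Q,b_0]\psi - Qb_0\psi = L(0)\psi$, which is nonzero whenever $\psi$ has nonzero weight; the relation $\{b_0,b_0\}=0$ plays no role here. The relative subcomplex in the sense of Zuckerman and Frenkel--Garland--Zuckerman is cut out by $b_0$ \emph{and} $L(0)$ together (equivalently, by $b_0$ inside the weight-zero subspace), and on that subspace the two identities $[Q,b_0]=L(0)$ and $[Q,L(0)]=0$ do give closure under $Q$. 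Since you have already shown the cohomology is supported in weight zero, nothing downstream of this sentence is damaged, but the justification of ``the relative subspace is an honest subcomplex'' should be restated in this corrected form.
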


\begin{proof}
These claims follow from standard OPE calculations.  One may find them enumerated in section 4 of \cite{Z89}.  


\end{proof}

\begin{lem}
If $V$ is a conformal vertex algebra of central charge 26, then $H^1_{BRST}(V)$ is a Lie algebra, and morphisms of conformal vertex algebras are taken to maps of Lie algebras under this functor.  In particular, if a group $G$ acts on $V$ by conformal vertex algebra automorphisms, then $G$ naturally acts on $H^1_{BRST}(V)$ by Lie algebra automorphisms.
\end{lem}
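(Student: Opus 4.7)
The plan is to construct the bracket on $H^1_{BRST}(V)$ directly from the $0$-th fusion product on the differential graded conformal vertex superalgebra $(V \otimes V_{ghost}, Q)$, following the Lian--Zuckerman strategy. Since $V_{ghost}$ has central charge $-26$, the tensor product $V \otimes V_{ghost}$ is a conformal vertex superalgebra of total central charge $0$. The BRST current $j^{BRST}$ is a field of conformal weight one, so its zero mode $Q = j^{BRST}_0$ commutes with $L(-1)$ and acts as a graded derivation of every $n$-th product $u_{(n)} v$. Combined with $Q^2 = 0$ (which is where $c = 26$ enters), this makes $V \otimes V_{ghost}$ a dg conformal vertex superalgebra.

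Next I would define the bracket. The preceding lemma identifies $H^1_{BRST}(V)$ with the ghost-number-$1$ cohomology of the relative subcomplex $(\ker b_0, Q)$, which is concentrated in $L(0) = 0$. For $Q$-closed representatives $u, v$ in $\ker b_0 \cap \ker L(0)$, set $[[u], [v]] := [u_{(0)} v]$. The derivation property gives $Q(u_{(0)} v) = (Qu)_{(0)} v \pm u_{(0)}(Qv) = 0$, so $u_{(0)} v$ is closed; if $u = Qu'$, then $u_{(0)} v = Q(u'_{(0)} v) \pm u'_{(0)}(Qv) = Q(u'_{(0)} v)$, which is exact, and symmetrically for $v$. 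One must also verify that $u_{(0)} v$ remains in the relative subcomplex, which comes from expanding the commutator $[b_0, u_{(0)}]$ as a finite sum of modes of elements in $\ker b_0$ via the standard vertex algebra commutator formula, together with $[L(0), u_{(0)}] = 0$.

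For the Lie axioms I appeal to the general principle, already implicit in Lemma \ref{lem:quantization-facts}, that on any conformal vertex superalgebra the $0$-product is skew-symmetric and satisfies Jacobi modulo terms in the image of $L(-1)$. The new input is the Koszul identity $L(-1) = \{Q, b_{-1}\}$, which falls out immediately from the standard relation $[Q, b_n] = L(n)$ (a direct OPE computation between $j^{BRST}$ and $b$). Since $\{b_0, b_{-1}\} = 0$, the operator $b_{-1}$ preserves the relative subcomplex, so on $Q$-closed elements $L(-1)$ acts as $Q \circ b_{-1}$, producing only exact classes. Hence every $L(-1)$-error in skew-symmetry and Jacobi vanishes in $H^1_{BRST}(V)$.

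For functoriality, a morphism $f : V \to V'$ of conformal vertex algebras of central charge $26$ extends to $f \otimes \mathrm{id} : V \otimes V_{ghost} \to V' \otimes V_{ghost}$, a morphism of conformal vertex superalgebras that commutes with $Q$, $b_0$, and $L(0)$, since each of these is built solely from the (preserved) conformal structure of $V$ and the (untouched) ghost sector. This makes $f$ a chain map of relative complexes, and the naturality of $u_{(0)} v$ ensures the induced map on cohomology is a Lie algebra homomorphism. The $G$-action corollary follows by applying this to each element $g \in G$. The main technical obstacle I anticipate is the bookkeeping with super-signs in the derivation identities and the precise verification of $[Q, b_n] = L(n)$; these are standard but require care to match the conventions fixed in the preceding definition of $j^{BRST}$ and $\omega_{ghost}$.
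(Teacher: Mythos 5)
There is a genuine gap, and it is located exactly where you define the bracket. You set $[[u],[v]] := [u_{(0)}v]$, but the zeroth product of two ghost-number-one classes has ghost number two: ghost charge is the zero mode of the weight-one current $:bc:$, hence additive under all of the products $u_{(n)}v$, so $u_{(0)}v$ defines at best a map $H^1\otimes H^1 \to H^2$, not a binary operation on $H^1_{BRST}(V)$. Concretely, for physical representatives $u = c\otimes x$, $v = c\otimes y$ with $x,y$ weight-one primary vectors of $V$, one computes $u_{(0)}v = \pm\langle x,y\rangle\, (\partial c)\, c\otimes \mathbf{1}$ (the $c(z)c(w)$ OPE vanishes to first order, so the only surviving singular term pairs the second-order pole of $x(z)y(w)$ with $(\partial c)c$), which carries two ghost excitations and is unrelated to the expected bracket $c\otimes x_{(0)}y$ of the Lie algebra of physical states. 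The correct operation is the Lian--Zuckerman bracket $\{u,v\} = (-1)^{|u|}\,(b_{-1}u)_{(0)}v$: the $b_{-1}$ insertion is part of the definition of the bracket, not merely (as in your argument) a homotopy $L(-1)=\{Q,b_{-1}\}$ used to dispose of $L(-1)$-error terms. Once $b_{-1}$ sits inside the bracket, your appeal to the elementary fact that the $0$-product is skew and Jacobi ``modulo $L(-1)$'' no longer applies; skew-symmetry and the Jacobi identity for $\{\cdot,\cdot\}$ hold only up to $Q$-exact terms produced by explicit chain homotopies, and establishing those homotopies is the actual content of Theorem 2.2 of \cite{LZ93}, which is what the paper's proof cites.

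The remaining parts of your proposal are sound and consistent with the paper's treatment: $Q^2=0$ at total central charge $0$, the preservation of the relative subcomplex, and the functoriality argument (a morphism $f\colon V\to V'$ extends to $f\otimes\mathrm{id}$ on $V\otimes V_{ghost}$, commutes with $Q$, $b_0$, $L(0)$, and with the ghost modes entering the bracket formula, hence induces a Lie algebra map on cohomology; the $G$-action statement follows). So the repair is local: replace your bracket by the Lian--Zuckerman bracket and either reproduce their homotopy identities or, as the paper does, invoke \cite{LZ93} Theorem 2.2 and observe that its defining formulas are natural in $V$.
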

\begin{proof}
For the Lie algebra claim, see \cite{LZ93} Theorem 2.2.  While the authors do not make the explicit claim that morphisms of conformal vertex algebras are taken to Lie algebra homomorphisms, the statement follows from the explicit formulas in their proof together with the faithfulness of the ``underlying vector space'' functors.  The claim about group actions then follows from the functoriality.
\end{proof}

We now have the BRST version of oscillator cancellation.

\begin{prop}
Let $\alpha \in \bR^{1,1}$, and let $\pi^{1,1}_\alpha$ be the Heisenberg representation attached to $\alpha$.  If $V$ is a unitarizable Virasoro representation of central charge $24$, then $H^1_{BRST}(V \otimes \pi^{1,1}_\alpha)$ is isomorphic to the subspace $V_{1-\alpha^2}$ of $V$ on which $L_0$ acts with eigenvalue $1-\alpha^2$ (if $\alpha \neq 0$), and $V_0 \oplus V_0 \oplus V_1$ (if $\alpha = 0$).  In particular, any positive definite Hermitian form on $V$ induces a positive definite Hermitian form on $H^1_{BRST}(V \otimes \pi^{1,1}_\alpha)$.
\end{prop}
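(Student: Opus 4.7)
The plan is to reduce the computation of $H^1_{BRST}(V \otimes \pi^{1,1}_\alpha)$ to the covariant-quantization statement already established in Proposition \ref{prop:no-ghost}, via the standard comparison between BRST cohomology and physical states modulo null states.

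First I would set up the BRST complex. Since $V$ has central charge $24$ and $\pi^{1,1}_\alpha$ has central charge $2$ (from the Heisenberg conformal vector $\frac{1}{2}\sum_\mu a^\mu_{-1} b^\mu_{-1} 1$), the matter representation $V \otimes \pi^{1,1}_\alpha$ has central charge $26$, so $Q^2 = 0$ and $C^{\infty/2+*}(vir, V \otimes \pi^{1,1}_\alpha)$ is a genuine complex. The identity $\{Q, b_0\} = L(0)^{tot}$ (sum of matter and ghost Virasoro zero modes) supplies a contracting homotopy off the $L(0)^{tot} = 0$ subspace, so the cohomology is supported in total weight zero; restricting to the relative subcomplex $\ker b_0 \cap \ker L(0)^{tot}$ captures the cohomology up to a standard doubling of ghost degrees from the $c_0$-direction.

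Next I would invoke the Lian--Zuckerman comparison theorem (\cite{LZ95}, building on \cite{FGZ86} and \cite{LZ91}): for a matter representation satisfying appropriate positivity conditions, there is a canonical isomorphism $H^1_{BRST}(W) \cong Quant(W) = P^1_W / null^1_W$. The hypotheses are satisfied here because $V$ is unitarizable by assumption and $\pi^{1,1}_\alpha$ carries its natural Virasoro-invariant bilinear form that is positive-definite on the transverse spacelike subspace of $\bR^{1,1}$ determined by $\alpha$ (or by an arbitrary splitting when $\alpha = 0$). The tensor product inherits an invariant form whose radical coincides with the null-state subspace. Granted the comparison, Proposition \ref{prop:no-ghost} immediately yields $Quant(V \otimes \pi^{1,1}_\alpha) \cong V_{1-\alpha^2}$ for $\alpha \ne 0$, and $\cong V_0 \otimes (\pi^{1,1}_0)_1 \oplus V_1 \otimes (\pi^{1,1}_0)_0 \cong \bC^2 \oplus V_1$ for $\alpha = 0$.

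The main obstacle will be verifying the comparison isomorphism in exactly the generality required, since published statements sometimes impose stronger hypotheses on the matter sector (such as finite-dimensional graded pieces or a full vertex algebra structure rather than just a Virasoro module). A self-contained fallback is the DDF construction: the Del Giudice--Di Vecchia--Fubini operators are primary weight-one vertex operators built from transverse Heisenberg modes that commute with $Q$ modulo exact terms, and they produce an explicit spanning set of cohomology representatives indexed by an oscillator Fock space built on $V_{1-\alpha^2}$. A dimension count via the Kac determinant formula, together with the decomposition of $V$ into irreducible Verma modules furnished by Lemma \ref{lem:unitarizable}, then pins down the isomorphism on each $L(0)$-graded piece, reducing the proposition to a finite-dimensional identification that matches the covariant count of Proposition \ref{prop:no-ghost} term-by-term.
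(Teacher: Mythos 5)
Your argument is correct in substance, but it runs in the opposite direction from the paper. The paper computes $H^1_{BRST}(V\otimes\pi^{1,1}_\alpha)$ directly: for $\alpha\neq 0$ it cites Zuckerman's computation of relative BRST cohomology (\cite{Z89}, Theorem 4.9), identifying the absolute cohomology with $H^0_{rel}$ and reading off its dimension as a coefficient of the shifted character of $V$ (with a small erratum in that reference corrected via \cite{LZ91}); for $\alpha=0$ it writes down explicit cocycle representatives following \cite{FGZ86}, Proposition 2.9. The comparison with the covariant functor $Quant$ is proved only afterwards, as a separate lemma citing \cite{D97} and \cite{LZ95}, so in the paper the two quantizations are computed independently and the natural isomorphism is a consistency statement rather than an ingredient. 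You instead deduce the BRST computation from Proposition \ref{prop:no-ghost} via that comparison; this is not circular, since the comparison rests on \cite{LZ95} (Lemma 6.7, stated for irreducible matter and extended by the complete reducibility supplied by Lemma \ref{lem:unitarizable}) and not on the statement being proved, but it makes your proof hinge on verifying that theorem's hypotheses in exactly the generality needed, which is the caveat you yourself flag. What the paper's route buys is independence of the two computations (so the later comparison lemma is a genuine check); what yours buys is economy, since Proposition \ref{prop:no-ghost} then does all the work. Two small cautions: the positivity you invoke (a ``positive-definite form on the transverse spacelike subspace of $\bR^{1,1}$'') is not the operative hypothesis --- what matters is unitarizability of the central charge $24$ factor, the form on $\pi^{1,1}_\alpha$ being indefinite --- and the DDF fallback would require real work both at $\alpha=0$ and for matter that is merely a unitarizable Virasoro module rather than a lattice-type vertex algebra, so it should not be leaned on as stated.
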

\begin{proof}
The case $\alpha \neq 0$ is covered in \cite{Z89} Theorem 4.9 under the following translation of notation:
\begin{enumerate}
\item $\pi^{1,1}_\alpha$ is written $\mathcal{H}(D,p)$, with $D=2$ and $p = \alpha$.
\item $V$ is written $\mathcal{K}$.
\item The BRST differential on $\mathcal{H}(D,p) \otimes \mathcal{K} \otimes V^{gh}$ is written $Q_{mod}$.
\item $H_{BRST}^{u+3/2}(V \otimes \pi^{1,1}_\alpha)$ is written $H^u_{Q_{mod}}(p)$ for the same values of $u$, and $p=\alpha$.
\item The character $Tr(q^{L_0}|V)$ of $\mathcal{K}$ is written $\chi(\mathcal{K},q) = tr q^{J_0}$.
\end{enumerate}
The conclusion of the theorem is that $H^{-1/2}_{Q_{mod}}(p) \cong H^0_{rel}(p, Q_{mod})$, and that $\dim H^0_{rel}(p, Q_{mod})$ is the coefficient of $q^{-(p,p)/2}$ in the shifted character $Tr(q^{L_0-1}|V)$.  (The statement given in the paper contains an erroneous division by $\Delta$ rather than $q$ - this can be checked by comparison with the free boson case in Theorem 3.4, or using Corollary 2.27 in \cite{LZ91}, which is proved in more detail.)  This completes the case $\alpha \neq 0$.

The case $\alpha=0$ follows from an explicit computation of cocycle representatives, following the example of Proposition 2.9 in \cite{FGZ86}: One obtains a basis of BRST cohomology given by $\{ x \otimes L'_{-1} \wedge L'_{-2} \wedge \cdots \}$ as $x$ ranges over a basis of $(\pi^{1,1}_0 \otimes V)_1 = ((\pi^{1,1}_0)_1 \otimes V_0) \oplus ((\pi^{1,1}_0)_0 \otimes V_1) \cong V_0 \oplus V_0 \oplus V_1$.

As we remarked shortly before the statement of Proposition \ref{prop:no-ghost}, the positive definite Hermitian form on $H^1_{BRST}(V \otimes \pi^{1,1}_\alpha)$ is inherited from $V$ by the oscillator cancellation.

\end{proof}

We conclude with a natural isomorphism between the two quantization functors.  Let $\unit \in V_{ghost}$ be the ghost vacuum vector - it is the unique (up to scaling) vector that is annihilated by $\{b_n\}_{n \geq 0}$ and $\{c_n \}_{n > 0}$, and it has ghost number one.

\begin{lem}
The map $W \hookrightarrow W \otimes V_{ghost}$ given by $w \mapsto w \otimes \unit$ induces a natural isomorphism $OCQ \Rightarrow H^1_{BRST}$, as the input $W$ ranges over $\Rep^+_{26}(vir)$.
\end{lem}
\begin{proof}
This is proved at the end of section 4.4 in \cite{P98}.  Partial arguments can be found in section 3.2.1 of \cite{GSW87} and section 4.6 of \cite{D97} (which is in lecture 7 part F on the IAS web site).

When $W$ is restricted to be a direct sum of representations of the form $V \otimes \pi^{r,1}_\lambda$ for $V$ unitarizable of central charge $25-r$ and $r \leq 25$, one may instead appeal to the no-ghost theorem together with complete reducibility in the $V$ part.  This is the argument of Lemma 6.7 in \cite{LZ95}.
\end{proof}

\subsection{Borcherds-Kac-Moody property}

We discuss a class of infinite dimensional Lie algebras defined by Borcherds in \cite{B88}, where he called them ``generalized Kac-Moody algebras''.  Borcherds-Kac-Moody algebras are quite similar to Kac-Moody Lie algebras, since they are defined by generators and relations encoded by a generalized Cartan matrix, and they admit representation-theoretic data like Weyl character formulas.  They differ in the sense that the conditions defining a suitable generalized Cartan matrix are weaker, and most importantly, simple roots are allowed to be imaginary.  We will show that the Monstrous Lie algebras defined earlier in this section are Borcherds-Kac-Moody algebras, using a minor variant of the characterization from \cite{B95}, Theorem 1.  

The definitions of Borcherds-Kac-Moody algebras (or generalized Kac-Moody algebras) are not uniform in the literature.  We will use the definition given in \cite{J96} and \cite{J04}, because that is the definition that is both general enough to account for the Lie algebras we need and for which there is a published proof of a homological description of the denominator formula.  We needed the homological description in order to produce twisted denominator identities in \cite{GM2}.

\begin{defn}
If $I$ is a countable index set, a matrix $A = (a_{i,j})_{i,j \in I}$ of real numbers is called a generalized Cartan matrix if it satisfies the following conditions:
\begin{enumerate}
\item $A$ is symmetrizable, i.e., there is a diagonal matrix $Q$ whose diagonal entries $Q_{i,i} = q_i$ are positive real numbers, such that $QA$ is symmetric.
\item $a_{i,j} < 0$ if $i \neq j$.
\item If $a_{i,i} > 0$, then $a_{i,i} = 2$ and for all $j \in I$, $a_{i,j} \in \bZ$.
\end{enumerate}
Given a generalized Cartan matrix $(a_{i,j})_{i,j \in I}$, its universal Borcherds-Kac-Moody algebra $\fg(A)$ is the Lie algebra with generators $\{ h_i, e_i, f_i\}_{i,j \in I}$, and the following relations:
\begin{enumerate}
\item $\mathfrak{sl}_2$ relations: $[h_i, e_k] = a_{i,k} e_k$, $[h_i, f_k] = -a_{i,k} f_k$, $[e_i,f_j] = \delta_{i,j} h_i$.
\item Serre relations: If $a_{i,i} > 0$, then $\ad(e_i)^{1-2a_{i,j}}(e_j) = \ad(f_i)^{1-2a_{i,j}}(f_j) = 0$.
\item Orthogonality: If $a_{i,j} = 0$, then $[e_i, e_j] = [f_i,f_j] = 0$.
\end{enumerate}
A Borcherds-Kac-Moody algebra is a Lie algebra of the form $(\fg(A)/C).D$, where $C$ is a central ideal, and $D$ is a commutative Lie algebra of outer derivations.
\end{defn}

From these starting data, one can build up a theory that closely mirrors that of finite dimensional semisimple Lie algebras, but with some complications.  The general theory is substantially developed with proofs in \cite{J96}.  

There are two main differences between this definition and others:
\begin{enumerate}
\item Other definitions may start with the symmetrized matrix $QA$ instead of $A$.  This means one often sees the condition that if $a_{i,i} > 0$, then for all $j \in I$, $2a_{i,j}/a_{i,i} \in \bZ$.
\item Other definitions may have central generators $h_{i,j}$ for $i \neq j$, with the relation $[e_i,f_j] = \delta_{i,j} h_i$ replaced with $[e_i,f_j] = h_{i,j}$.  As we will see in the next lemma, these necessarily vanish in the cases of interest to us.  That is, if we were to use the more general definitions, these generators would end up in the central ideal $C$.
\end{enumerate}

Most of the Borcherds-Kac-Moody algebras that have naturally appeared in mathematical practice have infinitely many simple roots, but a finite dimensional Cartan subalgebra, i.e., $I$ is infinite, but one chooses $C$ to have finite codimension in the span of $\{ h_i\}_{i \in I}$.  We give a method to recognize most of the interesting cases:

\begin{lem} \label{lem:gkm-conditions}
Any complex Lie algebra $\fg$ satisfying the following conditions is Borcherds-Kac-Moody:
\begin{enumerate}
\item $\fg$ admits a nonsingular invariant symmetric bilinear form, i.e., $(x,y) = (y,x)$, $(x,[y,z]) = ([x,y],z)$ for all $x,y,z \in \fg$, and if $(x,y) = 0$ for all $y \in \fg$, then $x=0$.
\item $\fg$ has a self-centralizing subalgebra $H$, called a Cartan subalgebra, such that $\fg$ is the sum of eigenspaces under the action of $H$, and the nonzero eigenvalues, called roots, have finite multiplicity.
\item $H$ has a totally real structure $H_{\bR} \subset H$ (i.e., such that $H = H_{\bR} \oplus \sqrt{-1}H_{\bR}$), such that the restriction of the bilinear form is real-valued, and all roots are elements of the dual $H_{\bR}^\vee$.
\item There exists an element $h \in H_{\bR}$, such that:
\begin{enumerate}
\item the centralizer of $h$ in $\fg$ is $H$,
\item for any $M \in \bR$, there exist at most finitely many roots $\alpha$ such that $|\alpha(h)| < M$.
\end{enumerate}
This vector $h$ is called a regular element.  If $\alpha(h) < 0$ then we say that $\alpha$ is negative, and if $\alpha(h) > 0$ then we say that $\alpha$ is positive.
\item The norms of roots under the inner product $(,)$ are bounded above.
\item Any two roots of non-positive norm that are both positive or both negative have inner product at most zero, and if the inner product is zero, then their root spaces commute.
\end{enumerate}
\end{lem}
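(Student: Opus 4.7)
The plan is to reduce this lemma directly to Borcherds's characterization of generalized Kac-Moody algebras, namely Theorem 1 of \cite{B95}. The six hypotheses listed here are designed to mirror Borcherds's hypotheses almost verbatim, so the proof should be a matter of careful translation rather than an independent derivation of Serre-type relations.

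First I would verify that the invariant form restricts to a nonsingular form on $H$, since Borcherds explicitly requires this. Invariance gives $(\alpha+\beta)(h)(x,y) = 0$ for all $h \in H$, $x \in \fg_\alpha$, $y \in \fg_\beta$, so root spaces corresponding to distinct nonzero-summed roots are orthogonal; nondegeneracy of $(-,-)$ on $\fg$ then forces each pairing $\fg_\alpha \times \fg_{-\alpha} \to \bC$ to be perfect, in particular for $\alpha = 0$. Next I would match conditions (1)--(6) against Borcherds's list item by item: (1) is his invariance hypothesis, (2) is his Cartan-with-root-decomposition hypothesis, (3) is the technical real-structure hypothesis that lets us speak of positive and negative roots (via evaluation against $h \in H_\bR$), (4) is his regular-element hypothesis ensuring root-space discreteness, (5) is his boundedness hypothesis on norms of roots, and (6) is his ``roots of nonpositive norm on the same side are orthogonal, with commuting root spaces when equality holds'' hypothesis. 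The latter is the key input that allows Borcherds to organize the roots into a Borcherds-Cartan matrix and extract simple roots, including imaginary simple roots of nonpositive norm.

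The main obstacle, in some sense, is not in the present proof but is bundled inside the cited theorem: the construction of simple roots, the derivation of the generalized Serre relations, and the verification that $\fg$ is presented by Chevalley generators and these relations. For us that substantive content is packaged as \cite{B95} Theorem 1, so the present proof reduces to a bookkeeping check that no condition of that theorem has been silently omitted. The only subtle point to confirm is that ``self-centralizing'' in condition (2) together with the root decomposition forces $H$ to be abelian (since any $h_1, h_2 \in H$ satisfy $[h_1,h_2] \in H$ acting as zero under $\mathrm{ad}\, h_1$, and in characteristic zero this forces $[h_1,h_2] = 0$); once that is noted, the hypotheses match Borcherds's exactly and the conclusion follows by direct citation.
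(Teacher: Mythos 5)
Your overall route is the same as the paper's: this lemma is proved by citing Theorem 1 of \cite{B95}, and the substantive content (extraction of simple roots, generalized Serre relations, reconstruction from a Borcherds--Cartan matrix) is indeed packaged inside that citation. The side remarks you make (nondegeneracy of the pairing $\fg_\alpha \times \fg_{-\alpha} \to \bC$, $H$ abelian from self-centralization) are fine.

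However, your concluding claim that ``the hypotheses match Borcherds's exactly and the conclusion follows by direct citation'' is precisely where the proof needs an argument, and you have skipped it. Borcherds's characterization in \cite{B95} is stated for Lie algebras defined over the \emph{real} numbers; here $\fg$ is a complex Lie algebra, and condition (3) --- a totally real structure $H_{\bR} \subset H$ with real-valued form and real-valued roots --- is not one of Borcherds's hypotheses but the \emph{surrogate} for his real-definition assumption. So the bookkeeping check you describe does not close: a literal item-by-item match fails at the field of definition. What is needed (and what the paper supplies) is the observation that Borcherds's proof still works in this setting, because his recursive reconstruction of $\fg$ by adjoining generators uses only the real structure on $H$ (to order the roots, define positivity via the regular element $h \in H_{\bR}$, and run the induction), and never a real form of $\fg$ itself. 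Without some justification of this kind, the ``direct citation'' is not valid as stated; with it, your argument becomes the paper's.
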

\begin{proof}
This is essentially theorem 1 in \cite{B95}, but we remove the hypothesis that $\fg$ is defined over the real numbers.  Borcherds's proof still works, because the reconstruction of $\fg$ by recursively adding simple roots following increasing values of $\alpha(h)$ only uses the real structure on $H$, not on $\fg$.  To match Borcherds's definition with ours, we would need to allow the generators $h_{i,j}$, but Borcherds shows in the course of his proof that $h_{i,j} = [e_i, f_j] = 0$ when $i \neq j$.  Thus, the proof shows that we obtain a Borcherds-Kac-Moody algebra in our restricted sense.
\end{proof}

Recall that an element $g \in \bM$ is Fricke if and only if the McKay-Thompson series $T_g$ is invariant under the level $N$ Fricke involution $\tau \mapsto \frac{-1}{N\tau}$ for some $N$.

\begin{prop} \label{prop:mg-is-bkm}
For each Fricke element $g \in \bM$, the Lie algebra $\fm_g$ from Definition \ref{defn:monstrous} is a Borcherds-Kac-Moody Lie algebra.
\end{prop}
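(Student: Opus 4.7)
The strategy is to verify the six conditions of Lemma \ref{lem:gkm-conditions} using the structural data collected in Proposition \ref{prop:mg-properties}. Take $H$ to be the degree-$(0,0)$ subspace of $\fm_g$, which Proposition \ref{prop:mg-properties}(1) identifies with the two-dimensional Heisenberg weight-one space $(\pi^{1,1}_0)_1$; its real form is the canonical $\bR^{1,1}$, supplying the real structure of condition (3). The invariant nondegenerate bilinear form required by condition (1) is given in Proposition \ref{prop:mg-properties}(3). Since Proposition \ref{prop:mg-properties}(2) presents the $L_{n|h\pm}^\vee$-grading as the eigenvalue decomposition under $\mathrm{ad}(H)$, the centralizer of $H$ equals $H$ itself, and root multiplicities are finite because each $L(0)$-eigenspace in a twisted $V^\natural$-module is finite-dimensional (a standard consequence of $C_2$-cofiniteness); this handles condition (2).

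For condition (4), pick a regular element $h$ in the interior of an open timelike cone of $H_\bR$ with irrational slope, so that $\alpha(h) \neq 0$ for every nonzero root $\alpha$. The strip $\{\alpha : |\alpha(h)| < M\}$ in $H_\bR^\vee$ is bounded transversally but extends in a spacelike direction along which $ab \to -\infty$. However, by Proposition \ref{prop:mg-properties}(2) the root space at $(a,b)$ is nonzero only if $ab - 1$ lies in the $L(0)$-spectrum of $V^\natural(g^a)$, and by Lemma \ref{lem:twisted-modules-unitarizable} these spectra are bounded below by a constant depending on the conjugacy class of $g^a$. Since $g$ has finite order, there are only finitely many such classes, giving a uniform lower bound $ab \geq c$. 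Intersecting this region with the strip produces a compact set containing only finitely many lattice points. The same inequality gives $(a,b)^2 = -2ab/nh \leq -2c/nh$ for every root, yielding condition (5).

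The step I expect to require the most care is condition (6). Imaginary roots are precisely those with $ab \geq 0$, so positive imaginary roots (relative to $h$) lie in the first quadrant $a,b \geq 0$. Their pairing $-(a_1 b_2 + a_2 b_1)/nh$ is manifestly nonpositive, with equality forcing $a_1 b_2 = a_2 b_1 = 0$ and hence both roots onto a common null axis ($b_1 = b_2 = 0$ or $a_1 = a_2 = 0$). In that boundary case one must show the corresponding root spaces commute. Since $\fm_g = Quant(W_g)$, the Lie bracket is given by the residue $[u,v] = u_0 v$ of the vertex algebra product (Lemma \ref{lem:quantization-facts}). For weight-one primary lifts of roots with Heisenberg charges $\alpha,\beta$ on a common null ray, the OPE exponent $B(\alpha,\beta)$ vanishes, so $Y(u,z)v$ has no negative powers of $z$ and $u_0 v = 0$. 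The delicate technical point is to verify that in the graded tensor product $W_g = V_g \otimes^A V_{n|h\pm}$, the fractional $z$-exponents of type $\rho_{n|h\pm}$ coming from $V_g$ precisely cancel those of type $\bar\rho_{n|h\pm}$ from $V_{n|h\pm}$, so that the total leading OPE exponent is governed by the Heisenberg factor alone. Once this is in hand, the commutation follows and all six conditions of Lemma \ref{lem:gkm-conditions} are verified.
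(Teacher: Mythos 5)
Your verification of conditions (1)--(5) of Lemma \ref{lem:gkm-conditions} follows essentially the same route as the paper (Cartan subalgebra $=$ the degree-$(0,0)$ space, form from Lemma \ref{lem:duality-form}/Proposition \ref{prop:mg-properties}, boundedness of root norms from the lower bound on the $L(0)$-spectrum of the twisted modules), and your reduction of condition (6) to pairs of roots on a common null ray is also the paper's first step. The gap is in your justification that such root spaces commute. You argue that since the Heisenberg charges $\alpha,\beta$ satisfy $(\alpha,\beta)=0$, the operator $Y(u,z)v$ has no negative powers of $z$, so $u_0v=0$. This is false: the vanishing of $(\alpha,\beta)$ only controls the leading exponent of the \emph{lattice/Heisenberg} factor of $W_g=V_g\otimes^A V_{n|h\pm}$. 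The other tensor factor is an intertwining operator $m_z(u'\otimes v')$ with $u',v'$ weight-one vectors of twisted $V^\natural$-modules, and its exponents are bounded below only by $\mathrm{wt}(u')+\mathrm{wt}(v')-h_{\min}$ of the target module; in particular it generically has a $z^{-1}$ term valued in the weight-one subspace of $V^\natural(g^{a_1+a_2})$, which survives quantization. (The $z^{-2}$ term, paired with $\lambda_1(-1)e^{\lambda_1+\lambda_2}$ from the lattice side, is proportional to $L(-1)$ of a weight-zero physical state and does die in $Quant$, but the $z^{-1}$ term does not die for formal reasons.) The ``delicate point'' you flag --- cancellation of the fractional exponents of type $\rho_{n|h\pm}$ against $\bar\rho_{n|h\pm}$ --- only yields \emph{integrality} of the exponents, which is already built into the construction of $W_g$; it says nothing about nonnegativity, so resolving it would not close the gap.

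The paper closes exactly this gap with two nontrivial inputs and a case split you do not make. For Fricke $g$ it observes there are simply no positive imaginary roots of norm zero, so there is nothing to check. For non-Fricke $g$ (which always commutes with a 2B involution), Lemma \ref{lem:monster-twist-to-lattice} expresses every weight-one vector of a twisted $V^\natural$-module as a sum of weight-one vectors of twisted $V_{Leech}$-modules, and Proposition \ref{prop:lattice-Lie-commutes} (the imported lattice result) asserts that the $z^{-1}$-coefficients of intertwining operators on weight-one vectors of twisted lattice modules vanish; only then does $u_0v=0$ follow. Without an argument of this kind (or some substitute for the vanishing of that $z^{-1}$-coefficient), condition (6) --- the crux of the Borcherds--Kac--Moody property here --- remains unproved in your proposal. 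A smaller point: your conclusion in condition (5) that every root has strictly negative norm cannot be right as stated (Fricke elements produce real roots, e.g.\ in degree $(1,-1)$); this is a sign-convention slip rather than a structural error, but it should be repaired.
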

\begin{proof}
It suffices to check that the conditions in Lemma \ref{lem:gkm-conditions} hold.
\begin{enumerate}
\item The bilinear form is induced from the contragradient form, as shown in Proposition \ref{prop:duality-form}.
\item We define $H$ to be the $(0,0)$-graded piece of $\fm_g$.  By the first claim in Proposition \ref{prop:mg-properties}, this is isomorphic to $\bC^2$.  Furthermore, the left multiplication by $H$ is given by the Heisenberg action on the tensor product vertex algebra, and their eigenvalues are given by degree.  The fact that anything not in $H$ has nonzero eigenvalue implies the self-centralizing condition is satisfied.
\item $H$ is naturally identified with $I\!I_{1,1}(-1/N) \otimes_{\bZ} \bC$, which has a natural real structure $H_{\bR} = I\!I_{1,1}(-1/N) \otimes_{\bZ} \bR$.  The inner product on $H_{\bR}$ is real valued, and the roots, viewed as elements of the lattice, lie in $H_{\bR}^\vee$.
\item By identifying $H_{\bR}$ with $I\!I_{1,1}(-1/N) \otimes_{\bZ} \bR$, we let $h$ be any negative norm element whose inner product with any root is nonzero.  A standard choice is $(1,2)$, under the identification $I\!I_{1,1}(-1/N)  \cong \bZ \oplus \frac{1}{N}\bZ \subset \bR^{1,1}$. 
\item If we write the norm of a root at $r \in I\!I_{1,1}(-1/N)$ as $r^2$, then the fourth claim of Proposition \ref{prop:mg-properties} implies the root space at $r$ is identified with a subspace of ${}^g_NV^\natural$ whose vectors have $L_0$-eigenvalue equal to $1-r^2$.  The $L_0$-eigenvalues in ${}^g_NV^\natural$ are bounded below by zero, so the norms of roots are bounded above by $1$. 
\item Because $H$ has a real Lorentzian structure, we can identify the real span of the root space with $\bR^{1,1}$, and any pair of imaginary positive (resp. negative) roots lies in a single quadrant, where their inner products are bounded above by zero.  Two such roots are orthogonal if and only if they are positive multiples of the same norm zero vector.  We claim that because $g$ is Fricke, there are no positive roots of norm zero.  Such a root would have the form $(a,0)$ or $(0,b/N)$ in $I\!I_{1,1}(-1/N)$ for $a,b > 0$, and by the last claim in Proposition \ref{prop:mg-properties}, the multiplicities of those root spaces are given by the coefficients $c^g_{a,0}(0)$ and $c^g_{0,b}(0)$ of $F^g$.  By Corollary 3.25 of \cite{GM2}, these coefficients are in turn the exponents attached to $(1-p^a q^0)$ and $(1-p^0 q^{b/N})$ in the product expansion
\[ T_g(\sigma) - T_g(-1/\tau) = p^{-1} \prod_{m>0,n \in \frac{1}{N}\bZ} (1-p^m q^n)^{c^g_{m,Nn}(mn)}. \]
Because $g$ is Fricke, we may rewrite the left side as $T_g(\sigma) - T_g(\tau/N)$, and we find that the product expansion has the form $p^{-1}(1-pq^{-1/N})$ times a product for which $m$ and $n$ are strictly positive.  Thus, the coefficients vanish, and there are no roots of norm zero.
\end{enumerate}
\end{proof}

When $g$ is non-Fricke, the last condition is somewhat more complicated to verify due to the presence of norm zero simple roots.  Since the non-Fricke case is not needed in this paper, we postpone it to future work.

The following lemma will be useful in the next section, when we consider concrete quantitative questions.  It is well-known to experts, but we couldn't find a statement in the literature that was both precise and concise.

\begin{lem} \label{lem:bkm-unique-roots}
Let $\fg_1$ and $\fg_2$ be Borcherds-Kac-Moody Lie algebras with finite dimensional Cartan subalgebras $\fh_1, \fh_2$.  Given an inner-product-preserving isomorphism $f: \fh_1 \to \fh_2$, there exists an extension to a Lie algebra isomorphism $\fg_1 \to \fg_2$ if and only if the root multiplicities are identical under the isometry $f$.
\end{lem}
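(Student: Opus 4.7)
The ``only if'' direction is immediate: any extension of $f$ to a Lie algebra isomorphism $\fg_1 \to \fg_2$ must restrict to a linear isomorphism between each pair of root spaces indexed by corresponding roots under $f$, so all multiplicities match.

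For the ``if'' direction, my plan is to exploit the fact that a Borcherds-Kac-Moody algebra is determined up to isomorphism by its Cartan subalgebra (with inner product) together with the set of simple roots (positive roots not expressible as a sum of two positive roots with respect to the regular element) and their multiplicities. First I would transport the regular element $h_1 \in \fh_1$ through $f$ to a regular element in $\fh_2$; after possibly perturbing within the open set of regular elements I may assume $f(h_1)$ is regular for $\fg_2$ and induces the same positive/negative decomposition as any given choice of regular element there. The equality of root multiplicities under $f$ then yields a multiplicity-preserving bijection of root systems, and since the property ``positive root not decomposable as a sum of two positive roots'' is intrinsic to this data, simple roots correspond bijectively under $f$ with matching multiplicities. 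By Lemma \ref{lem:multiplicity-one} the real simple roots contribute multiplicity one apiece, so all the ambiguity is concentrated in imaginary simple roots.

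Next I would construct the isomorphism via generators and relations. For each simple root $\alpha$ of $\fg_1$, choose a basis $\{e_{\alpha,1},\ldots,e_{\alpha,n_\alpha}\}$ of its root space and use the nondegenerate invariant form to pick a dual basis $\{e_{-\alpha,j}\}$ in $\fg_1^{-\alpha}$; do the same for $\fg_2$. Invariance of the form forces $[e_{\alpha,i}, e_{-\alpha,j}]$ to equal a specific element of $\fh_1$ expressible purely in terms of $f(\alpha)$ and the inner product, so these brackets match under $f$. I would then define the candidate homomorphism on the generating set $\fh_1 \cup \{e_{\pm\alpha,i}\}$ by applying $f$ on $\fh_1$ and sending $e_{\pm\alpha,i}$ to the corresponding generator of $\fg_2$. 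The defining relations of a BKM algebra (Serre-type relations, commutation of orthogonal imaginary simple root spaces as in item~6 of Lemma \ref{lem:gkm-conditions}, and $[\fh,e_{\pm\alpha,i}] = \pm\alpha(\cdot)e_{\pm\alpha,i}$) depend only on the matrix of inner products of simple roots and their multiplicities, so they are matched under $f$, and the map extends to a Lie algebra homomorphism $\phi: \fg_1 \to \fg_2$.

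The main obstacle is arguing that $\phi$ is bijective; by symmetry of the construction it suffices to show surjectivity, or equivalently that every BKM algebra is generated by its Cartan subalgebra together with its simple root spaces. This is the content of the reconstruction step in the proof of Theorem~1 of \cite{B95}: the subalgebra generated by $\fh$ and the simple root spaces is all of $\fg$, because every positive root space is obtained by iterated brackets of simple root generators (with the imaginary simple roots themselves included as generators, which is why the statement requires matching multiplicities on imaginary simples, not just on all roots). Once surjectivity is in hand, $\phi$ is a Lie algebra isomorphism extending $f$, completing the proof.
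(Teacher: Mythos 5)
The ``only if'' direction and your final generators-and-relations step are sound and run parallel to the paper's argument, but there is a genuine gap where you extract the simple roots. Your claimed intrinsic characterization --- a simple root is a positive root not expressible as a sum of two positive roots --- is false for Borcherds-Kac-Moody algebras: imaginary simple roots can be decomposable. In the fake monster Lie algebra \cite{B90}, for instance, the simple roots include every positive multiple $n\rho$ of the Weyl vector (each with multiplicity $24$), and for $n \geq 2$ one has $n\rho = \rho + (n-1)\rho$, a sum of two positive roots. Moreover, even where the set of simple roots is correctly located, the multiplicity of an imaginary simple root \emph{as a simple root} cannot in general be read off by inspection: if an imaginary simple root $\gamma$ coincides in $\fh^*$ with a sum of other simple roots, then $\fg_\gamma$ contains brackets of lower simple root vectors in addition to the new generators, so the number of generators needed in degree $\gamma$ is the root multiplicity minus a quantity that must be determined recursively. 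This recursive bookkeeping is precisely what the paper obtains from the Weyl-Kac-Borcherds denominator formula: the product side is fixed by the root multiplicities, and the identity then determines the simple roots together with their simple multiplicities. With that substitution your argument closes up and in fact becomes the paper's proof: both algebras are presented by generators and relations from the same generalized Cartan matrix (this is where \cite{B95} enters, both for the presentation and for generation by $\fh$ and the simple root spaces), and matching simple root vectors through $f$ extends the isometry to a Lie algebra isomorphism.

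A smaller remark: no perturbation of the regular element is needed. Since the hypothesis already identifies the root systems under $f$, the image $f(h_1)$ pairs nontrivially with every root of $\fg_2$, hence is itself regular and induces the matching positive system directly.
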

\begin{proof}
It is clear that an isomorphism of Lie algebras that restricts to an isometry of Cartan subalgebras preserves root multiplicities.

Now, assume $\fg_1$ and $\fg_2$ have identical root multiplicities under the isometry $\fh_1 \to \fh_2$.  It is well-known that the Weyl-Kac-Borcherds denominator formula may be used to uniquely determine the simple roots of a Borcherds-Kac-Moody algebra from the root multiplicities.  Indeed, the set of values of $\alpha(h)$ as $\alpha$ ranges over positive roots is a discrete subset of the positive reals (in particular, well-ordered), and this allows us to recursively match the truncated evaluations of both sides of the Weyl-Kac-Borcherds denominator formula by adding simple roots as necessary.  Indeed, this is essentially the same reconstruction process as the one sees in the proof of Lemma \ref{lem:gkm-conditions}.  Thus, under a choice of matching regular elements, $\fg_1$ and $\fg_2$ have the same simple roots, i.e., the simple roots that are identified under the isometry $\fh_1 \to \fh_2$ have equal multiplicity.  Abstractly, this implies that both $\fg_1$ and $\fg_2$ are isomorphic to the Borcherds-Kac-Moody Lie algebra given by generators and relations from the same generalized Cartan matrix.  More concretely, the simple root spaces are isomorphic vector spaces, and any choice of isomorphism between simple root spaces that are identified through the isometry $\fh_1 \to \fh_2$ extends uniquely to a Lie algebra isomorphism.
\end{proof}

\begin{rem}
Theorem 7.2 of \cite{B92} is essentially an application of the previous lemma to identify the Monster Lie algebra $M = \fm_1$ defined by applying the no-ghost theorem to $V^\natural$ with the abstract Lie algebra $N = L_1$ defined by a specification of simple roots.
\end{rem}

\section{Comparison theorems}

\subsection{Lie algebra comparison} \label{sect:comparisons}

In \cite{GM2} section 4, we defined for each $g \in \bM$ a Borcherds-Kac-Moody Lie algebra $L_g$ (written $W_g$ in Proposition 4.4 of that paper), whose Weyl-Kac-Borcherds denominator identity is an infinite product expansion of $T_g(\sigma) - T_g(-1/\tau)$.  We now show that the quantization functor yields an isomorphic Lie algebra.

\begin{prop} \label{prop:lg-isom-mg}
Let $g \in \bM$ be a Fricke element.  Then there exists a homogeneous Lie algebra isomorphism $\fm_g \to L_g$.
\end{prop}
\begin{proof}
By Proposition \ref{prop:mg-is-bkm}, $\fm_g$ is a Borcherds-Kac-Moody Lie algebra whose Cartan subalgebra has a real Lorentzian structure.  By Theorem 4.2 of \cite{GM2}, $L_g$ is a Borcherds-Kac-Moody Lie algebra whose Cartan subalgebra has a real Lorentzian structure.  By Lemma \ref{lem:bkm-unique-roots}, if we are given two Borcherds-Kac-Moody Lie algebras and an inner-product-preserving isomorphism between their Cartan subalgebras, this isomorphism can be extended to an isomorphism of the full Lie algebras if and only if the root multiplicities are equal.

If $T_g(\tau)$ has level $N$, then the root lattices of both Lie algebras are identified with $I\!I_{1,1}(-1/N)$, so it suffices to compare dimensions of the homogeneous spaces of nonzero degree.  The dimensions are equal by the last claim in Proposition \ref{prop:mg-properties}, which gives a $I\!I_{1,1}(-1/N)$-graded vector space isomorphism $\fm_g \to L_g$.  Thus, we have equality of dimensions of root spaces, and hence an isomorphism of Lie algebras.
\end{proof}

\begin{cor} \label{cor:hypothesis-cg}
For any Fricke $g \in \bM$, hypothesis $C_g$ from section 4.4 of \cite{GM2} is satisfied, i.e., there is an action of $\widetilde{C_{\bM}(g)}$ on $L_g$ by Lie algebra automorphisms, such that the action on the $(a,b/N)$ root space coincides with the action on the subspace of $V^\natural(g^a)$ on which $\tilde{g}$ acts by $e(b/N)$ and $L_0$ acts by $1+ab/N$.
\end{cor}
\begin{proof}
If we fix a Lie algebra isomorphism $\fm_g \to L_g$, then the action of $\widetilde{C_{\bM}(g)}$ on $L_g$ is given by \textit{transport de structure}.  The isomorphism between $\widetilde{C_{\bM}(g)}$-module structures for the root spaces and the subspaces of twisted modules then follows from the corresponding fact for $\fm_g$, which holds by the fourth listed claim of Proposition \ref{prop:mg-properties}.
\end{proof}

\subsection{Main theorem}

The following theorem is the target of this series of papers.

\begin{thm} \label{thm:hauptmodul-for-Fricke}
If $g$ is a Fricke element, then for any $h \in C_{\bM}(g)$, there exists a discrete group $\Gamma_{g,h} \subset PSL_2(\bR)$ containing a congruence subgroup $\Gamma(K)$ for some $K > 0$ (hence commensurable with $SL_2(\bZ)$), such that for any lift $\tilde{h}$ of $h$ to a linear transformation on $V(g)$, the power series $Tr(\tilde{h} q^{L_0-1} |V^\natural(g))$ is the $q$-expansion of a modular function on $\cH$ that is invariant under $\Gamma_{g,h}$, and generates the function field of the quotient $\cH/\Gamma_{g,h}$.
\end{thm}
\begin{proof}
By Corollary \ref{cor:hypothesis-cg}, hypothesis $C_g$ holds, and in particular the positive subalgebra $E_g$ of $L_g$ is Fricke-compatible for the data $(\widetilde{C_{\bM}(g)}, \tilde{g}, \{ V^{i,j}_k = V^\natural(g^i)^{\tilde{g}=e(j/N),L_0-1=k} \})$ in the sense of \cite{GM2} section 4.2.  That is:
\begin{enumerate}
\item $E_g$ is graded by $\bZ_{>0} \times \frac{1}{N} \bZ$, and decomposes as a sum $\bigoplus_{n>0, m \in \frac{1}{N} \bZ} E_{i,j} p^i q^j$ of finite dimensional homogeneous components, where $p$ and $q$ denote degree shifts by $(0,1)$ and $(1,0)$.
\item $E_g$ admits an action of $\widetilde{C_{\bM}(g)}$ by homogeneous Lie algebra automorphisms, such that $E_{i,j} \cong V^{i,j}_{1+ij}$ as $\widetilde{C_{\bM}(g)}$-modules.
\item The homology of $E_g$ is given by
\begin{itemize}
\item $H_0(E) = \bC$
\item $H_1(E) = \bigoplus_{n \in \frac{1}{N}\bZ} V^{1,n}_{1+n} pq^n$
\item $H_2(E) = p \bigoplus_{m=1}^\infty V^{1,-1/N}_{1-1/N} \otimes V^{m,1/N}_{1+m/N} p^m$
\item $H_i(E) = 0$ for $i>2$.
\end{itemize}
\item $E_{1,-1/N} \cong V^{1,-1/N}_{1-1/N}$ is one dimensional.
\end{enumerate}

By Proposition 6.3 in \cite{GM1}, if $E_g$ is Fricke-compatible for $(\widetilde{C_{\bM}(g)}, \tilde{g}, \{ V^{i,j}_k\})$, then for any $\tilde{h} \in \widetilde{C_{\bM}(g)}$, the power series $Tr(\tilde{h} q^{L_0-1}|V^\natural(g))$ is the $q$-expansion of a Hauptmodul whose invariance group contains a principal congruence subgroup.
\end{proof}

By combining our main theorem with previously known results, we obtain a positive resolution of Norton's conjecture.

\begin{thm} (Generalized Moonshine for twisted modules of $V^\natural$) \label{thm:main}
Any rule that assigns
\begin{itemize}
\item to each $g \in \bM$ the irreducible $g$-twisted $V^\natural$-module $V^\natural(g)$, with its canonical projective $C_{\bM}(g)$-action, and
\item to each commuting pair $(g,h)$ the function $Z(g,h;\tau) = \Tr(\tilde{h} q^{L_0-1}|V^\natural(g))$ for some lift $\tilde{h}$ of $h$ to a linear transformation on $V^\natural(g)$
\end{itemize}
satisfies the following conditions:
\begin{enumerate}
\item The formal series defining $Z(g,h;\tau)$ is the $q$-expansion of a holomorphic function on the upper half-plane $\fH$.
\item The function $(g,h) \mapsto Z(g,h;\tau)$ is invariant under simultaneous conjugation on the pair $(g,h)$ in $\bM$, up to rescaling.
\item $Z(g,h;\tau)$ is either a constant or a Hauptmodul.
\item For any $\left(\begin{smallmatrix} a & b \\ c & d \end{smallmatrix} \right) \in SL_2(\bZ)$ and any commuting pair $(g,h)$ in $\bM$, $Z(g,h,\frac{a\tau+b}{c\tau+d})$ is proportional to $Z(g^a h^c, g^b h^d,\tau)$.
\item $Z(g,h;\tau)$ is proportional to $J(\tau)$ if and only if $g=h=1$.
\end{enumerate}
\end{thm}
\begin{proof}
The first claim is well-known: As a statement about normal convergence of power series, it follows from Theorem 8.1 of \cite{DLM00}.

The second claim is also well-known, and follows by general nonsense from the Dong-Li-Mason proof of existence and uniqueness for irreducible twisted $V^\natural$-modules: For any $k \in \bM$, $k$-conjugation on twisted modules induces an equivalence from the category of $g$-twisted $V^\natural$-modules to the category of $kgk^{-1}$-twisted $V^\natural$-modules, taking the projective $C_{\bM}(g)$-action on $V^\natural(g)$ to the projective $C_{\bM}(kgk^{-1})$-action on $V^\natural(kgk^{-1})$ by conjugation.  This preserves characters up to rescaling.

The third claim is relatively straightforward in light of Theorem \ref{thm:hauptmodul-for-Fricke} combined with the $SL_2(\bZ)$ claim.  If $g^a h^c$ is Fricke for some $a,c$ satisfying $(a,c) = 1$, then $Z(g^a h^c, g^b h^d,\tau)$ is a Hauptmodul for all $b,d$ such that $\left(\begin{smallmatrix} a & b \\ c & d \end{smallmatrix} \right) \in SL_2(\bZ)$.  The $SL_2(\bZ)$ claim then implies $Z(g,h,\tau)$ is proportional to a M\"obius transform of $Z(g^a h^c, g^b h^d,\tau)$, hence is a Hauptmodul.  On the other hand, if all $g^a h^c$ for $(a,c) = 1$ are non-Fricke, then the twisted modules $V^\natural(g^a h^c)$ have $L_0$-spectrum bounded below by 1, so $Z(g^a h^c, g^b h^d,\tau)$ is regular at infinity for all $\left(\begin{smallmatrix} a & b \\ c & d \end{smallmatrix} \right) \in SL_2(\bZ)$.  Then the $SL_2(\bZ)$ claim implies $Z(g,h,\tau)$ is regular at all cusps, so the maximum modulus principle implies it is a constant function.

As we remarked in the introduction, the $SL_2(\bZ)$ claim follows immediately from combining Theorem 10.1 in \cite{DLM00} (which reduces the question to a $g$-rationality condition) with Corollary 5.26 of \cite{CM16} (which asserts $g$-rationality).

The fifth claim also follows from \cite{DLM00}.  In particular, the positivity of the $L_0$-spectrum of $V^\natural(g)$ for $g \neq 1$ (as given in Theorem \ref{thm:Monster-voa-is-well-behaved}) eliminates all nontrivial twistings.  For the trivial twisting, the fact that the McKay-Thompson series $T_h(\tau) = Z(1,h,\tau)$ is equal to $J$ if and only if $h=1$ follows from the faithfulness of the monster action on $V^\natural$ as shown in \cite{FLM88}.
\end{proof}

\end{document}